\UseRawInputEncoding
\documentclass[12pt, reqno]{amsart}
\usepackage[margin=1in]{geometry}
\usepackage{amssymb,latexsym,amsmath,amscd,amsfonts}
\usepackage{latexsym}
\usepackage[mathscr]{eucal}
\usepackage{bm}
\usepackage{mathptmx}
\usepackage{amssymb}
\usepackage{amsthm}
\usepackage{dcolumn}
\usepackage[all]{xy}
\usepackage{enumitem}
\usepackage[utf8]{inputenc}

\def \qed {\hfill \vrule height6pt width 6pt depth 0pt}
\def\textmatrix#1&#2\\#3&#4\\{\bigl({#1 \atop #3}\ {#2 \atop #4}\bigr)}
\def\dispmatrix#1&#2\\#3&#4\\{\left({#1 \atop #3}\ {#2 \atop #4}\right)}
\newcommand{\beg}{\begin{equation}}
	\newcommand{\eeg}{\end{equation}}
\newcommand{\ben}{\begin{eqnarray*}}
	\newcommand{\een}{\end{eqnarray*}}

\newcommand{\C}{\mathbb C}

\newcommand{\N}{\mathbb N}
\newcommand{\T}{\mathbb T}

\newcommand{\E}{\mathbb E}

\newcommand{\Pe}{\mathbb P}
\newcommand{\D}{\mathbb D}
\newcommand{\G}{\mathbb G}

\newcommand{\lm}{\lambda}

\newcommand{\HS}{\mathcal{H}}
\newcommand{\Hex}{\mathbb{H}}
\newcommand{\al}{\alpha}
\newcommand{\DC}{\overline{\mathbb{D}}}
\newcommand{\B}{\mathbb{B}}
\newcommand{\LS}{\mathcal{L}}
\newcommand{\KS}{\mathcal{K}}
\newcommand{\la}{\langle}
\newcommand{\ra}{\rangle}

\newtheorem{thm}{Theorem}[section]
\newtheorem{cor}[thm]{Corollary}
\newtheorem{lem}[thm]{Lemma}

\newtheorem{prop}[thm]{Proposition}
\numberwithin{equation}{section} \theoremstyle{definition}
\newtheorem{defn}[thm]{Definition}

\newtheorem{eg}[thm]{Example}

\def\textmatrix#1&#2\\#3&#4\\{\bigl({#1 \atop #3}\ {#2 \atop #4}\bigr)}
\def\dispmatrix#1&#2\\#3&#4\\{\left({#1 \atop #3}\ {#2 \atop #4}\right)}

\begin{document}
	
	\title[Function theory of the hexablock]{Function theory of the hexablock and applications to the tetrablock and Euclidean biball}
	\author{SOURAV PAL AND NITIN TOMAR}
	
	\address[Sourav Pal]{Mathematics Department, Indian Institute of Technology Bombay,
		Powai, Mumbai - 400076, India.} \email{sourav@math.iitb.ac.in}
	
		\address[Nitin Tomar]{Statistics and Mathematics unit, Indian Statistical Institute Bangalore, Karnataka - 560059, India.} \email{tomarnitin414@gmail.com, nitin\_pd@isibang.ac.in}

	\keywords{Hexablock, tetrablock, biball, realization, interpolation, extension, Toeplitz corona theorem}	
	
	\subjclass[2020]{32A70, 47A13, 47A57, 46E22, 47B32}

	\begin{abstract}
		The realization, interpolation, extension and Toeplitz corona problems are amongst the central themes in function theory of a domain in $\mathbb{C}^d$. The present work addresses these four problems for the hexablock $\mathbb{H}$, a domain in $\mathbb{C}^4$ arising in connection with a special case of $\mu$-synthesis in $H^{\infty}$ control theory. We determine Schur-Agler class $SA(\mathbb{H})$ for $\mathbb{H}$ and find a realization formula for $\mathbb H$. With the help of this realization formula we state and prove interpolation, extension and Toeplitz corona theorems for the hexablock. As an application, we obtain analogous theorems for the Euclidean unit ball in $\mathbb{C}^2$. Moreover, we recover the existing same results for the tetrablock $\E$, another domain associated with the $\mu$-synthesis, as consequences of the hexablock theory and in terms of the Schur-Agler class $SA(\mathbb{H})$ and admissible kernels on $\mathbb{H}$.
	\end{abstract}	
	
	\maketitle

	\section{Introduction}
	
	\noindent Throughout the article, all operators are bounded linear maps on complex separable Hilbert spaces. Let $\mathcal{B}(\HS)$ be the algebra of operators on a Hilbert space $\HS$. We write $\D$ and $\T$ for the unit disc and unit circle in the complex plane $\C$, respectively, with center at the origin. Let $\sigma_T(\underline{T})$ denote the Taylor joint spectrum of a commuting tuple $\underline{T} = (T_1, \dotsc, T_d)$ of operators. Let $\mathrm{Hol}(\Omega)$ be the space of all holomorphic functions on a domain $\Omega$ in $\C^d$. We denote the Schur class of $\Omega$ by $S(\Omega) = \{ f \in \mathrm{Hol}(\Omega) : \sup_{z \in \Omega} |f(z)| \leq 1 \}$, and $H^\infty(\Omega)$ denotes the space of all bounded holomorphic functions on $\Omega$. The hexablock is a domain in $\C^4$ given by
	\[
	\Hex= \left\{(z^{(1)}, z^{(2)}, z^{(3)}, z^{(4)}) \in \C \times \E :
	\sup_{\alpha_1, \alpha_2 \in \D}
	\left|\frac{z^{(1)}\sqrt{(1-|\alpha_1|^2)(1-|\alpha_2|^2)}}{1-z^{(2)}\alpha_1-z^{(3)}\alpha_2+z^{(4)}\alpha_1\alpha_2}\right|<1 \right\},
	\]
	where $\E=\{(z^{(2)}, z^{(3)}, z^{(4)}) \in \C^3 : \ 1-z^{(2)}\alpha_1-z^{(3)}\alpha_2+z^{(4)}\alpha_1\alpha_2 \ne 0 \ \text{for all} \ \alpha_1, \alpha_2 \in \DC\}$. The domain $\E$ is referred to as the tetrablock. These two domains arise naturally in connection with the $\mu$-synthesis problem in control engineering. To explain this connection, we recall the notion of the structured singular value from \cite{Doyle, Francis}. Let $E$ be a linear subspace of $M_d(\C)$, the space of $d \times d$ complex matrices. The structured singular value of $A \in M_d(\C)$ is defined by
	\[
	\mu_E(A) = 1\slash \inf \{ \|X\| : X \in E,\ \det(I - AX) = 0 \},
	\]
	and $\mu_E(A) = 0$ if there does not exist $X \in E$ with $\det(I - AX) =0$ for all $X \in E$. The notion of $\mu_E$ comes up naturally in the study of systems with structured uncertainty and is important in the $\mu$-synthesis problem. Given distinct points $\alpha_1 , \dots , \alpha_n \in \D$ and matrices $B_1, \dots , B_n \in M_d(\C)$, the $\mu$-synthesis problem asks for an analytic function $F : \D \to M_d(\C)$ that interpolates the given data, i.e., $F(\alpha_i)=B_i$ for $1\leq i \leq n$ and satisfies $\mu_E(F(\lambda)) \le 1$ for all $\lambda \in \D$. Several well-known interpolation problems arise as special cases of the $\mu$-synthesis problem. For example, if $E = M_d(\C)$, then $\mu_E$ is the operator norm and the $\mu$-synthesis problem becomes the matricial Nevanlinna-Pick interpolation problem \cite{Foias_Frazho}. Also, if $E = \{\alpha I : \alpha \in \C\}$, then $\mu_E$ becomes the spectral radius and the corresponding $\mu$-synthesis problem reduces to the spectral Nevanlinna-Pick interpolation problem \cite{AglerI, Costara2005}. Interestingly, exploring different cases of the $\mu$-synthesis problem leads to different domains in $\C^d$. For instance, when $E$ is the linear subspace of all $2 \times 2$ scalar matrices, the associated $\mu$-synthesis problem gives rise to the symmetrized bidisc given by
	\[
	\G_2=\{(\lambda_1 + \lambda_2, \lambda_1 \lambda_2) \in \C^2 : |\lambda_1|, |\lambda_2| < 1\}.
	\]
	If $E$ is the space of all diagonal matrices in $M_2(\C)$, then one obtains the tetrablock $\E$. Also, when $E$ is taken to be the linear subspace of $2 \times 2$ upper triangular matrices with equal diagonal entries, the resulting domain is the pentablock given by
	\[
	\mathbb{P} = \{(a_{21}, \mathrm{tr}(A), \det(A)) \in \C^3 : A = [a_{ij}] \in M_2(\C), \|A\| < 1\}.
	\]
	The domains $\G_2, \E$ and $\Pe$ were introduced in \cite{AglerYoung}, \cite{Abouhajar} and \cite{AglerIV}, respectively. For more details on the connections of these three domains with $\mu$-synthesis, we also refer the readers to Chapter 1 of \cite{Biswas}. Besides from complex-geometric and function-theoretic point of view, the domains $\G_2$, $\E$ and $\Pe$ have been extensively studied from operator theoretic perspective in \cite{AglerYoung2000, AglerYoung2003, Tirtha_Pal_Roy}, \cite{Tirtha} and \cite{Pal_penta}, respectively. Motivated by the origins of these three domains, the authors of \cite{Biswas} considered the linear subspace $E$ consisting of all upper triangular matrices in $M_2(\C)$, which gives rise to the domain $\Hex$. To see this, let us denote by $\mu_{\text{hexa}}$ the corresponding function $\mu_E$ and define
	\[
	\Hex_\mu=\{(a_{21}, a_{11}, a_{22}, \det(A)) : A=[a_{ij}] \in M_2(\C),  \mu_{\text{hexa}}(A)<1\}.
	\]
	The set $\Hex_\mu$ is called $\mu$-hexablock by the authors of \cite{Biswas}. Unlike the $\mu$-hexablock $\Hex_\mu$, the hexablock $\Hex$ is a domain in $\C^4$ that satisfies $\Hex=\text{int}(\overline{\Hex}_\mu)$. The set $\Hex$ is referred to as the hexablock, motivated by the geometry of $\Hex \cap \mathbb{R}^4$, which is a bounded set with boundary consisting of four extreme sets and two hypersurfaces. 
	
	\smallskip 	In this article, we establish realization, interpolation, extension and Toeplitz corona theorems for the domain $\Hex$. As applications of our main results, we obtain analogous theorems for the biball $\B_2$, and we also recover the corresponding results for the tetrablock $\E$ established in \cite{Jain}. However, our characterizations of the realization, interpolation, extension and Toeplitz corona theorems for $\E$ are formulated in terms of admissible kernels on $\Hex$, thereby providing a unified framework for these results on both $\B_2$ and $\E$. These four problems arise naturally in the function theory of domains in $\C^d$ and have been studied extensively over the past several decades. We begin with a brief overview of these topics and recall some of the domains in $\C^d$ for which the corresponding results are known. The study of interpolation on domains in $\C^d$ typically begins with a characterization of the class of functions to which the interpolating functions belong.	Such a characterization is commonly referred to as a realization theorem. For the unit disc $\D$, the realization theorem (see \cite{Agler_McCarthy}) states that $f \in S(\D)$ if and only if there exist a Hilbert space $\HS$ and a unitary operator
	\[
	V=\begin{bmatrix} A & B \\ C & D \end{bmatrix} : \C \oplus \HS \to \C \oplus \HS
	\quad 
	\text{such that} 
	\quad 
	f(z)=A+zB(I_\HS-zD)^{-1}C.
	\]
	Realization formulas of this type have been established for several domains, including the annulus \cite{Drit2007_I} and the bidisc \cite{Agler1990, Agler_McCarthy}. In higher dimensions, such as the polydisc $\D^d$ for $d \geq 3$, not every Schur class function admits a realization. However, an important subclass, known as the Schur-Agler class, admits such a realization theorem \cite{Agler_McCarthy}. This framework has been further generalized to abstract settings where the domain is replaced by a set equipped with a family of test functions \cite{Drit2007_I, Drit2007_II, Ball_Guerra}. Realization theorems have also been obtained for domains arising from $\mu$-synthesis, such as the symmetrized bidisc \cite{AglerYoung2017, Tirtha_Sau}, the tetrablock \cite{Jain} and the pentablock \cite{Pal2026}. Once a realization theorem is established, one can address an interpolation problem on a domain in $\C^d$. The classical Nevanlinna-Pick interpolation theorem \cite{Nevanlinna, Pick} states that given distinct points $z_1, \dotsc, z_n$ in $\D$ and points $\lambda_1, \dotsc, \lambda_n$ in $\overline{\D}$, there exists a holomorphic function $f : \D \to \overline{\D}$ satisfying $f(z_i)=\lambda_i$ for $1 \leq i \leq n$ if and only if the Pick matrix
	\[
	\left[\frac{1-\lambda_i\overline{\lambda}_j}{1-z_i\overline{z}_j}\right]_{i,j=1}^n
	\]
	is positive semi-definite. The above positivity condition of the Pick matrix is same as the positive semi-definiteness of $\left[(1-\lm_i\overline{\lm}_j)k_S(z_i, z_j)\right]_{i, j=1}^n$, where $k_S(z, w)=(1-z\overline{w})^{-1}$ is the Szeg\H{o} kernel on $\D$. For more general domains, interpolation problems often require families of admissible kernels, as seen in the work of Abrahamse \cite{AbrahamseI} for multiply connected domains and Agler-McCarthy for the bidisc \cite{Agler_McCarthyI, Agler_McCarthy}. In this direction, one considers the following interpolation problem: Let $\Omega \subset \C^d$ be a domain. Given distinct points $z_1, \dotsc, z_n$ in $\Omega$ and points $\lm_1, \dotsc, \lm_n$ in $\overline{\D}$, when does there exist a holomorphic function $f : \Omega \to \C$ with $f(z_i)=\lm_i$ for $1 \leq i \leq n$? One usually imposes additional norm conditions on $f$. Such problems have been studied for several domains in the literature. An interested reader is referred to \cite{Deepak_Jaydeb, Kosinski_2015} for the polydisc $\D^d$ and \cite{Bhatt_Deepak_Jaydeb, Deepak_Jaydeb_II, Esch_Put, Kosinski_II} for the Euclidean unit ball $\mathbb{B}_d$. In addition, various interpolation problems in the class of Schur multipliers on $\B_d$ have been extensively studied in \cite{Ball_2003, Bolotnikov, Popescu}. The interpolation theorems for $\G_2$, $\E$ and $\Pe$ were presented in \cite{AglerYoung2017, Tirtha_Sau}, \cite{Jain} and \cite{Pal2026}, respectively. Finally, interpolation results often lead naturally to extension theorems. Let $V$ be a subset of a domain $\Omega \subset \C^d$. A function $f : V \to \C$ is said to be holomorphic if it extends to a holomorphic function in a neighbourhood of $V$. The norm-preserving extension problem asks for necessary and sufficient conditions under which every bounded holomorphic function on $V$ admits an extension to a function in $H^\infty(\Omega)$ without increasing its norm. Such extension results are established for several domains including the bidisc \cite{Agler_McCarthy_2003}, the symmetrized bidisc \cite{Tirtha_Sau}, the tetrablock \cite{Jain} and the pentablock \cite{Pal2026}. 
	
	\smallskip 
	
	Another important application of realization theorems for domains in $\C^d$ is the study of Toeplitz corona theorems associated with them. The classical corona problem for the Banach algebra $H^\infty(\D)$ was solved by Carleson \cite{Carleson}, who proved that if $\varphi_1,\dotsc,\varphi_d \in H^\infty(\D)$ satisfy
	\begin{equation}\label{eqn_corona}
		 \sup_{z \in\D}\left[|\varphi_1(z)|^2+\dotsc+|\varphi_n(z)|^2\right] \geq \epsilon^2
	\end{equation}
	for some $\epsilon>0$, then there exist $f_1,\dotsc,f_d \in H^\infty(\D)$ such that
	$
	\varphi_1f_1+\dotsc+\varphi_df_d=1.
	$
	Arveson \cite{Arveson_TC} established an operator-theoretic analogue, now known as the Toeplitz corona theorem, by replacing  \eqref{eqn_corona} with the operator inequality
	$
	T_{\varphi_1}T_{\varphi_1}^*+\dotsc+T_{\varphi_d}T_{\varphi_d}^*\geq \epsilon^2I.
	$
	Here,  $T_{\varphi_i}$ is the Toeplitz operator with symbol $\varphi_i$ for $1 \leq i \leq d$. Toeplitz corona theorems have been established for several domains including the bidisc \cite{Agler_McCarthyI}, the Euclidean unit ball and the polydisc \cite{Amar}, the symmetrized bidisc \cite{Tirtha_Sau_II}, the tetrablock \cite{Jain} and the pentablock \cite{Pal_penta_TC}.
		
	\smallskip 
	
	The principal domains of interest in this article are the hexablock $\Hex$, the tetrablock $\E$ and the biball $\B_2$. We present realization, interpolation, extension and Toeplitz corona theorems for $\Hex$ in Section \ref{sec_hexa}.  To begin with, we introduce the class $\mathfrak{M}_\Hex$ of commuting operator quadruples $\underline{T}=(T_1, T_2, T_3, T_4)$ satisfying $\|T_3\|<1, \|(\al T_4-T_2)(\al T_3-I)^{-1}\|<1$ and 
	\[
	\|\sqrt{(1-|\al_1|^2)(1-|\al_2|^2)}T_1(I-\al_1T_2-\al_2T_3+\al_1\al_2T_4)^{-1}\| <1
	\]
	for all $\al \in \DC^2$ and $(\al_1, \al_2) \in \DC^2$. The motivation behind considering $\mathfrak{M}_\Hex$ comes from the definition of $\Hex$, which is explained in Section \ref{sec_hexa}. Also, any operator quadruple in $\mathfrak{M}_\Hex$ has its joint spectrum contained in $\Hex$ and so, one can define the functional calculus $f(\underline{T})$ for every $\underline{T} \in \mathfrak{M}_\Hex$ and $f \in \text{Hol}(\Hex)$. In light of this observation, we define the Schur-Agler class for $\Hex$ as the class given by
	\[
	SA(\Hex)=\left\{f \in \text{Hol}(\Hex) : \ \text{$\|f(\underline{T})\| \leq 1$ for all $\underline{T} \in \mathfrak{M}_\Hex$}\right\}.
	\]
	Our realization theorem for $\Hex$ is precisely the characterization of functions in $SA(\Hex)$ stated as in Theorem \ref{thm_realization_H}. As mentioned earlier, realization theorem plays a pivotal role to obtain a corresponding Nevanlinna-Pick type interpolation theorem. In the same spirit, we capitalize the realization theorem for Schur-Agler class $SA(\Hex)$ and obtain in Theorem \ref{thm_interpolation_H} necessary and sufficient conditions for an interpolation problem on $\Hex$, where the interpolating function belongs to $SA(\Hex)$. Next, we turn our attention to the extension problem on $\Hex$. To do so, we introduce in Section \ref{sec_hexa} the class $Q\Hex$ consisting of all commuting quadruples $\underline{T}=(T_1, T_2, T_3, T_4)$ of Hilbert space operators with $\sigma_T(\underline{T}) \subseteq \Hex$ such that $\|T_3\| \leq 1, \|(\al T_4-T_2)(\al T_3-I)^{-1}\| \leq 1$ and 
	\[
	\|\sqrt{(1-|\al_1|^2)(1-|\al_2|^2)}T_1(I-\al_1T_2-\al_2T_3+\al_1\al_2T_4)^{-1}\| \leq 1
	\]
	for all $\al \in \DC^2$ and $(\al_1, \al_2) \in \DC^2$. Following the terminologies in  \cite{Mittal}, we refer to $Q\Hex$ as the quantum hexablock. By quantization, we mean that scalar variables are replaced by commuting operators satisfying analogous norm inequalities. Finally, we define the notion of subordination to a subset $W$ of $\Hex$ and consider the operator quadruples in $Q\Hex$ that are subordinate to $W$. With these notions in place, we present our extension theorem as Theorem \ref{thm_ext_H}. In particular, it is proved that if $W \subseteq \Hex$ and $f$ is a bounded function on $W$ that is holomorphic on $W$, then there exists $g \in H^\infty(\Hex)$ with $g|_W = f$, $\|g\|_{\infty,\Hex} = \|f\|_{\infty, W}$ and $g/\|f\|_{\infty,W} \in SA(\Hex)$ if and only if  
	$
	\|f(\underline{T})\| \le \|f\|_{\infty,W}
	$ 
	for every $\underline{T} \in Q\Hex$ subordinate to $W$. We also establish Toeplitz corona theorem for $\Hex$ as Theorem \ref{thm_TC_H_II}.

	\smallskip 
	
	Next, we present in Section \ref{sec_biball} the realization, interpolation, extension and Toeplitz corona type theorems on the biball $\B_2=\{(z^{(1)}, z^{(2)}) \in \C^2 : |z^{(1)}|^2+ |z^{(2)}|^2<1 \}$ as applications of the corresponding theorems associated with $\Hex$. The starting point in this direction is the fact that $(z^{(1)}, z^{(2)}) \in \mathbb{B}_2$ if and only if $(z^{(1)}, z^{(2)}, 0, 0) \in \Hex$. Moreover, if $(z^{(1)}, z^{(2)}, z^{(3)}, z^{(4)}) \in \Hex$, then $(z^{(1)}, z^{(2)}) \in \mathbb{B}_2$. Consequently, the domain $\B_2$ can be embedded holomorphically in $\Hex$. We refer to Chapter 10 in \cite{Biswas} for further details on the connection of $\B_2$ with $\Hex$. Motivated by this, we consider the class 
	\begin{align*}
		\mathfrak{M}_{\B_2}&=\{(T_1, T_2): (T_1, T_2, 0, 0) \in \mathfrak{M}_\Hex\}.
	\end{align*}
	For an operator pair $(T_1, T_2) \in \mathfrak{M}_{\B_2}$, it follows that $\sigma_T(T_1, T_2) \subseteq \B_2$ and so, one can define $g(T_1, T_2)$ for all $g \in \text{Hol}(\B_2)$. We then consider a subclass of the Schur class $S(\B_2)$ give by 
	\[
	\mathfrak{C}(\B_2)=\{g \in \text{Hol}(\B_2): \|g(T_1, T_2)\|\leq 1 \ \text{for all} \ (T_1, T_2) \in \mathfrak{M}_{\B_2} \}.
	\]
	It is proved in Theorem \ref{thm_realization_H_B2} that $g \in \mathfrak{C}(\B_2)$ if and only if $g\circ \theta_{\Hex \to \B_2} \in SA(\Hex)$, where $\theta_{\Hex \to \B_2}$ is the map give by $\theta_{\Hex \to \B_2}(z^{(1)}, z^{(2)}, z^{(3)}, z^{(4)})=(z^{(1)}, z^{(2)})$. Consequently, the realization theorem for $SA(\Hex)$ yields a realization theorem for functions in $\mathfrak{C}(\B_2)$. In an analogous manner, we obtain interpolation extension and Toeplitz corona type results on $\B_2$ in Theorems \ref{thm_interpolation_H_B2}, \ref{thm_ext_H_B2} and \ref{thm_TC_B2}, respectively, from the corresponding theorems on $\Hex$. In Section \ref{sec_tetra}, we recover the realization, interpolation, extension and Toeplitz corona theorems on the tetrablock $\E$, where the realization theorems for functions in the Schur-Agler class $SA(\E)$ is now presented in terms of functions in $SA(\Hex)$. Similarly, the interpolation theorem is resolved in terms of functions and kernels associated with the hexablock framework. To begin with, note that if $(z^{(1)}, z^{(2)}, z^{(3)}, z^{(4)}) \in \Hex$, then $(z^{(2)}, z^{(3)}, z^{(4)}) \in \E$. Moreover, $(z^{(1)}, z^{(2)}, z^{(3)}) \in \E$ if and only if $(0, z^{(1)}, z^{(2)}, z^{(3)}) \in \Hex$. Therefore, the domain $\E$ can be holomorphically embedded inside $\Hex$. As an application of this, we prove in Theorem \ref{thm_realization_H_E} that $g \in SA(\E)$ if and only if $g\circ \theta_{\Hex \to \E} \in SA(\Hex)$, where $\theta_{\Hex \to \E}(z^{(1)}, z^{(2)}, z^{(3)}, z^{(4)})=(z^{(1)}, z^{(2)}, z^{(3)})$. This provides a realization theorem for functions in $SA(\E)$ through the realization theorem for functions in $SA(\Hex)$ presented in Section \ref{sec_hexa}. In a similar way, we present in Theorems \ref{thm_interpolation_H_E}, \ref{thm_ext_H_E} and \ref{thm_TC_E} the interpolation, extension and Toeplitz corona theorems on $\E$, respectively, as applications of the corresponding results on $\Hex$ proved in Section \ref{sec_hexa}.
	
	\section{Realization, interpolation, extension and Toeplitz corona theorems on $\Hex$}\label{sec_hexa}
	
	\noindent In this section, we introduce the Schur-Agler class for the hexablock $\Hex$ and establish a realization theorem for functions in this class. Also, we prove an interpolation theorem on $\Hex$ with interpolating functions in the Schur-Agler class for $\Hex$. Recall that the hexablock is given by
	\[
	\mathbb H= \left\{(z^{(1)}, z^{(2)}, z^{(3)}, z^{(4)}) \in \mathbb{C} \times \mathbb{E} :
	\sup_{\alpha_1, \alpha_2 \in \mathbb{D}}
	\left|\frac{z^{(1)}\sqrt{(1-|\al_1|^2)(1-|\al_2|^2)}}{1-z^{(2)}\al_1-z^{(3)}\al_2+z^{(4)}\al_1\al_2}\right|<1 \right\},
	\]
	where $\E$ itself is a domain in $\C^3$ defined as
	\[
	\E=\left\{(z^{(2)}, z^{(3)}, z^{(4)}) \in \C^3 : 1-z^{(2)}\al_1-z^{(3)}\al_2+z^{(4)}\al_1\al_2 \neq 0 \ \text{for all} \ \al_1, \al_2 \in \overline{\D}\right\}.
	\]
	The domain $\E$ is called the tetrablock. The authors of \cite{Abouhajar} gave the following characterization of $\E$:
	\begin{equation}\label{eqn_E}
		(z^{(2)}, z^{(3)}, z^{(4)}) \in \E \ \  \text{if and only if}	\ |z^{(3)}|<1  \ \text{and} \ |\Psi(\al, z^{(2)}, z^{(3)}, z^{(4)})|<1 \ \text{for all} \ \al \in \DC, 
	\end{equation}
	where $\Psi(\al, z^{(2)}, z^{(3)}, z^{(4)})=(\al z^{(4)}-z^{(2)})\slash (\al z^{(3)}-1)$. Capitalizing the above description of $\E$, the authors of \cite{Biswas} proved that $z=(z^{(1)}, z^{(2)}, z^{(3)}, z^{(4)}) \in \Hex$ if and only if 
	\begin{equation}\label{eqn_H}
		|z^{(3)}|<1, \quad |\Psi(\al, z^{(2)}, z^{(3)}, z^{(4)})|<1 \quad \text{and} \quad |\psi_{\al_1, \al_2}(z^{(1)}, z^{(2)}, z^{(3)}, z^{(4)})|<1
	\end{equation}
	for all $\al, \al_1, \al_2 \in \DC$, where 
	\[
	\psi_{\al_1, \al_2}(z^{(1)}, z^{(2)}, z^{(3)}, z^{(4)})=\frac{z^{(1)}\sqrt{(1-|\al_1|^2)(1-|\al_2|^2)}}{1-z^{(2)}\al_1-z^{(3)}\al_2+z^{(4)}\al_1\al_2}.
	\]
	An interesting fact about the domains $\E$ and $\Hex$ is that both are quasi-balanced domains, i.e., $(rw^{(2)}, rw^{(3)}, r^2w^{(4)}) \in \E$ and $(rz^{(1)}, rz^{(2)}, rz^{(3)}, r^2z^{(4)}) \in \Hex$ for $0 \leq r \leq 1, (w^{(2)}, w^{(3)}, w^{(4)}) \in \E$ and $(z^{(1)}, z^{(2)}, z^{(3)}, z^{(4)}) \in \Hex$. Motivated by the description of $\Hex$ as in \eqref{eqn_H}, we define for every point $z = (z^{(1)}, z^{(2)}, z^{(3)}, z^{(4)}) \in \Hex$ the following functions:
	\begin{align}\label{eqn_E(z)}
		&E(z): \DC^2 \to \C, \quad E(z)(\al_1, \al_2)=\psi_{\al_1, \al_2}(z^{(1)}, z^{(2)}, z^{(3)}, z^{(4)})=\frac{z^{(1)}\sqrt{(1-|\al_1|^2)(1-|\al_2|^2)}}{1-z^{(2)}\al_1-z^{(3)}\al_2+z^{(4)}\al_1\al_2}; \notag \\
		& e(z): \DC \to \C,  \qquad e(z)(\al) =\Psi(\al, z^{(2)}, z^{(3)}, z^{(4)})=\frac{\al z^{(4)}-z^{(2)}}{\al z^{(3)}-1}.
	\end{align}
	It is evident that $E(z)$ and $e(z)$ are continuous functions on $\DC^2$ and $\DC$, respectively, for every $z \in \Hex$. Furthermore, the supremum norms $\|E(z)\|_{\infty, \DC^2}, \|e(z)\|_{\infty, \DC}<1$ for every $z \in \Hex$. We now introduce an operator-theoretic analog of the inequalities provided in \eqref{eqn_H}. Let $(T_1, T_2, T_3, T_4)$ be a commuting quadruple of operators acting on a Hilbert space $\HS$, and let $\|T_3\|<1$. Evidently, the operator $(\alpha T_3-I)$ is invertible for all $\al \in \DC$. Additionally, assume that
	\[
	\|\Psi(\al, T_2, T_3, T_4)\|=\|(\al T_4-T_2)(\al T_3-I)^{-1}\|<1 
	\]
	for all $\al \in \DC$. The class of commuting triples of operators given by 
	\[
	\mathfrak{M}_\E=\left\{(T_2, T_3, T_4): \|T_3\|<1, \|\Psi(\al, T_2, T_3, T_4)\|<1 \ \text{for all} \ \al \in \DC\right\}
	\]
	was introduced by the authors of \cite{Jain}. The inequalities in $\mathfrak{M}_\E$ may be viewed as an operator theoretic analog of the inequalities in \eqref{eqn_E}, where the scalars in $\E$ are replaced in \eqref{eqn_E} by commuting operator triples satisfying similar norm bounds. The class $\mathfrak{M}_\E$ contains $\E$ in the sense that $(z^{(2)}I, z^{(3)}I, z^{(4)}I) \in \mathfrak{M}_\E$ for all $(z^{(2)}, z^{(3)}, z^{(4)}) \in \E$, which follows from \eqref{eqn_E}. 
	
	\smallskip 
	
	We now show that the joint spectrum of an operator triple in $\mathfrak{M}_\E$ is contained in $\E$. This inclusion has been established in \cite{Jain}. We provide an alternative proof here. Let $(T_2, T_3, T_4) \in \mathfrak{M}_\E$ and let $(w^{(2)}, w^{(3)}, w^{(4)}) \in \sigma_T(T_2, T_3, T_4)$. By projection property of the joint spectrum, it follows that $w^{(3)} \in \sigma(T_3)$ and so, $|w^{(3)}|<1$ since $\|T_3\|<1$. For $\al \in \DC$, we have that $\|\Psi(\al, T_2, T_3, T_4)\|<1$. It now follows from spectral mapping principle that
	\[
	\Psi(\al, w^{(2)}, w^{(3)}, w^{(4)}) \in \Psi(\alpha, \sigma_T(T_2, T_3, T_4))=\sigma(\Psi(\al, T_2, T_3, T_4))\subseteq \D.
	\]
	By \eqref{eqn_E}, $(w^{(2)}, w^{(3)}, w^{(4)}) \in \E$ and $\sigma_T(T_2, T_3, T_4) \subseteq \E$. So, one can define $f(T_2, T_3, T_4)$ for every $f \in \text{Hol}(\E)$ and $(T_2, T_3, T_4) \in \mathfrak{M}_\E$ via the holomorphic functional calculus. The Schur-Agler class for $\E$ is defined as 
	\[
	SA(\E)=\{f \in \text{Hol$(\E)$} : \|f(T_2, T_3, T_4)\| <1 \ \text{for all} \ (T_2, T_3, T_4) \in \mathfrak{M}_\E \}.
	\]
	The reader is referred to \cite{Jain} for further details on $\mathfrak{M}_\E$ and $SA(\E)$, where these two classes were introduced. Next, for $(\al_1, \al_2) \in \DC^2$, consider the polynomial 
	\[
	p(z^{(2)}, z^{(3)}, z^{(4)})=1-z^{(2)}\al_1-z^{(3)}\al_2+z^{(4)}\al_1\al_2.
	\]
	By the definition of $\E$, the function $p$ does not vanish on $\E$ and consequently, it is non-vanishing on $\sigma_T(T_2, T_3, T_4)$ for every $(T_2, T_3, T_4) \in \mathfrak{M}_\E$. Again by spectral mapping principle, the operator 
	$
	p(T_2, T_3, T_4)=I-\al_1T_2-\al_2T_3+\al_1\al_2T_4
	$
	is invertible for every $(\al_1, \al_2) \in \DC^2$. Consequently, one can define the operator 
	\[
	\psi_{\al_1, \al_2}(T_1, T_2, T_3, T_4)=\sqrt{(1-|\al_1|^2)(1-|\al_2|^2)}T_1(I-\al_1T_2-\al_2T_3+\al_1\al_2T_4)^{-1}
	\]
	for all $(\al_1, \al_2) \in \DC^2$ and commuting quadruples $(T_1, T_2, T_3, T_4)$ of Hilbert space operators such that $(T_2, T_3, T_4) \in \mathfrak{M}_\E$. In view of this, we introduce the class $\mathfrak{M}_\Hex$ consisting of all commuting quadruples $\underline{T}=(T_1, T_2, T_3, T_4)$ of Hilbert space operators such that 
	\[
	\|T_3\| <1, \quad \|\Psi(\al, T_2, T_3, T_4)\|<1 \quad \text{and} \quad \|\psi_{\al_1, \al_2}(T_1, T_2, T_3, T_4)\|<1
	\]
	for all $\al, \al_1, \al_2 \in \DC$. We note the following basic properties of $\mathfrak{M}_\Hex$, which will be used subsequently in this section.
	\begin{enumerate}[leftmargin=*]
		
		\item $\mathfrak{M}_\Hex$ contains $\Hex$ in the sense that $(z^{(1)}I, z^{(2)}I, z^{(3)}I, z^{(4)}I) \in \mathfrak{M}_\Hex$ for all $(z^{(1)}, z^{(2)}, z^{(3)}, z^{(4)}) \in \Hex$.
		
		\item It is evident to see that $(T_1, T_2, T_3, T_4) \in \mathfrak{M}_\Hex$ if and only if $(T_1^*, T_2^*, T_3^*, T_4^*) \in \mathfrak{M}_\Hex$.
		
		\item Since $\Hex$ is a $(1, 1, 1, 2)$-quasi-balanced domain, it follows that 
		$(rT_1, rT_2, rT_3, r^2T_4) \in \mathfrak{M}_\Hex$ for $0 \leq r \leq 1$ and $(T_1, T_2, T_3, T_4) \in \mathfrak{M}_\Hex$.
		
		\item For a commuting operator quadruple $(T_1, T_2, T_3, T_4) \in \mathfrak{M}_\Hex$, it is evident that $(T_2, T_3, T_4) \in \mathfrak{M}_\E$.
		
		\item It is easy to verify that $\sigma_T(T_1, T_2, T_3, T_4) \subseteq \Hex$ for all $(T_1, T_2, T_3, T_4) \in \mathfrak{M}_\Hex$. To see this, let $\underline{T}=(T_1, T_2, T_3, T_4) \in \mathfrak{M}_\Hex$ and let $(w^{(1)}, w^{(2)}, w^{(3)}, w^{(4)}) \in \sigma_T(\underline{T})$. By projection property of the joint spectrum, $(w^{(2)}, w^{(3)}, w^{(4)}) \in \sigma_T(T_2, T_3, T_4)$. Since $(T_2, T_3, T_4) \in \mathfrak{M}_\E$, it follows that $(w^{(2)}, w^{(3)}, w^{(4)}) \in \E$. For $\alpha_1, \alpha_2 \in \DC$, we have that $\|\psi_{\alpha_1, \alpha_2}(T_1, T_2, T_3, T_4)\|<1$. By spectral mapping mapping principle, we have
		\[
		\psi_{\alpha_1, \alpha_2}(w^{(1)}, w^{(2)}, w^{(3)}, w^{(4)}) \in \psi_{\alpha_1, \alpha_2}(\sigma_T(T_1, T_2, T_3, T_4))=\sigma(\psi_{\alpha_1, \alpha_2}(T_1, T_2, T_3, T_4)) \subseteq \D 
		\]
		and by \eqref{eqn_H}, $(w^{(1)}, w^{(2)}, w^{(3)}, w^{(4)}) \in \Hex$. This ensures that the functional calculus $f(\underline{T})$ is well-defined for any $f \in \text{Hol}(\Hex)$ and $\underline{T} \in \mathfrak{M}_\Hex$.
	\end{enumerate}
	Accordingly, we now introduce the Schur-Agler class for $\Hex$, a subclass of $S(\Hex)$, defined by
	\[
	SA(\Hex)=\left\{f \in \text{Hol}(\Hex) : \|f(\underline{T})\| <1 \ \text{for all} \ \underline{T} \in \mathfrak{M}_\Hex\right\}.
	\]
	
	\begin{defn} For a non-empty set $Y$, a map $f: Y \times Y \to \C$ is said to be positive semi-definite if
		$
		\sum_{i,j=1}^n \overline{c}_ic_j f(y_i, y_j) \geq 0$
		for every $\{y_1,\dotsc, y_n\} \subseteq Y, \{c_1,\ldots,c_n\} \subset \C$ and $n \in \N$.  A positive semi-definite function $k: \Hex \times \Hex \to \C$ is referred to as a weak kernel on $\Hex$. In addition, if $k(z, z) \ne 0$ for all $z \in \Hex$, we say that $k$ is a kernel on $\Hex$. Let $F$ be a subset of $\Hex$ and $K \subseteq \C^m$ be compact. A function $\xi: F \times F \to C(K)^*$ is said to be a positive kernel with values in the dual $C(K)^*$ of the space $C(K)$ of complex-valued continuous functions  on $K$ if the following holds:
		\[
		\overset{n}{\underset{i, j=1}{\sum}}\overline{c}_ic_j\xi(z_i, z_j)(f_i \overline{f}_j) \geq 0
		\]  
		for all $n \in \N, \{z_1, \dotsc, z_n\} \subseteq F, \{c_1, \dotsc, c_n\} \subset \C$ and $\{f_1, \dotsc, f_n\} \subseteq C(K)$. A kernel (weak kernel) $k: \Hex \times \Hex \to \C$ is said to be \textit{admissible} (\textit{weakly admissible}) if the following hold:
		\begin{enumerate}[leftmargin=*]
			\item $\left(1-z^{(3)}\overline{w}^{(3)}\right)k(z, w) \succcurlyeq 0$; \smallskip 
			\item $\left(1-\Psi(\al, z^{(2)}, z^{(3)}, z^{(4)})\overline{\Psi(\al, w^{(2)}, w^{(3)}, w^{(4)})}\right)k(z, w) \succcurlyeq 0$ for all $\al \in \DC$; \smallskip 
			\item $\left(1-\psi_{\al_1, \al_2}(z^{(1)}, z^{(2)}, z^{(3)}, z^{(4)})\overline{\psi_{\al_1, \al_2}(w^{(1)}, w^{(2)}, w^{(3)}, w^{(4)}})\right)k(z, w) \succcurlyeq 0$ for all $(\al_1, \al_2) \in \DC^2$.
		\end{enumerate}
		
	\end{defn} 
	
	We write $f \succcurlyeq 0$ to denote that $f$ is a positive semi-definite function. We denote by $C(K)_F^+$ the set of all positive kernels with values in $C(K)^*$, and the set of all positive semi-definite functions $\delta: F \times F \to \C$ is denoted by $\C_F^+$. The class of admissible kernels on $\Hex$ is denoted by $AK(\Hex)$. With these definitions in place, we now present the following result. 
	
	\begin{prop}\label{prop_prelim_I_H}
		Let $F$ be a subset of $\Hex$ and let $K$ be a compact subset of $\C^n$. If $\xi \in C(K)_F^+$, then there exist a Hilbert space $\HS$, a function $L: F \to \mathcal{B}(C(K),\HS)$ and a unital $*$-representation $\rho: C(K) \to \mathcal{B}(\HS)$ such that
		\[
		\xi(z, w)(f\overline{g})=\langle L(z)f, L(w)g \rangle \quad \text{and} \quad L(z)(fg)=\rho(f)L(z)(g)
		\]	
		for all $f, g \in C(K)$ and $z, w  \in F$. 
	\end{prop}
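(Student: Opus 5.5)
Since this is a GNS-type (Kolmogorov plus Stinespring) construction, I would build everything directly from $\xi$. Let $\mathcal{V}$ be the vector space of finitely supported functions $F \to C(K)$; its elements are formal sums $\sum_i \delta_{z_i}\otimes f_i$. On $\mathcal{V}$ define the sesquilinear form
\[
\Big\langle \sum_j \delta_{w_j}\otimes g_j, \sum_i \delta_{z_i}\otimes f_i\Big\rangle_0=\sum_{i,j}\xi(w_j, z_i)(g_j\overline{f_i}).
\]
The positivity of $\xi$ as stated in the definition of $C(K)_F^+$ gives exactly $\langle v,v\rangle_0\ge 0$, with the indexing conventions matched to the order of arguments. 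Quotient by the null space $\mathcal{N}=\{v:\langle v,v\rangle_0=0\}$; Cauchy--Schwarz makes $\mathcal{N}$ a subspace. Complete the quotient to get $\HS$, and set $L(z)f=[\delta_z\otimes f]$. Then $\langle L(z)f,L(w)g\rangle=\xi(z,w)(f\overline{g})$ holds by construction. Linearity of $L(z)$ follows since $\xi(z,w)$ is a linear functional.

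Next I would show that $L(z)$ is bounded. We have $\|L(z)f\|^2=\xi(z,z)(|f|^2)$, and $\xi(z,z)$ is a positive functional on $C(K)$, taking $n=1$. Hence $\xi(z,z)(|f|^2)\le \|\xi(z,z)\|\,\|f\|_\infty^2$, so $L(z)\in\mathcal{B}(C(K),\HS)$.

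For the representation, define $\rho(h)$ on $\mathcal{V}$ by $\rho(h)(\delta_z\otimes f)=\delta_z\otimes hf$. Then the identity $L(z)(hf)=\rho(h)L(z)f$ is built in, and $\rho$ is clearly a unital, multiplicative map that respects $*$ formally, since $\langle \rho(h)u,v\rangle_0=\langle u,\rho(\overline h)v\rangle_0$ by the product structure inside $\xi$.

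The main obstacle is showing that $\rho(h)$ is well defined on the quotient and bounded by $\|h\|_\infty$. I would prove
\[
\langle \rho(h)v,\rho(h)v\rangle_0\le \|h\|_\infty^2\langle v,v\rangle_0 .
\]
To do so, write $\|h\|_\infty^2-|h|^2=|k|^2$ with $k=(\|h\|_\infty^2-|h|^2)^{1/2}\in C(K)$. Then
\[
\|h\|_\infty^2\langle v,v\rangle_0-\langle\rho(h)v,\rho(h)v\rangle_0=\langle\rho(k)v,\rho(k)v\rangle_0\ge 0,
\]
which follows by applying the positivity of $\xi$ to the functions $kf_i$. This inequality shows that $\rho(h)$ maps $\mathcal{N}$ into $\mathcal{N}$ and extends boundedly to $\HS$. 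The algebraic identities then pass to the completion by continuity, giving a unital $*$-representation.
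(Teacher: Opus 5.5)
Your proposal is correct and follows essentially the same route as the paper: a Kolmogorov/GNS factorization of the positive kernel $((z,f),(w,g))\mapsto \xi(z,w)(f\overline{g})$ (the paper cites Theorem 2.53 of Agler--McCarthy for this), the bound $\|L(z)f\|^2=\xi(z,z)(|f|^2)\le\|\xi(z,z)\|\,\|f\|_\infty^2$, and $\rho$ defined by multiplying the $C(K)$-entry. Your inequality $\langle\rho(h)v,\rho(h)v\rangle_0\le\|h\|_\infty^2\langle v,v\rangle_0$, proved via $k=(\|h\|_\infty^2-|h|^2)^{1/2}$, supplies the well-definedness and boundedness of $\rho$ that the paper calls ``a simple calculation,'' and building the space from finitely supported $C(K)$-valued functions on $F$ makes linearity of $L(z)$ automatic, whereas the paper leaves that point unchecked.
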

	
	\begin{proof}
		Define the map $\xi': (F \times C(K)) \times (F \times C(K)) \to \C$ as $\xi'((z, h_1), (w, h_2))=\xi(z, w)(h_1\overline{h}_2)$. It is not difficult to see that $\xi'$ is a positive definite function on $F \times C(K)$. It follows from Theorem 2.53 in \cite{Agler_McCarthy} that there is a Hilbert space $\mathcal{H}$ and a vector-valued map $g: F \times C(K) \to \mathcal{H}$ such that 
		\[
		\langle g(z, h_1), g(w, h_2)\rangle_{\mathcal{H}}=\xi'((z, h_1), (w, h_2))=\xi(z, w)(h_1\overline{h}_2)
		\]
		for all $h_1, h_2 \in C(K)$ and  $z, w \in F$. Indeed, one can choose the Hilbert space $\HS=\overline{\text{span}}\{g(z, h): z \in F, h \in C(K)\}$. Let us define
		$L: F \to \mathcal{B}(C(K), \HS)$ as $L(z)(h)=g(z, h)$. Then 
		\[
		\xi(z, w)(f\overline{g})=\langle L(z)f, L(w)g \rangle \quad \text{and} \quad \|L(z)(h)\|^2=\|g(z, h)\|^2=\|\xi(z, z)(h\overline{h})\| \leq \|\xi(z, z)\| \|h\|^2_{\infty, K}
		\]
		for all $z \in F, h \in C(K)$. Consider the map $\rho: C(K) \to \mathcal{B}(\HS)$ given by $\rho(h_1)g(z, h_2)=g(z, h_1h_2)$. A simple calculation shows that $\rho$ is a unital $*$-representation such that $\rho(f)L(z)(g)=L(z)(fg)$.
	\end{proof}
	
	For a non-empty set $Y$, a map $f: Y \times Y \to \C$ is said to be \textit{self-adjoint} on $Y$ if $f(z, w)=\overline{f(w, z)}$ for all $z, w \in Y$. Our next result presents a description of self-adjoint functions on $\Hex$ such that their product with every admissible kernel is positive semi-definite. As before, we follow the methods developed in Section 3 of \cite{Drit2007_I} and Section 5 of \cite{Drit2007_II}. Let $F$ be a finite subset of $\Hex$ with cardinality $|F|$. Let $\mathfrak{C}_F$ be the collection of matrices of the form 
	\[
	\left[\xi(z, w)(1-E(z)\overline{E(w)})+\nabla(z, w)(1-e(z)\overline{e(w)})+(1-z^{(3)}\overline{w}^{(3)})\delta(z, w)\right]_{z, w \in F},
	\]
	where $\xi \in C(\overline{\D}^2)^+_F, \nabla \in C(\DC)^+_F$ and $\delta \in \C_F^+$. Following the same arguments as in Lemma 3.4 of \cite{Drit2007_I}, we observe that $\mathfrak{C}_F$ is a closed cone in $M_{|F|}(\C)$, the space of $|F| \times |F|$ matrices. Furthermore, any positive semi-definite matrix in $M_{|F|}(\C)$ belongs to $\mathfrak{C}_F^+$ and hence, has non-empty interior. To see this, let $P=[P(z, w)]_{z, w \in F}$ be a positive semi-definite matrix. Define $\delta(z, w)=P(z, w)\delta_0(z, w)$, where $\displaystyle \delta_0(z, w)=\frac{1}{1-z^{(3)}\overline{w}^{(3)}}$ for $z, w \in F$. A simple computation shows that $\delta \in \C_F^+$ and so, 
	\[
	[P(z, w)]_{z, w \in F}=[\delta(z, w)(1-z^{(3)}\overline{w}^{(3)})]_{z, w \in F} \in \mathfrak{C}_F.
	\]
	We refer the readers to \cite{Drit2007_I, Drit2007_II} for a more detailed discussion in a broader setting.
	
	\begin{thm}\label{thm_sa_H}
		Let $g: \Hex \times \Hex \to \C$ be a self-adjoint function. If $gk \succcurlyeq 0$ for all $k \in AK(\Hex)$, then there exist $\xi \in C(\overline{\D}^2)^+_\Hex, \nabla \in C(\DC)^+_\Hex$ and $\delta \in \C_\Hex^+$ such that 
		for all $z, w \in \Hex$,
		\[
		g(z, w)=\xi(z, w)(1-E(z)\overline{E(w)})+\nabla(z, w)(1-e(z)\overline{e(w)})+(1-z^{(3)}\overline{w}^{(3)})\delta(z, w).
		\] 
	\end{thm}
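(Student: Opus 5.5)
The plan follows the Dritschel--McCullough scheme (Section 3 of \cite{Drit2007_I}, Section 5 of \cite{Drit2007_II}). First we prove the statement on every finite set $F\subseteq\Hex$, using a Hahn--Banach separation argument with the closed cone $\mathfrak{C}_F$. Then we pass from finite sets to all of $\Hex$ by a compactness (Kurosh/Kolmogorov-type) argument.

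\emph{Finite step.} Fix a finite $F\subseteq\Hex$ and suppose $g|_{F\times F}\notin\mathfrak{C}_F$. Since $\mathfrak{C}_F$ is a closed convex cone in the real space of self-adjoint $|F|\times|F|$ matrices, there is a self-adjoint functional $\Lambda$ with $\Lambda(g|_F)<0\le\Lambda(C)$ for all $C\in\mathfrak{C}_F$. Write $\Lambda(A)=\sum_{z,w\in F}A(z,w)\overline{k(z,w)}$, or the trace pairing with a self-adjoint matrix $k$. Every positive semi-definite matrix lies in $\mathfrak{C}_F$, so $k\succcurlyeq0$. The cone $\mathfrak{C}_F$ has nonempty interior, so we may perturb $k$ by a small multiple of the identity. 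This makes $k$ positive definite, in particular $k(z,z)\ne0$, while keeping $\Lambda(g)<0$.

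Next we test $\Lambda$ on special elements of the cone.
\begin{itemize}
\item Taking $\delta=vv^*$ shows $(1-z^{(3)}\overline{w}^{(3)})k\succcurlyeq0$ on $F$.
\item Taking $\nabla(z,w)=c_z\overline{c_w}\,\delta_{\al}$, where $\delta_\al$ is the point mass at $\al\in\DC$, shows $(1-e(z)(\al)\overline{e(w)(\al)})k\succcurlyeq0$.
\item Similarly, point masses at $(\al_1,\al_2)\in\DC^2$ in $\xi$ give condition (3).
\end{itemize}
Thus $k$ is admissible on $F$ (a Schur-product and transposition check is needed to put the positivity in the right orientation).

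We then extend $k$ to an admissible kernel on all of $\Hex$: set $k$ to be $0$ off $F\times F$, and add $\epsilon\,\delta_{z,w}$ on the diagonal outside $F$ so that $k(z,z)\ne0$. The diagonal term is harmless, because multiplying a diagonal kernel by $1-|\phi(z)|^2>0$ stays positive. Then $gk\succcurlyeq0$ by hypothesis, but its diagonal-type pairing gives $\sum g(z,w)\overline{k(z,w)}\ge0$ on $F$, since it is the sum of the entries of a Schur product of positive matrices (pairing $gk$ against the all-ones vector). This contradicts $\Lambda(g)<0$. Hence $g|_F\in\mathfrak{C}_F$, and we get $\xi_F,\nabla_F,\delta_F$.

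\emph{Passage to $\Hex$.} We need uniform bounds on the pieces. Evaluating at $z=w$ gives
\[
g(z,z)\ge \xi_F(z,z)(1-|E(z)|^2)+\nabla_F(z,z)(1-|e(z)|^2)+(1-|z^{(3)}|^2)\delta_F(z,z).
\]
Since $\|E(z)\|_\infty,\|e(z)\|_\infty,|z^{(3)}|<1$, each of $\|\xi_F(z,z)\|$, $\|\nabla_F(z,z)\|$, $\delta_F(z,z)$ is bounded by $g(z,z)$ divided by a positive constant depending only on $z$. Positivity then bounds the off-diagonal terms via Cauchy--Schwarz. Index by the directed set of finite subsets of $\Hex$ and extend each $\xi_F,\nabla_F,\delta_F$ by zero outside $F$. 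The family then lives in a product of weak-$*$ compact balls of $C(K)^*$ and discs (Tychonoff). A cluster point yields $\xi,\nabla,\delta$; positivity and the identity are preserved, since both are pointwise weak-$*$ closed conditions involving fixed continuous test functions $1-E(z)\overline{E(w)}$ and $1-e(z)\overline{e(w)}$.

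The main obstacle is the separation step: checking carefully that the separating functional really produces an admissible kernel, with the correct conjugation and transposition and with $k(z,z)\ne0$. The extension off $F$ also needs care so that admissibility holds globally.
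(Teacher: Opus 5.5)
Your proposal is correct and follows the paper's proof. The shared steps are: Hahn--Banach separation from the closed cone $\mathfrak{C}_F$; reading the separating functional as a (transposed) weakly admissible kernel on $F$ by testing it on rank-one and point-mass elements of $\mathfrak{C}_F$; making the diagonal nonvanishing (you add a positive diagonal, where the paper uses $\epsilon k + C_*$ with $k \in AK(\Hex)$); and passing from finite sets to $\Hex$ by compactness. Your diagonal bounds together with the Tychonoff/weak-$*$ cluster-point argument are simply a spelled-out version of the Theorem 11.5/Kurosh step that the paper only cites.

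One small fix: the perturbation by $\eta I$ is not justified by $\mathfrak{C}_F$ having nonempty interior. It is justified because every element of $\mathfrak{C}_F$ has nonnegative diagonal, so the perturbed functional stays nonnegative on $\mathfrak{C}_F$; equivalently, adding a positive diagonal preserves admissibility.
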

	
	\begin{proof}
		Let $gk \succcurlyeq 0$ for all $k \in AK(\Hex)$, and let $F=\{z_1, \dotsc, z_n\}$ be a subset of $\Hex$. We show that $G=[g(z_i, z_j)]_{i, j=1}^n \in \mathfrak{C}_F$. Let if possible, $G \notin \mathfrak{C}_F$. Since $\mathfrak{C}_F$ is a closed cone with non-empty interior, an application of Hahn-Banach theorem ensures the existence of a linear functional $L$ on $M_{n}(\C)$ such that $L(G)<0$ and $L(M) \geq 0$ for all $M \in \mathfrak{C}_F$. In fact, the functional $L$ can be chosen such that $L(A)=\text{tr}(AC)$ for all self-adjoint matrices $A$, where $C=[C_{ij}]_{i, j=1}^n$ is a fixed self-adjoint matrix. Given complex scalars $c_1, \dotsc, c_n$, we have that $N=[\overline{c}_ic_j] \in \mathfrak{C}_F$ and thus, it follows that
		\[
		L(N)=\overset{n}{\underset{i, j=1}{\sum}}\overline{c}_ic_jC_{ji} \geq 0.
		\]
		Consequently, the map $C_*: F \times F \to \C$ given by $C_*(z_i, z_j)=C_{ji}$ is positive semi-definite and so, $C_*$ is a weak kernel on $F$. Consider the matrices in $\mathfrak{C}_F$ given by
		\begin{align*}
			N_*&=\left[\overline{c}_ic_j(1-z_i^{(3)}\bar{z}_j^{(3)}\right]_{i, j=1}^n, \quad N_\alpha=\left[\overline{c}_ic_j(1-\Psi(\alpha, z_i^{(2)}, z_i^{(3)}, z_i^{(4)})\overline{\Psi(\alpha, z_j^{(2)}, z_j^{(3)}, z_j^{(4)})})\right]_{i, j=1}^n, \\
			N_{\al_1, \al_2}&=\left[\overline{c}_ic_j(1-\psi_{\al_1, \al_2}(z_i)\overline{\psi_{\al_1, \al_2}(z_j)})\right]_{i, j=1}^n
		\end{align*}
		for $\al, \al_1, \al_2 \in \DC$. An application of the positive semi-definiteness of $L(N_*), L(N_\al)$ and $L(N_{\al_1, \al_2})$ yields that $C_*$ is a weakly admissible kernel on $F$. One can define $C_*$ on $\Hex$ by putting $C_*(z, w)=0$ for all $(z, w) \notin F \times F$. For $\epsilon >0$, it is evident that $\epsilon k+C_* \succcurlyeq 0$ for all $k \in AK(\Hex)$. By hypothesis, $g(\epsilon k +C_*) \succcurlyeq 0$ on $\Hex$, and in particular, on $F$. Consequently, $gC_* \succcurlyeq 0$ on $F$. Then
		\[
		L(G)=\text{tr}(GC)=\overset{n}{\underset{i, j=1}{\sum}}g(z_i, z_j)C_*(z_i, z_j) \geq 0,
		\]
		which contradicts the fact that $L(G)<0$. Hence, $G \in \mathfrak{C}_F$ and thus, it follows that 
		there exist $\xi \in C(\overline{\D}^2)^+_F, \nabla \in C(\DC)^+_F$ and $\delta \in \C_F^+$ such that 
		\[
		g(z, w)=\xi(z, w)(1-E(z)\overline{E(w)})+\nabla(z, w)(1-e(z)\overline{e(w)})+(1-z^{(3)}\overline{w}^{(3)})\delta(z, w)
		\]
		for all $z, w \in F$. Thus, we obtain the desired representation of $g$ on every finite subset of $\Hex$. Since a representation on a larger set restricts naturally to any smaller subset, an application of the same reasoning as in Theorem 11.5 of \cite{Agler_McCarthy}, together with Kurosh’s theorem (see \cite{V_Arkh}, Page 7) gives the asserted conclusion.
	\end{proof}
	
	We now define a unitary colligation associated with $\Hex$.
	
	\begin{defn} A function $f: \Hex \to \C$ is said to be associated to a \textit{unitary colligation} if there exist  
		\begin{enumerate}
			\item Hilbert spaces $\HS_1, \HS_2$ and $\HS_3$,
			\item unital $*$-representations $\rho_1: C(\DC^2) \to \mathcal{B}(\HS_1), \rho_2: C(\DC) \to \mathcal{B}(\HS_2)$ and 
			\item a unitary $V=\begin{bmatrix} A & B \\ C & D \end{bmatrix}: \C \oplus \HS \to \C \oplus \HS$, where $\HS=\HS_1 \oplus \HS_2 \oplus \HS_3$
		\end{enumerate}
		such that $f(z)=A+BX(z)(I_\HS-DX(z))^{-1}C$, where
		\[
		X(z)=\begin{bmatrix}
			\rho_1(E(z)) & 0 & 0 \\
			0 & \rho_2(e(z)) & 0 \\
			0 & 0 & z^{(3)}I_{\HS_3}
		\end{bmatrix} \quad (z=(z^{(1)}, z^{(2)}, z^{(3)}, z^{(4)}) \in \Hex).
		\]
		The class of functions $f: \Hex \to \C$ associated to a unitary colligation is denoted by $UC(\Hex)$.
	\end{defn}
	It follows from the definition of $UC(\Hex)$ that $UC(\Hex) \subseteq S(\Hex)$. Furthermore, $\|X(z)\|<1$ since $\rho_1, \rho_2$ are unital $*$-representations and $\|E(z)\|_{\infty, \DC^2}, \|e(z)\|_{\infty, \DC}<1$ for all $z \in \Hex$.
	
	\begin{defn}\label{defn_simple}
		Given a compact set $K \subseteq \C^n$, a unital $*$-representation $\rho: C(K) \to \mathcal{B}(\HS)$ is said to be \textit{simple} if there exist $z_1, \dotsc, z_m \in K$ and orthogonal projections $P_1, \dotsc, P_m \in \mathcal{B}(\HS)$ with $P_1+\dotsc +P_m=I_\HS$ such that 
		$
		\rho(f)=f(z_1)P_1+\dotsc+f(z_m)P_m
		$
		for all $f \in C(K)$. 
	\end{defn}
	
	Whenever the unital $*$-representations in a unitary colligation of $f$ are simple, the following result shows that $f \in UC(\Hex)$ is in $SA(\Hex)$.
	
	\begin{lem}\label{lem_prelim_III_H}
		Let $f \in UC(\Hex)$, and let $\rho_1: C(\DC^2) \to \mathcal{B}(\HS_1), \rho_2: C(\DC) \to \mathcal{B}(\HS_2)$ be the associated unital $*$-representations. If $\rho_1$ and $\rho_2$ are simple representations, then $f \in SA(\Hex)$. 
	\end{lem}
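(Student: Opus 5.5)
The plan is to substitute a tuple $\underline{T}\in\mathfrak{M}_\Hex$, acting on a Hilbert space $\mathcal{K}$, into the transfer-function formula. Simplicity of $\rho_1,\rho_2$ lets us write the operator $X(\underline{T})$ explicitly, and the unitarity of $V$ then gives the standard energy-balance estimate.

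First, write $\rho_1(h)=\sum_{i=1}^m h(\beta_i)P_i$ with $\beta_i=(\beta_i^{(1)},\beta_i^{(2)})\in\DC^2$, and $\rho_2(h)=\sum_{j=1}^{l}h(\gamma_j)Q_j$ with $\gamma_j\in\DC$. Then for $z\in\Hex$,
\[
X(z)=\Big(\sum_i \psi_{\beta_i}(z)P_i\Big)\oplus\Big(\sum_j \Psi(\gamma_j,z^{(2)},z^{(3)},z^{(4)})Q_j\Big)\oplus z^{(3)}I_{\HS_3}.
\]
Each entry is a holomorphic function on $\Hex$: the denominators of $\psi_{\beta_i}$ do not vanish on $\E$, and $|z^{(3)}|<1$ on $\Hex$. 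For $\underline{T}$, define
\[
X(\underline{T})=\Big(\sum_i P_i\otimes\psi_{\beta_i}(\underline{T})\Big)\oplus\Big(\sum_j Q_j\otimes\Psi(\gamma_j,T_2,T_3,T_4)\Big)\oplus (I_{\HS_3}\otimes T_3)
\]
on $\HS\otimes\mathcal{K}$. These operators are well defined by the discussion preceding the definition of $\mathfrak{M}_\Hex$. Because the $P_i$ and the $Q_j$ are mutually orthogonal projections summing to the identity, $X(\underline{T})$ is an orthogonal direct sum of operators of norm $<1$. Only finitely many summands occur, so $\|X(\underline{T})\|<1$. This is the one place where simplicity is essential: for a general representation, the supremum over $\DC^2$ need not be attained uniformly in operator norm, and a direct sum over a continuum would not make sense.

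Second, set
\[
F(\underline{T})=A\otimes I+(B\otimes I)X(\underline{T})\big(I-(D\otimes I)X(\underline{T})\big)^{-1}(C\otimes I).
\]
The inverse exists because $\|D\|\le1$ and $\|X(\underline{T})\|<1$. Since $V\otimes I$ is unitary and $X(\underline{T})$ is a contraction, the usual colligation computation gives
\[
I-F(\underline{T})^*F(\underline{T})=(C\otimes I)^*R^*\big(I-X(\underline{T})^*X(\underline{T})\big)R\,(C\otimes I)\ge0,
\]
where $R=(I-(D\otimes I)X(\underline{T}))^{-1}$. Hence $\|F(\underline{T})\|\le1$. For the strict inequality demanded in the definition of $SA(\Hex)$, I would use $\|X(\underline{T})\|\le r<1$. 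This gives $I-F^*F\ge(1-r^2)(C\otimes I)^*R^*R(C\otimes I)$. If $C=0$, then $f\equiv A$, and $|A|<1$ unless $V$ decouples; if $|A|=1$ the strict bound fails. I expect the intended reading to be $\le1$, but I would handle this edge case explicitly.

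Third, the main obstacle is to show that $F(\underline{T})=f(\underline{T})$ in the Taylor functional calculus. Each entry of $X$ is a holomorphic function on a neighbourhood of $\sigma_T(\underline{T})\subseteq\Hex$, and the calculus is a unital continuous algebra homomorphism $\mathrm{Hol}(\Hex)\to\mathcal{B}(\mathcal{K})$ that respects matrix amplification by finite-dimensional blocks. I would first truncate the Neumann series. On $\Hex$ we have $f(z)=A+\sum_{n\ge0}B\,X(z)(DX(z))^nC$, where $C\in\HS$ and $B\in\HS^*$. For fixed $n$, the scalar function $z\mapsto B X(z)(DX(z))^nC$ is a finite combination of products of the entry functions whenever $\HS$ is replaced by $\mathrm{span}$ of finitely many vectors. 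To avoid dimension issues, I would instead expand in the finitely many blocks $P_i\HS_1,Q_j\HS_2,\HS_3$, whose coefficients are operators between these blocks. Applying the functional calculus entrywise is then a homomorphism on finitely many block-functions, so the calculus of each term equals the corresponding term of $F(\underline{T})$. Finally, uniform convergence on a compact neighbourhood of $\sigma_T(\underline{T})$, together with operator-norm convergence, lets me pass to the limit; continuity of the Taylor calculus in the topology of local uniform convergence yields $f(\underline{T})=F(\underline{T})$.
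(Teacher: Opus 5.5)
Your proposal is correct and follows essentially the same route as the paper. Simplicity makes $X(\underline{T})$ a finite orthogonal sum built from $\psi_{\beta_i}(\underline{T})$, $\Psi(\gamma_j,T_2,T_3,T_4)$ and $T_3$, and unitarity of $V$ then gives $I-f(\underline{T})^*f(\underline{T})=R^*\bigl(I-X(\underline{T})^*X(\underline{T})\bigr)R\ge 0$. You also supply two details the paper only asserts: the strict bound $\|X(\underline{T})\|<1$ (the paper writes $\le 1$, which alone would not make $I-(D\otimes I)X(\underline{T})$ invertible), and the identification of the transfer-function expression with the Taylor functional calculus via the block-expanded Neumann series. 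Your reading of the conclusion as $\|f(\underline{T})\|\le 1$ is the right one, but in your edge-case remark note that $C=0$ forces $|A|=1$ by unitarity (not merely ``unless $V$ decouples''). So unimodular constants show that the strict inequality in the paper's definition of $SA(\Hex)$ cannot be intended.
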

	
	\begin{proof} Let $\underline{T}=(T_1, T_2, T_3, T_4) \in \mathfrak{M}_\Hex$ be defined on a Hilbert space $\mathcal{K}$. Assume that
		\[
		f(z)=A+BX(z)(I_\HS-DX(z))^{-1}C,
		\]
		where $\HS=\HS_1 \oplus \HS_2 \oplus \HS_3$, $V=\begin{bmatrix} A & B \\ C & D \end{bmatrix}: \C \oplus \HS \to \C \oplus \HS$ is a unitary and 
		\[
		X(z)=\begin{bmatrix}
			\rho_1(E(z)) & 0 & 0 \\
			0 & \rho_2(e(z)) & 0 \\
			0 & 0 & z^{(3)}I_{\HS_3}
		\end{bmatrix}
		\]
		for all $z=(z^{(1)}, z^{(2)}, z^{(3)}, z^{(4)}) \in \Hex$. Clearly, $f \in \text{Hol}(\Hex)$. Since $\rho_1$ is simple, there exist points $(\alpha_{1}^{(1)}, \alpha_{1}^{(2)}), \dotsc,  (\alpha_{m}^{(1)}, \alpha_{m}^{(2)}) \in \DC^2$ and orthogonal projections $P_1, \dotsc, P_m \in \mathcal{B}(\HS_1)$ summing to $I_{\HS_1}$. Since $\rho_2$ is also simple, one can find $\beta_1, \dotsc, \beta_\ell \in \DC$ and orthogonal projections $Q_1, \dotsc, Q_\ell \in \mathcal{B}(\HS_2)$ whose sum equals $I_{\HS_2}$. Furthermore, 
		\[
		\rho_1(\widetilde{f})=\overset{m}{\underset{j=1}{\sum}}\widetilde{f}(\alpha_{j}^{(1)}, \alpha_{j}^{(2)})P_j \quad \text{and} \quad \rho_2(g)=\overset{\ell}{\underset{j=1}{\sum}}g(\beta_j)Q_j
		\]
		for all $\widetilde{f} \in C(\DC^2)$ and $g \in C(\DC)$. Therefore, we have by \eqref{eqn_E(z)}that
		\[
		\rho_1(E(z))= \overset{m}{\underset{j=1}{\sum}}P_j \psi_{\al_j^{(1)}, \al_j^{(2)}}(z) \quad \text{and} \quad \rho_2(e(z))=\overset{\ell}{\underset{j=1}{\sum}}Q_j \Psi(\beta_j, z^{(2)}, z^{(3)}, z^{(4)}).
		\]
		It is not difficult to see that $\|X(\underline{T})\|\leq 1$. Using the representation $f(z)=A+BX(z)(I_\HS-DX(z))^{-1}C$ for all $z \in \Hex$, a simple computation shows that 
		\[
		f(\underline{T})=A\otimes I_{\mathcal{K}}+(B\otimes I_{\mathcal{K}})X(\underline{T})\left((I_\HS\otimes I_{\mathcal{K}})-(D\otimes I_{\mathcal{K}})X(\underline{T})\right)^{-1}(C\otimes I_{\mathcal{K}}).
		\]
		A laborious but routine calculation gives that $1-\overline{f(z)}f(z)=C^*(I-DX(z))^{-*}(I-X(z)^*X(z))(I-DX(z))^{-1}C$. For $R(\underline{T})=\left(I_{\HS \otimes \mathcal{K}}-(D\otimes I_{\mathcal{K}})X(\underline{T})\right)^{-1}(C\otimes I_{\mathcal{K}})$, we can write
		\[
		I_{\mathcal{K}}-f(\underline{T})^*f(\underline{T})=R(\underline{T})^*\left(I_{\HS\otimes \mathcal{K}}-X(\underline{T})^*X(\underline{T})\right)R(\underline{T}).
		\]
		Since $\|X(\underline{T})\| \leq 1$, we have $I_\mathcal{K}-f(\underline{T})^*f(\underline{T}) \geq 0$ and so, $\|f(\underline{T})\| \leq 1$. Hence, $f\in SA(\Hex)$. 
	\end{proof}
	
	We now present the realization theorem for functions belonging to the Schur-Agler class  $SA(\Hex)$.
	
	\begin{thm}\label{thm_realization_H}
		Let $f: \Hex \to \C$ be a function. Then the following are equivalent: 
		\begin{enumerate}[leftmargin=*]
			\item[$(1)$] $f \in SA(\Hex)$;
			\item[$(2)$] $(1-f(z)\overline{f(w)})k(z, w) \succcurlyeq 0$ for all $k \in AK(\Hex)$;
			\item[$(3)$] there exist $\xi \in C(\overline{\D}^2)^+_\Hex, \nabla \in C(\DC)^+_\Hex$ and $\delta \in \C_\Hex^+$ such that for all $z, w \in \Hex$,
			\[
			1-f(z)\overline{f(w)}=\xi(z, w)(1-E(z)\overline{E(w)})+\nabla(z, w)(1-e(z)\overline{e(w)})+(1-z^{(3)}\overline{w}^{(3)})\delta(z, w),
			\]
			where $E(z)$ and $e(z)$ are as in \eqref{eqn_E(z)};
			\item[$(4)$] $f \in UC(\Hex)$.
		\end{enumerate}
	\end{thm}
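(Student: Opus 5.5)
We will prove the cycle $(1)\Rightarrow(2)\Rightarrow(3)\Rightarrow(4)\Rightarrow(1)$, following the test-function scheme of \cite{Drit2007_I, Drit2007_II} and Agler--McCarthy.

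\textbf{$(2)\Rightarrow(3)$.} This is immediate from Theorem \ref{thm_sa_H}. Apply it to the self-adjoint function $g(z,w)=1-f(z)\overline{f(w)}$.

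\textbf{$(3)\Rightarrow(4)$.} We use the standard lurking isometry argument.
\begin{enumerate}
\item Apply Proposition \ref{prop_prelim_I_H} to $\xi$ and to $\nabla$. This gives Hilbert spaces $\HS_1,\HS_2$, maps $L_1,L_2$ and unital $*$-representations $\rho_1,\rho_2$ with
\[
\xi(z,w)(1-E(z)\overline{E(w)})=\langle L_1(z)1,L_1(w)1\rangle-\langle \rho_1(E(z))L_1(z)1,\rho_1(E(w))L_1(w)1\rangle,
\]
and similarly for $\nabla$.
\item Factor $\delta(z,w)=\langle h(z),h(w)\rangle_{\HS_3}$ by Kolmogorov's theorem.
\item Put $u(z)=L_1(z)1\oplus L_2(z)1\oplus h(z)\in\HS=\HS_1\oplus\HS_2\oplus\HS_3$. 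Identity (3) rearranges to
\[
1+\langle X(z)u(z),X(w)u(w)\rangle=f(z)\overline{f(w)}+\langle u(z),u(w)\rangle .
\]
\item This identity says the map $1\oplus X(z)u(z)\mapsto f(z)\oplus u(z)$ extends linearly to an isometry between the closed spans. After enlarging $\HS_3$ (so the added summand is acted on by $z^{(3)}I$ and $\rho_1,\rho_2$ are unaffected), it extends to a unitary $V^*=\begin{bmatrix}A&B\\C&D\end{bmatrix}^*$ or its adjoint, as appropriate.
\item Solving $A+BX(z)u(z)=f(z)$ and $C+DX(z)u(z)=u(z)$ gives $u(z)=(I-DX(z))^{-1}C$, since $\|X(z)\|<1$. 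Hence $f(z)=A+BX(z)(I-DX(z))^{-1}C$.
\end{enumerate}

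\textbf{$(1)\Rightarrow(2)$.} First reduce to finite sets: it suffices to show $[(1-f(z_i)\overline{f(z_j)})k(z_i,z_j)]\geq0$ for every finite $F=\{z_1,\dots,z_n\}$ and every $k\in AK(\Hex)$.
\begin{enumerate}
\item By a small perturbation, e.g.\ replacing $k$ by $k+\epsilon\,\delta_0$ or using quasi-balancedness to shrink the points by $r<1$ and then letting $r\to1$, we may assume $[k(z_i,z_j)]$ is strictly positive.
\item Let $\mathcal{K}$ be the span of the kernel functions $k_{z_j}$ in the Hilbert space of the kernel $k$ restricted to $F$. Define $T_m^*k_{z_j}=\overline{z_j^{(m)}}k_{z_j}$; these commute.
\item For a polynomial or rational $\phi$, admissibility condition (c) with $\phi=\psi_{\al_1,\al_2}$ says exactly $\|\phi(\underline{T})^*\|\le1$, because $\phi(\underline{T})^*k_{z}=\overline{\phi(z)}k_z$. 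Likewise (a) and (b) give the bounds on $T_3$ and on $\Psi(\al,\cdot)$.
\item These are non-strict inequalities, so we pass to $(rT_1,rT_2,rT_3,r^2T_4)$. The hard part is to show that such an $r<1$ makes all three norm bounds strict, uniformly over compact $\DC^2$; this uses that $F$ is finite and compact inside $\Hex$. By property (2) of $\mathfrak{M}_\Hex$ the adjoint tuple then lies in $\mathfrak{M}_\Hex$.
\item Then $\|f(r\cdot\underline{T})\|\le1$ and $f(r\cdot\underline T)^*k_{z}=\overline{f(r\cdot z)}k_z$, which gives positivity of $(1-f(r z_i)\overline{f(r z_j)})k(z_i,z_j)$. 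Let $r\to1$.
\end{enumerate}
I expect this uniform strictness step to be the main obstacle. I would first try to handle it by continuity in $(\al_1,\al_2)$ together with the $\sqrt{(1-|\al_1|^2)(1-|\al_2|^2)}$ factor vanishing at the boundary.

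\textbf{$(4)\Rightarrow(1)$.} Lemma \ref{lem_prelim_III_H} handles only simple representations, so we approximate. A unital $*$-representation of $C(K)$ is a norm-limit, uniformly on the relevant compact families $\{E(z)\}$, of simple ones obtained by partitioning $K$ into small Borel pieces and using the spectral measure. For fixed $\underline{T}\in\mathfrak{M}_\Hex$, its spectrum is compact in $\Hex$, so the functional calculus is an integral over a compact set where $\|X(z)\|\le c<1$. Hence $f_n(\underline T)\to f(\underline T)$, where $f_n$ is built from the same $V$ and simple $\rho_1^{(n)},\rho_2^{(n)}$. Lemma \ref{lem_prelim_III_H} then gives $\|f(\underline T)\|\le1$.

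Alternatively, one can avoid approximation: the identity $I-f(\underline T)^*f(\underline T)=R^*(I-X(\underline T)^*X(\underline T))R$ persists if one shows $\|\rho_1(E)(\underline T)\|\le1$ directly via the spectral theorem and the hypothesis $\|\psi_{\al_1,\al_2}(\underline T)\|<1$ for each $(\al_1,\al_2)$.
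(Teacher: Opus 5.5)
\textbf{A gap in $(1)\Rightarrow(2)$.} Your cycle and your arguments for $(2)\Rightarrow(3)$ and $(3)\Rightarrow(4)$ are the paper's. Enlarging $\HS_3$, where $X(z)$ acts by $z^{(3)}$, is a sound way to equalize defect dimensions before extending the isometry. The genuine hole is the step you flag yourself: you never show that $r\cdot\underline{T}=(rT_1,rT_2,rT_3,r^2T_4)$ lies in $\mathfrak{M}_\Hex$, and the route you sketch would not get there.
\begin{itemize}
\item Continuity in $(\al_1,\al_2)$, together with the square-root factor vanishing on the boundary of $\DC^2$, only shows that $\sup_{(\al_1,\al_2)}\|\psi_{\al_1,\al_2}(r\cdot\underline{T})\|$ is attained. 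It does not show that it is less than $1$.
\item Finiteness of $F\subset\Hex$ only controls the scalars $\psi_{\al_1,\al_2}(r\cdot z_j)$. These do not bound the operator norm, because $\underline{T}$ is diagonalizable but in general not normal (the $k_{z_j}$ are not orthogonal).
\end{itemize}
What is missing is a relation between the scaled and unscaled tuples. It is given by the direct computations
\[
\Psi(\al, rT_2, rT_3, r^2T_4)=r\,\Psi(r\al, T_2, T_3, T_4),\qquad \psi_{\al_1,\al_2}(r\cdot\underline{T})=r\sqrt{\frac{(1-|\al_1|^2)(1-|\al_2|^2)}{(1-r^2|\al_1|^2)(1-r^2|\al_2|^2)}}\;\psi_{r\al_1,r\al_2}(\underline{T}).
\]
Here $r\al\in\DC$, $(r\al_1,r\al_2)\in\DC^2$, and the square-root factor is at most $1$. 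So the non-strict bounds you get from admissibility give $\|rT_3\|\le r$, $\|\Psi(\al,r\cdot\underline{T})\|\le r$ and $\|\psi_{\al_1,\al_2}(r\cdot\underline{T})\|\le r$ for all parameters. Hence $r\cdot\underline{T}\in\mathfrak{M}_\Hex$ with a uniform margin $1-r$, and finiteness of $F$ plays no role. The paper itself only asserts this step as ``not difficult to see''.

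\textbf{The rest, and how it compares with the paper.} With that identity inserted, the proposal is correct, and in two places it is more direct than the paper.
\begin{itemize}
\item In $(1)\Rightarrow(2)$ you apply $f$ itself to the jointly diagonalizable tuple $r\cdot\underline{T}$, whose spectrum $\{r\cdot z_j\}$ lies in $\Hex$. So $f(r\cdot\underline{T})^*k_{z_j}=\overline{f(r\cdot z_j)}\,k_{z_j}$ is immediate. The paper instead first treats $f\in\mathrm{Hol}(\overline{\Hex})$ through Oka--Weil and polynomial convexity of $\overline{\Hex}$, and only then passes to $f_r$.
\item In $(4)\Rightarrow(1)$ you partition the parameter spaces $\DC^2$ and $\DC$ into Borel sets of small diameter. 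For compact $C\subset\Hex$, the maps $(z,\al)\mapsto E(z)(\al)$ and $(z,\al)\mapsto e(z)(\al)$ are uniformly continuous on $C\times\DC^2$ and $C\times\DC$. Simple representations are contractive, so the operators $X_n(z)$ built from $\rho_1^{(n)},\rho_2^{(n)}$ satisfy $\|X_n(z)\|\le\max\{\|E(z)\|_{\infty,\DC^2},\|e(z)\|_{\infty,\DC},|z^{(3)}|\}$, which is bounded away from $1$ on $C$. This yields $f_n\to f$ uniformly on compacta, hence $f_n(\underline{T})\to f(\underline{T})$ in norm. The paper instead partitions the family of test functions in the pointwise topology of $\DC^{\Hex}$ and indexes a net by finite subsets of $\Hex$. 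It gets only pointwise convergence and needs a normal-families step before passing to the functional calculus.
\end{itemize}

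\textbf{A minor point.} In your reduction to strictly positive $[k(z_i,z_j)]$, only the first option works, and only with $\delta_0$ the diagonal kernel. That kernel is admissible, since each of the three conditions then gives a diagonal matrix with entries $1-|\phi(z)|^2\ge 0$. Shrinking the points does nothing for strict positivity. The reduction is dispensable anyway: admissibility makes each $M_{z^{(m)}}$ bounded on the space of $k|_F$, so $T_m^*$ can be defined as the restriction of $M_{z^{(m)}}^*$, as in the paper.
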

	
	\begin{proof}
		We have divided the proof into several steps for a clear understanding of the readers.
		
		\medskip 
		
		\noindent $(1) \implies (2)$. Let $k \in AK(\Hex)$. We first prove this implication for functions in $SA(\Hex) \cap \text{Hol}(\overline{\Hex})$. We then show that every function in $SA(\Hex)$ can be approximated by a sequence of functions from $SA(\Hex) \cap \text{Hol}(\overline{\Hex})$, which gives the desired conclusion. Let $f \in SA(\Hex) \cap \text{Hol}(\overline{\Hex})$. Let $M_z=\left(M_{z^{(1)}}, M_{z^{(2)}}, M_{z^{(3)}}, M_{z^{(4)}}\right)$ be the quadruple of operators of multiplication by the coordinate functions $z^{(j)}$ for $1 \leq j \leq 4$ on the reproducing kernel Hilbert space $\HS(k)$ determined by the kernel $k$. It is not difficult to see that each $M_{z^{(j)}}$ is bounded, and
		\begin{equation}\label{eqn_RH_001}
			\|M_{z^{(3)}}\|\leq 1, \quad \|\Psi(\alpha, M_{z^{(2)}}, M_{z^{(3)}}, M_{z^{(4)}})\| \leq 1 \quad \text{and} \quad \|\psi_{\alpha_1, \alpha_2}(M_{z^{(1)}}, M_{z^{(2)}}, M_{z^{(3)}}, M_{z^{(4)}})\| \leq 1
		\end{equation}
		for all $\alpha, \alpha_1, \alpha_2 \in \DC$. Let $\{z_1, \dotsc, z_m\} \subseteq \Hex$ and let $\HS_m(k)$ be the finite dimensional Hilbert space spanned by 
		$\{k(., z_1), \dotsc, k(., z_m)\}$. Since $M_{z^{(j)}}^*k(., z_\ell)=\overline{z}_\ell^{(j)}k(., z_\ell)$ for $1\leq \ell \leq m$ and $1 \leq j \leq 4$, it follows that $\HS_m(k)$ is jointly invariant under $\left(M_{z^{(1)}}^*, M_{z^{(2)}}^*, M_{z^{(3)}}^*, M_{z^{(4)}}^*\right)$. Let us define the quadruple $\underline{T}=(T_1, T_2, T_3, T_4)$ on $\HS_m(k)$ with each $T_j^*=M_{z^{(j)}}^*|_{\HS_m(k)}$. We have by \eqref{eqn_RH_001} that
		\[
		\|T_3^*\|\leq 1, \quad \|\Psi(\alpha, T_2^*, T_3^*, T_4^*)\| \leq 1 \quad \text{and} \quad \|\psi_{\alpha_1, \alpha_2}(T_1^*, T_2^*, T_3^*, T_4^*)\| \leq 1
		\]
		for all $\alpha, \alpha_1, \alpha_2 \in \DC$. Since $\Hex$ is $(1, 1, 1, 2)$-quasi-balanced, it is not difficult to see that 
		\[
		\|rT_3^*\|< 1, \quad \|\Psi(\alpha, rT_2^*, rT_3^*, r^2T_4^*)\| < 1 \quad \text{and} \quad \|\psi_{\alpha_1, \alpha_2}(rT_1^*, rT_2^*, rT_3^*, r^2T_4^*)\| < 1
		\]
		for all $\alpha, \alpha_1, \alpha_2 \in \DC$ and $0 <r<1$. Consequently, $r\cdot \underline{T}^*=(rT_1^*, rT_2^*, rT_3^*, r^2T_4^*) \in \mathfrak{M}_\Hex$, where $\underline{T}^*=(T_1^*, T_2^*, T_3^*, T_4^*)$. Consider the function given by $\widehat{f}(z)=\overline{f(\bar{z})}$, which is holomorphic on $\overline{\Hex}$ since $f \in \text{Hol}(\overline{\Hex})$. Since $f \in SA(\Hex)$, we have that
		\[
		\|\widehat{f}(S_1 ,S_2, S_3, S_4)\|=\|f(S_1^*, S_2^*, S_3^*, S_4^*)^*\|=\|f(S_1^*, S_2^*, S_3^*, S_4^*)\| \leq 1
		\]
		for every $(S_1, S_2, S_3, S_4) \in \mathfrak{M}_{\Hex}$ and so, $\widehat{f} \in SA(\Hex) \cap \text{Hol}(\overline{\Hex})$. Since $\overline{\Hex}$ is polynomially convex (see Theorem 6.7 in \cite{Biswas}), Oka-Weil theorem ensures a sequence of polynomials $\{p_n\}_{n \in \N}$ such that $\underset{n \to \infty}{\lim}\|p_n-f\|_{\infty, \overline{\Hex}}=0$. Evidently,  $\{\widehat{p}_n\}_{n \in \N}$ converges uniformly over $\overline{\Hex}$ to $\widehat{f}$. For each $z_j=(z_j^{(1)}, z_j^{(2)}, z_j^{(3)}, z_j^{(4)})$ and $r \in (0, 1)$, let $r.z_j=(rz_j^{(1)}, rz_j^{(2)}, rz_j^{(3)}, r^2z_j^{(4)})$. Take $c_1, \dotsc, c_m \in \C$. Then 
		\begin{small} 
			\begin{align*}
				\left\|\overset{m}{\underset{i, j=1}{\sum}}c_j\overline{f(r\cdot z_j)}k(., z_j)\right\|_{\HS_m(k)}^2 
				=\lim_{n \to \infty}\left\|\widehat{p_n}(r.\underline{T}^*)\left(\overset{m}{\underset{i, j=1}{\sum}}c_jk(., z_j)\right)\right\|_{\HS_m(k)}^2
				&=\left\|\widehat{f}(r.\underline{T}^*)\left(\overset{m}{\underset{i, j=1}{\sum}}c_jk(., z_j)\right)\right\|^2_{\HS_m(k)} \\
				& \leq \left\|\overset{m}{\underset{i, j=1}{\sum}}c_jk(., z_j)\right\|^2_{\HS_m(k)}, 
			\end{align*}
		\end{small}
		where the last inequality holds, because $\widehat{f} \in SA(\Hex)$ and $r\cdot \underline{T}^* \in \mathfrak{M}_\Hex$ for $0<r<1$. Letting $r \uparrow 1$ in the above inequality, it follows that 
		$\displaystyle 
		\overset{m}{\underset{i, j=1}{\sum}}\overline{c}_ic_j\left(1-f(z_i)\overline{f(z_j)}\right)k(z_i, z_j) \geq 0.
		$
		Thus, $(1-f(z)\overline{f(w)})k(z, w) \succcurlyeq 0$ for all $k \in AK(\Hex)$ and $f \in SA(\Hex) \cap \text{Hol}(\overline{\Hex})$. Now, let us consider $f \in SA(\Hex)$ and let $0 <r<1$. Following the proof of Proposition 6.9 in \cite{Biswas}, it follows that $r\cdot z \in \Hex$ for all $z \in \overline{\Hex}$. Consider the map $f_r: \overline{\Hex} \to \C$ defined as $f_r(z^{(1)}, z^{(2)}, z^{(3)}, z^{(4)})=f(rz^{(1)}, rz^{(2)}, rz^{(3)}, r^2z^{(4)})$. Evidently, the map $f_r \in \text{Hol}(\overline{\Hex})$. Furthermore, $f_r \in SA(\Hex)$ since $f \in SA(\Hex)$ and $\Hex$ is a $(1, 1, 1, 2)$-quasi-balanced domain. Therefore, $(1-f_r(z)\overline{f_r(w)})k(z, w) \succcurlyeq 0$ for all $k \in AK(\Hex)$ and $0<r<1$. Letting $r\uparrow 1$ gives the desired conclusion.
		
		\medskip 
		
		\noindent $(2) \implies (3)$. Suppose $(1-f(z)\overline{f(w)})k(z, w) \succcurlyeq 0$ for all $k \in AK(\Hex)$. Consider the map $g: \Hex \times \Hex \to \C$ defined as $g(z, w)=1-f(z)\overline{f(w)}$.	Evidently, $g$ is a self-adjoint function such that $gk \succcurlyeq 0$ for all $k \in AK(\Hex)$. The desired conclusion now follows from Theorem \ref{thm_sa_H}.
		
		\medskip 
		
		\noindent $(3) \implies (4)$. Suppose there exist $\xi \in C(\overline{\D}^2)^+_\Hex, \nabla \in C(\DC)^+_\Hex$ and $\delta \in \C_\Hex^+$ such that for all $z, w \in \Hex$,
		\begin{align}\label{eqn_RH_002}
			1-f(z)\overline{f(w)}=\xi(z, w)(1-E(z)\overline{E(w)})+\nabla(z, w)(1-e(z)\overline{e(w)})+(1-z^{(3)}\overline{w}^{(3)})\delta(z, w).
		\end{align}
		By Proposition \ref{prop_prelim_I_H}, there exist Hilbert spaces $\HS_1, \HS_2$ and functions $L_1: \Hex \to \mathcal{B}(C(\DC^2),\HS_1)$ and $L_2: \Hex \to \mathcal{B}(C(\DC), \HS_2)$ satisfying 
		\[
		\xi(z, w)(f\overline{h})=\langle L_1(z)f, L_1(w)h \rangle_{\HS_1}\quad  \text{and} \quad \nabla(z, w)(u\overline{v})=\langle L_2(z)u, L_2(w)v \rangle_{\HS_2}
		\]
		for all $(z, w) \in \Hex \times \Hex, f, h \in C(\DC^2)$ and $u, v  \in C(\DC)$. Also, there exist unital $*$-representations $\rho_1: C(\DC^2) \to \mathcal{B}(\HS_1)$ and $\rho_2: C(\DC) \to \mathcal{B}(\HS_2)$ such that 
		\begin{align}\label{eqn_RH_003}
			L_1(z)(fh)=\rho_1(f)L_1(z)(h) \quad \text{and} \quad L_2(z)(uv)=\rho_2(u)L_2(z)(v)
		\end{align}
		for all $z \in \Hex, \{f, h\} \subseteq C(\DC^2)$ and $\{u, v\} \subseteq  C(\DC)$. Since $\delta$ is a weak kernel on $\Hex \times \Hex$, we have by Theorem 2.53 in \cite{Agler_McCarthy} that there exist a Hilbert space $\HS_3$ and a function $g: \Hex \to \HS_3$ such that $\delta(z, w)=\langle g(z), g(w) \rangle_{\HS_3}$ for all $(z, w) \in \Hex \times \Hex$. Hence, \eqref{eqn_RH_002} can be re-written as
		\begin{align*}
			1-f(z)\overline{f(w)}
			&=\langle L_1(z)1, L_1(w)1 \rangle_{\HS_1}-\langle L_1(z)E(z), L_1(w)E(w) \rangle_{\HS_1}+\langle L_2(z)1, L_2(w)1 \rangle_{\HS_2}\\
			& \quad -\langle L_2(z)e(z), L_2(w)e(w) \rangle_{\HS_2} +\langle g(z), g(w) \rangle_{\HS_3}-\langle z^{(3)}g(z), w^{(3)}g(w) \rangle_{\HS_3}
		\end{align*}
		for all $z, w \in \Hex$. Let $\HS=\HS_1\oplus \HS_2 \oplus \HS_3$. We have by \eqref{eqn_RH_003} for all $z, w \in \Hex$ that
		\begin{align}\label{eqn_RH_004}
			& 1+\left\langle X(z) \begin{bmatrix} L_1(z)1 \\ L_2(z)1 \\ g(z)\end{bmatrix}, X(w)\begin{bmatrix} L_1(w)1 \\ L_2(w)1 \\ g(w)\end{bmatrix} \right\rangle_{\HS} =f(z)\overline{f(w)}+\left\langle  \begin{bmatrix} L_1(z)1 \\ L_2(z)1 \\ g(z)\end{bmatrix}, \begin{bmatrix} L_1(w)1 \\ L_2(w)1 \\ g(w)\end{bmatrix} \right\rangle_{\HS}, 
		\end{align}
		where $X(z)=\begin{bmatrix} \rho_1(E(z)) & 0 & 0\\ 
			0 & \rho_2(e(z)) & 0\\
			0 & 0 & z^{(3)}I_{\HS_3}
		\end{bmatrix}$. Consider the subspaces of $\C \oplus \HS$ given by
		\begin{align*}
			\HS^{(1)}
			= \overline{\text{span}} \left\{
			\begin{bmatrix}
				1 \\
				X(z)
				\begin{bmatrix}
					L_1(z)1 \\
					L_2(z)1 \\
					g(z)
				\end{bmatrix}
			\end{bmatrix}
			: z \in \Hex
			\right\} \quad \text{and} \quad \HS^{(2)}
			= \overline{\text{span}} \left\{
			\begin{bmatrix}
				f(z) \\
				\begin{bmatrix}
					L_1(z)1 \\
					L_2(z)1 \\
					g(z)
				\end{bmatrix}
			\end{bmatrix}
			: z \in \Hex
			\right\}.
		\end{align*}
		It follows from \eqref{eqn_RH_004} that the linear operator $V: \HS^{(1)} \to \HS^{(2)}$ defined as 
		\[
		V\begin{bmatrix}
			1 \\
			X(z)
			\begin{bmatrix}
				L_1(z)1 \\
				L_2(z)1 \\
				g(z)
			\end{bmatrix}
		\end{bmatrix}
		=\begin{bmatrix}
			f(z) \\
			\begin{bmatrix}
				L_1(z)1 \\
				L_2(z)1 \\
				g(z)
			\end{bmatrix}
		\end{bmatrix}
		\]
		is an isometry. Adding an infinite-dimensional summand to $\HS^{(1)}$, if necessary, $V$ can be extended to a unitary from $\C \oplus \HS$ onto $\C \oplus \HS$ (see Section 11.3 in \cite{Agler_McCarthy}). With respect to the decomposition $\C \oplus \HS$, we can write $V=\begin{bmatrix} A & B \\ C & D \end{bmatrix}$. By definition of $V$, it then follows that 
		\begin{align}\label{eqn_RH_005}
			f(z)=A1+BX(z)\begin{bmatrix}
				L_1(z)1 \\
				L_2(z)1 \\
				g(z)
			\end{bmatrix} \quad \text{and} \quad C1+DX(z)\begin{bmatrix}
				L_1(z)1 \\
				L_2(z)1 \\
				g(z)
			\end{bmatrix}=\begin{bmatrix}
				L_1(z)1 \\
				L_2(z)1 \\
				g(z)
			\end{bmatrix}.
		\end{align}
		Since $\|X(z)\|<1$ and $\|D\| \leq 1$, it follows from \eqref{eqn_RH_005} that $\begin{bmatrix}
			L_1(z)1 \\
			L_2(z)1 \\
			g(z)
		\end{bmatrix}=(I_\HS-DX(z))^{-1}C1$ and so, $f(z)=A+BX(z)(I_\HS-DX(z))^{-1}C$. Hence, $f \in UC(\Hex)$.  
		
		\medskip 
		
		\noindent $(4) \implies (1)$. The proof follows the same line of argument as that of Proposition 3.2 in \cite{Drit2007_I}. We have already established this part for functions in $UC(\Hex)$ associated with simple representations in Lemma \ref{lem_prelim_III_H}. We now show that an arbitrary function $f \in UC(\Hex)$ can be approximated pointwise by a net $\{f_\beta\} \subseteq UC(\Hex)$ consisting of functions associated with simple representations, from which the desired conclusion follows. From here onwards, the proof is divided into further steps for the better understanding of the reader.
		
		\smallskip 
		
		\noindent{\textit{Step (1).}} Let $f \in UC(\Hex)$. Then there exist associated unital $*$-representations $\rho_1: C(\DC^2) \to \mathcal{B}(\HS_1)$ and $\rho_2: C(\DC) \to \mathcal{B}(\HS_2)$ together with a Hilbert space $\HS_3$ such that 
		\[
		f(z)=A+BX(z)(I_{\HS}-DX(z))^{-1}C, \quad \text{where} \quad X(z)=\begin{bmatrix}
			\rho_1(E(z)) & 0 & 0 \\
			0 & \rho_2(e(z)) & 0 \\
			0 & 0 & z^{(3)}I_{\HS_3}
		\end{bmatrix}
		\]
		and $\HS=\HS_1\oplus \HS_2 \oplus \HS_3$, and $V=\begin{bmatrix} A & B \\ C & D\end{bmatrix}$ is a unitary on $\C \oplus \HS$. By representation theorem for spectral measures, there exist unique $\mathcal{B}(\HS_1)$-valued and $\mathcal{B}(\HS_2)$-valued spectral measures $\mu_1$ and $\mu_2$ on the Borel $\sigma$-algebra of $\DC^2$ and $\DC$, respectively, such that
		\begin{align}\label{eqn_RH_006}
			\rho_1(h)=\int_{\DC^2}h(\al^{(1)}, \al^{(2)})d\mu_1 \quad \text{and} \quad \rho_2(g)=\int_{\DC}g(\al)d\mu_2
		\end{align}
		for all $h \in C(\DC^2)$ and $g \in C(\DC)$.
		\smallskip 
		
		\noindent{\textit{Step (2).}}  In this step, using the above representations of $\rho_1$ and $\rho_2$, we construct a net with the desired properties. Consider a collection $\mathfrak{F}$ consisting of pairs $\beta=(F, \epsilon)$, where $F$ is a finite subset of $\Hex$ and $\epsilon >0$ ordered by $(F_1, \epsilon_1)\leq (F_2, \epsilon_2)$ if $F_1 \subseteq F_2$ and $\epsilon_1 \geq \epsilon_2$. This makes $\mathfrak{F}$ a directed set. Let $\beta=(F, \epsilon) \in \mathfrak{F}$ and let us consider the collection given by
		\[
		\Lambda=\left\{\psi_{\alpha_1, \alpha_2}, \chi_\alpha, \pi_3 : (\alpha_1, \alpha_2) \in \DC^2, \alpha \in \DC \right\},
		\]
		where $\chi_\alpha, \pi_3: \Hex \to \C$ are given by 
		\[
		\chi_{\alpha}(z^{(1)}, z^{(2)}, z^{(3)}, z^{(4)})=\Psi(\alpha, z^{(2)}, z^{(3)}, z^{(4)}) \quad \text{and} \quad \pi_3(z^{(1)}, z^{(2)}, z^{(3)}, z^{(4)})=z^{(3)}.
		\]
		Clearly, $\Lambda \subseteq B(\Hex, \DC)$ (or equivalently, $\DC^{\Hex})$, the collection of bounded functions from $\Hex$ into $\DC$, which is endowed with the topology of pointwise convergence. By Tychonov's theorem, $B(\Hex, \DC)$ is a compact Hausdorff space. Now $\DC$ is a Tychonov space in the usual metric topology (that is, points are closed and for any closed set and point
		disjoint from it, there is a continuous bounded function separating the two). Consequently, $\DC^\Hex$ is Tychonov and so, $\Lambda$ is a Tychonov space. Let $\Lambda_1=\left\{\psi_{\alpha_1, \alpha_2}: (\alpha_1, \alpha_2) \in \DC^2 \right\}$ and $\Lambda_2=\left\{ \chi_\alpha : \alpha \in \DC \right\}$.  Consider the maps $\eta_1: \DC^2 \to \Lambda, \eta_2: \DC \to \Lambda$ given by
		$\eta_1(\alpha_1, \alpha_2)=\psi_{\alpha_1, \alpha_2}$ and $\eta_2(\alpha)=\chi_\alpha$. Evidently, $\eta_1, \eta_2$ are continuous maps with $\eta_1(\DC^2)=\Lambda_1$ and $\eta_2(\DC)=\Lambda_2$. Therefore, $\Lambda_1, \Lambda_2$ are compact subsets of $\Lambda$. By compactness of $\Lambda_1$, there exists a finite collection $\mathcal{U}=\{U^{\beta}_1, \dots, U^{\beta}_m\}$ of nonempty open sets in $\Lambda$ which covers $\{\psi_{\alpha_1, \alpha_2} : (\alpha_1, \alpha_2) \in \mathbb{D}^2\}$ with the property that $|\psi'(z) - \psi''(z)| < \varepsilon$ for every $\psi', \psi'' \in U^{\beta}_j$ for $j = 1, \dots, m$ and $z \in F$. We construct a partition $\{\Delta_1^\beta, \dotsc, \Delta_m^\beta\}$ of $\Lambda_1$ from $\mathcal{U}$ as follows:
		\[
		\Delta^{\beta}_1 = U^{\beta}_1, \quad
		\Delta^{\beta}_2 = U^{\beta}_2 \setminus U^{\beta}_1, \quad 
		\dots, \quad 
		\Delta^{\beta}_m = U^{\beta}_m \setminus (U_1^{\beta} \cup \dotsc \cup U_{m-1}^\beta).
		\]
		Evidently, $\{\Delta^{\beta}_1, \dotsc, \Delta^{\beta}_m\}$ consists of mutually disjoint Borel subsets of $\Lambda$ that cover $\Lambda_2$ with the property that for $1 \leq j \leq m$, we have 
		\begin{align}\label{eqn_RH_007}
			|\psi'(z)-\psi''(z)|<\epsilon \quad \text{for every} \quad \psi', \psi'' \in \Delta_j^\beta, \ z \in F.
		\end{align}
		For the compact set $\Lambda_1$, we proceed analogously to the preceding construction and obtain a partition $\{\Omega_1^\beta, \dotsc, \Omega_\ell^\beta\}$ of $\Lambda_1$ into Borel subsets with the following property: for each $1 \leq i \leq \ell$,
		\begin{align}\label{eqn_RH_008}
			|\chi'(z) - \chi''(z)| < \epsilon 
			\quad \text{for all } \chi', \chi'' \in \Omega_i^\beta 
			\text{ and for every } z \in F.
		\end{align}
		Also, consider collections $\{\psi_1^\beta, \dotsc, \psi_m^\beta\}$ and $\{\chi_1^\beta, \dotsc, \chi_\ell^\beta\}$ such that $\psi_j^\beta=\psi_{\alpha_j^{(1)}, \alpha_j^{(2)}} \in \Delta_j^\beta$ for some $(\alpha_j^{(1)}, \alpha_j^{(2)}) \in \DC^2$ and $\chi_i=\chi_{\alpha_i} \in \Omega_i^\beta$ for some $\alpha_i \in \DC$, where $j=1, \dotsc, m$ and $i=1, \dotsc, \ell$. Define the maps $\rho_{1, \beta}: C(\DC^2) \to \mathcal{B}(\HS_1)$ and $\rho_{2, \beta}: C(\DC) \to \mathcal{B}(\HS_2)$ as follows:
		\[
		\rho_{1, \beta}(h)=\overset{m}{\underset{j=1}{\sum}}\mu_1\left(\eta_1^{-1}(\Delta_j^\beta)\right)h(\al_j^{(1)}, \al_j^{(2)}) \quad \text{and} \quad \rho_{2, \beta}(g)=\overset{\ell}{\underset{i=1}{\sum}}\mu_2\left(\eta_2^{-1}(\Omega_i^\beta)\right)g(\al_i).
		\]
		Since $\{\Delta_1^\beta, \dotsc, \Delta_m^\beta\}$ is a partition of $\Lambda_1$ and $\eta_1(\DC^2)=\Lambda_1$, it follows that $\{\eta_1^{-1}(\Delta_1^\beta), \dotsc, \eta_1^{-1}(\Delta_m^\beta)\}$ is a partition of $\DC^2$. Consequently, the operators $\mu_1\left(\eta_1^{-1}(\Delta_j^\beta)\right)$ are pairwise orthogonal projections for $1 \leq j \leq m$ such that $\overset{m}{\underset{j=1}{\sum}}\mu_1\left(\eta_1^{-1}(\Delta_j^\beta)\right)=I_{\HS_1}$ and so, $\rho_{1, \beta}$ is a simple representation.  Similarly, one can show that $\rho_{2, \beta}$ is a simple representation. It follows from \eqref{eqn_RH_007} and \eqref{eqn_RH_008} that 
		\[
		\|\rho_{1, \beta}(E(z))-\rho_1(E(z))\| \leq \epsilon \quad \text{and} \quad  \|\rho_{2, \beta}(e(z))-\rho_2(e(z))\|\leq \epsilon
		\]
		for every $z \in F$. Consider the function $f_\beta: \Hex \to \C$ given by 
		\[
		f_\beta(z)=A+BX_\beta(z)(I_\HS-DX_\beta(z))^{-1}C, \quad \text{where} \quad X_\beta(z)=\begin{bmatrix}
			\rho_{1, \beta}(E(z)) & 0 & 0 \\
			0 & \rho_{2,\beta}(e(z)) & 0 \\
			0 & 0 & z^{(3)}I_{\HS_3}
		\end{bmatrix}.
		\]
		Putting everything together, we have constructed a net $\{f_\beta:\beta \in \mathfrak{F}\}$ of functions in $UC(\Hex)$ associated with simple representations. We have by Lemma \ref{lem_prelim_III_H} that each of these $f_\beta \in SA(\Hex)$.
		
		\smallskip
		
		\noindent{\textit{Step (3).}} Here, we prove that the net $\{f_\beta\}$ converges pointwise to $f$ on $\Hex$. Let $z \in \Hex$ and $\epsilon'>0$. Choose a finite set $F$ in $\Hex$ such that $z \in F$ and consider $\beta'=(F, \epsilon') \in \mathfrak{F}$. Since $\|E(w)\|_{\infty, \DC^2}<1, \|e(w)\|_{\infty, \DC}<1$ for all $w \in \Hex$, we can define positive scalars $\delta, r_1, r_2$ and $\epsilon$ as follows:
		\[
		\delta=\underset{w \in F}{\min}\left\{\frac{1-\|E(w)\|_{\infty, \DC^2}}{2}, \frac{1-\|e(w)\|_{\infty, \DC}}{2}\right\}, \ \ r_1=1-\frac{\delta}{2}, \ \ r_2=\underset{w \in F}{\max}|w^{(3)}| \ \ \text{and} \ \ \epsilon<\min\{\delta\slash 2, \epsilon'\}.
		\]
		Choose $\beta=(F, \epsilon) \in \mathfrak{F}$. By the definition of $\mathfrak{F}$, it follows that $\beta' \leq \beta$. Since $\rho_1, \rho_2$ are unital $*$-representations, we have by Step (2) that for each $w \in F$, $\|X_\beta(w)-X(w)\| \leq \epsilon$ together with
		\begin{align*}
			& \|\rho_{1, \beta}(E(w))\| \leq \|\rho_{1, \beta}(E(w))-\rho_1(E(w))\|+\|\rho_1(E(w))\| \leq \epsilon +1-2\delta<r_1, \ \text{and} \\
			& \|\rho_{2, \beta}(e(w))\| \leq \|\rho_{2, \beta}(e(w))-\rho_2(e(w))\|+\|\rho_2(e(w))\| \leq \epsilon +1-2\delta<r_1.
		\end{align*}
		Consequently, $\|X_\beta(w)\| =\max\{\|\rho_{1, \beta}(E(w))\|, \|\rho_{2, \beta}(e(w)), |w^{(3)}|: w\in F \} \leq r$ for all $w \in F$, where $r=\max\{r_1, r_2\}$. Since $D$ is a contraction, we have that $\|DX_\beta{(w)}\| \leq r$ for all $w \in F$. Let $w \in F$. Note that 
		\begin{small}
			\begin{align*}
				(DX_\beta(w))^n-(DX(w))^n&=D(X_\beta(w)-X(w))(DX_\beta(w))^{n-1}+(DX(w))(X_\beta(w)-X(w))(DX_\beta(w))^{n-2}\\
				&\quad +\dotsc+(DX(w))^{n-1}D(X_\beta(w)-X(w)) 
			\end{align*}
		\end{small}
		and so, $\|(DX_\beta(w))^n-(DX(w))^n\| \leq n\epsilon r^{n-1}$. For all $w \in F$, we have that
		\begin{align*}
			& \|X_\beta(w)(I_\HS-DX_\beta(w))^{-1}-X(w)(I_\HS-DX(w))^{-1}\|\\
			& \leq \|X_\beta(w)-X(w)\| \|(I_\HS-DX_\beta(w))^{-1}\|+\|X(w)\| \|(I_\HS-DX_\beta(w))^{-1}-(I_\HS-DX(w))^{-1}\|\\
			& \leq \frac{\epsilon}{(1-r)^2}.
		\end{align*}
		It is now easy to see that the bounded net $\{f_\beta\}$ converges pointwise to $f$ on $\Hex$. 
		
		\smallskip
		
		\noindent \textit{Step (4).} Since the net $\{f_\beta\}$ is uniformly bounded by $1$, we have by Theorem 1.4.31 in \cite{Scheidemann} that there exists a subsequence $\{f_{\beta_k}\}$ of the net that converges uniformly to $f$. Let $\underline{T} \in \mathfrak{M}_\Hex$, and let $T$ be acting on a Hilbert space $\mathcal{K}$. As proved in Step (2), $f_{\beta_k} \in SA(\Hex)$ and so, $\|f_{\beta_k}(\underline{T})\| \leq 1$ for each $k$. By functional calculus representation by Vasilescu \cite{Vasilescu}, it follows that 
		\[
		f_{\beta_k}(\underline{T})=\frac{1}{(2\pi i)^4}\int_{\partial \Omega} M_{\underline{T}}(z)f_{\beta_k}(z)dz,
		\]
		where $\Omega$ is an open set in $\C^4$ containing $\sigma_T(\underline{T})$ with $C^1$-boundary, $\overline{\Omega} \subseteq \Hex$ and $M_{\underline{T}}(z)$ is the Martinelli kernel corresponding to the quadruple $\underline{T}$. For each $x, y \in \mathcal{K}$, consider $d\mu(z)=\langle M_{\underline{T}}(z)x, y\rangle dz$, which gives a measure on the Borel subsets of $\partial \Omega$. By dominated convergence theorem, we have
		\[
		\lim_{k \to \infty}\langle f_{\beta_k}(\underline{T})x, y\rangle_{\mathcal{K}} = \frac{1}{(2\pi i)^4}\int_{\partial \Omega} \lim_{k \to \infty}f_{\beta_k}(z)d\mu(z)=\langle f(\underline{T})x, y\rangle_{\mathcal{K}} 
		\]
		and so, $\|f(\underline{T})\| \leq 1$. Thus, $f \in SA(\Hex)$, which completes the proof. 
	\end{proof}
	
	\subsection*{Interpolation theorem on the hexablock.} Given $\{z_1, \dotsc, z_n\} \subseteq \Hex$ and $\lm_1, \dotsc, \lm_n \in \DC$, the interpolation problem associated with $SA(\Hex)$ asks for necessary and sufficient conditions for the existence of a function $f \in SA(\Hex)$ that maps each $z_i$ to $\lm_i$. As an application of Theorem \ref{thm_realization_H}, various characterizations of such an interpolation problem for $\Hex$ are presented below.
	
	\begin{thm}\label{thm_interpolation_H}
		Let $F=\{z_1, \dotsc, z_n\} \subseteq \Hex$ and $\lm_1, \dotsc, \lm_n \in \DC$. Then the following are equivalent:
		\begin{enumerate}[leftmargin=*]
			\item[$(1)$] there exists a function $f \in SA(\Hex)$ such that $f(z_i)=\lm_i$ for $1 \leq i \leq n$;
			\item[$(2)$] $\begin{bmatrix} (1-\lm_i\overline{\lm_j})k(z_i, z_j)\end{bmatrix}_{i, j=1}^n \geq 0$ for all $k \in AK(\Hex)$;
			\item[$(3)$] there exist $\xi \in C(\overline{\D}^2)^+_F, \nabla \in C(\DC)^+_F$ and $\delta \in \C_F^+$ such that for all $i, j \in \{1, \dotsc, n\}$,
			\[
			1-\lambda_i\overline{\lambda}_j=\xi(z_i, z_j)(1-E(z_i)\overline{E(z_j)})+\nabla(z_i, z_j)(1-e(z_i)\overline{e(z_j)})+(1-z_i^{(3)}\overline{z}_j^{(3)})\delta(z_i, z_j).
			\] 
		\end{enumerate}
	\end{thm}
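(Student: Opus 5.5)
We will prove the cycle $(1)\Rightarrow(2)\Rightarrow(3)\Rightarrow(1)$, reducing everything to Theorem \ref{thm_realization_H} and to the finite-set cone argument from the proof of Theorem \ref{thm_sa_H}.

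For $(1)\Rightarrow(2)$, take $f\in SA(\Hex)$ with $f(z_i)=\lm_i$. By $(1)\Rightarrow(2)$ of Theorem \ref{thm_realization_H}, the function $(1-f(z)\overline{f(w)})k(z,w)$ is positive semi-definite on $\Hex\times\Hex$ for every $k\in AK(\Hex)$. Restricting to $F\times F$ gives exactly the matrix inequality in $(2)$.

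For $(2)\Rightarrow(3)$, set $G=[1-\lm_i\overline{\lm}_j]_{i,j=1}^n$ and show $G\in\mathfrak{C}_F$. If not, the separation argument of Theorem \ref{thm_sa_H} applies verbatim, since $\mathfrak{C}_F$ is a closed cone with nonempty interior. It yields a self-adjoint $C$ with $\mathrm{tr}(GC)<0$ and $\mathrm{tr}(MC)\ge 0$ on $\mathfrak{C}_F$, and hence a weakly admissible weak kernel $C_*$ on $F$. We then need $[(1-\lm_i\overline{\lm}_j)C_*(z_i,z_j)]\ge0$, which contradicts $\mathrm{tr}(GC)<0$.

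The main obstacle is that hypothesis $(2)$ only covers genuine admissible kernels on all of $\Hex$, while $C_*$ is merely weakly admissible on $F$. Extending $C_*$ by zero outside $F\times F$ keeps the three positivity conditions, because zero-extension of a positive semi-definite function on $F\times F$ stays positive semi-definite on $\Hex\times\Hex$. Thus $C_*$ is a weakly admissible weak kernel on $\Hex$. To get a kernel with nonvanishing diagonal, perturb to $C_*+\epsilon k_0$ with a fixed $k_0\in AK(\Hex)$. For $k_0$ I would try the kernel of the constant-function multiplication by $1-z^{(3)}\overline{w}^{(3)}$, or more robustly the kernel $(1-z^{(3)}\overline{w}^{(3)})^{-1}$ times something ensuring the other two conditions. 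A safer fallback is to show $AK(\Hex)$ is nonempty by taking the reproducing kernel of a Hilbert space on which the coordinate multipliers are, after scaling, in the closure of $\mathfrak{M}_\Hex$. Failing that, one can argue directly that if $k(z,z)=0$ occurs, then the matrix condition in $(2)$ still holds for weak kernels by approximation. The sum $C_*+\epsilon k_0$ is admissible because each defining condition is additive in $k$. Applying $(2)$ and letting $\epsilon\to0$ gives positivity for $C_*$, which completes the contradiction. The resulting membership $G\in\mathfrak{C}_F$ is precisely $(3)$ with $\xi\in C(\DC^2)^+_F$, $\nabla\in C(\DC)^+_F$ and $\delta\in\C^+_F$.

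For $(3)\Rightarrow(1)$, follow the lurking-isometry proof of $(3)\Rightarrow(4)$ in Theorem \ref{thm_realization_H} with $\Hex$ replaced by $F$. Proposition \ref{prop_prelim_I_H} (stated for arbitrary $F\subseteq\Hex$) factors $\xi$ and $\nabla$ through $*$-representations $\rho_1,\rho_2$ and maps $L_1,L_2$. Kolmogorov factorization writes $\delta(z_i,z_j)=\langle g(z_i),g(z_j)\rangle$. The identity in $(3)$ then gives an isometry
\[
\begin{bmatrix}1\\ X(z_i)u_i\end{bmatrix}\mapsto\begin{bmatrix}\lm_i\\ u_i\end{bmatrix},\qquad u_i=(L_1(z_i)1,L_2(z_i)1,g(z_i)).
\]
We extend this isometry to a unitary $V=\begin{bmatrix}A&B\\C&D\end{bmatrix}$ and define
\[
f(z)=A+BX(z)(I-DX(z))^{-1}C \quad (z\in\Hex).
\]
This $f$ is defined on all of $\Hex$, since $\rho_1(E(z))$, $\rho_2(e(z))$ and $z^{(3)}$ make sense for every $z$ and $\|X(z)\|<1$. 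It lies in $UC(\Hex)$, and the same computation as in Theorem \ref{thm_realization_H} gives $f(z_i)=\lm_i$. Finally, $(4)\Rightarrow(1)$ of Theorem \ref{thm_realization_H} yields $f\in SA(\Hex)$.
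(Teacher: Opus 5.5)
Your proof follows the paper's route: $(1)\Rightarrow(2)$ by restricting Theorem \ref{thm_realization_H} to $F$; $(2)\Rightarrow(3)$ by the finite-set cone separation from the proof of Theorem \ref{thm_sa_H}, which is the argument actually needed at that step even though the paper's text cites Proposition \ref{prop_prelim_I_H} there; and $(3)\Rightarrow(1)$ by the lurking isometry on $F$ followed by $(4)\Rightarrow(1)$ of Theorem \ref{thm_realization_H}.

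The one loose end is the perturbing kernel $k_0\in AK(\Hex)$, which the paper also uses silently when it writes $\epsilon k+C_*$. Your candidates do not settle it. For instance, $(1-z^{(3)}\overline{w}^{(3)})^{-1}$ violates condition (2) at $z=(0,0,0,0)$ and $w=(0,\tfrac12,0,0)$, where the matrix is $\left[\begin{smallmatrix}1&1\\1&3/4\end{smallmatrix}\right]$. Your ``approximation'' fallback is circular, since it already needs admissible kernels to approximate with.

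The gap is closed by the diagonal kernel $k_0(z,w)=1$ if $z=w$ and $0$ otherwise. Each of the three admissibility conditions then produces a diagonal matrix with entries $1-|\phi(z)|^2$. Here $\phi$ is one of $z^{(3)}$, $\Psi(\alpha,\cdot)$ or $\psi_{\alpha_1,\alpha_2}$, and each of these has modulus $<1$ on $\Hex$ by \eqref{eqn_H}. Hence $C_*+\epsilon k_0\in AK(\Hex)$, and your argument goes through.
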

	
	\begin{proof}
		The implication $(1) \implies (2)$ follows from Theorem \ref{thm_realization_H}. 
		The implication $(2) \implies (3)$ follows from Theorem \ref{prop_prelim_I_H} 
		by applying it to the self-adjoint map $g : F \times F \to \mathbb{C}$ given by 
		$g(z_i, z_j) = 1 - \lm_i \overline{\lm}_j$. It remains to prove that $(3) \implies (1)$. Suppose there exist $\xi \in C(\overline{\D}^2)^+_F, \nabla \in C(\DC)^+_F$ and $\delta \in \C_F^+$ such that for all $i, j=1, \dotsc, n$,
		\begin{align}\label{eqn_IP_002}
			1-\lambda_i\overline{\lambda}_j=\xi(z_i, z_j)(1-E(z_i)\overline{E(z_j)})+\nabla(z_i, z_j)(1-e(z_i)\overline{e(z_j)})+(1-z_i^{(3)}\overline{z}_j^{(3)})\delta(z_i, z_j).
		\end{align} 
		By Proposition \ref{prop_prelim_I_H}, there exist Hilbert spaces $\HS_1, \HS_2$ and functions $L_1: F \to \mathcal{B}(C(\DC^2),\HS_1)$ and $L_2: F \to \mathcal{B}(C(\DC), \HS_2)$ satisfying 
		\[
		\xi(z_i, z_j)(f\overline{h})=\langle L_1(z_i)f, L_1(z_j)h \rangle_{\HS_1}\quad  \text{and} \quad \nabla(z_i, z_j)(u\overline{v})=\langle L_2(z_i)u, L_2(z_j)v \rangle_{\HS_2}
		\]
		for $1 \leq i, j \leq n, \{f, h\} \subseteq C(\DC^2)$ and $\{u, v\}  \subseteq C(\DC)$. Also, there exist unital $*$-representations $\rho_1: C(\DC^2) \to \mathcal{B}(\HS_1)$ and $\rho_2: C(\DC) \to \mathcal{B}(\HS_2)$ such that 
		\begin{align}\label{eqn_IP_003}
			L_1(z_i)(fh)=\rho_1(f)L_1(z_i)(h) \quad \text{and} \quad L_2(z_i)(uv)=\rho_2(u)L_2(z_i)(v)
		\end{align}
		for $1 \leq i, j \leq n, \{f, h\} \subseteq C(\DC^2)$ and $\{u, v\} \subseteq  C(\DC)$. Since $\delta$ is a weak kernel on $F \times F$, we have by Theorem 2.53 in \cite{Agler_McCarthy} that there exists a Hilbert space $\HS_3$ and a function $g: F \to \HS_3$ such that $\delta(z_i, z_j)=\langle g(z_i), g(z_j) \rangle_{\HS_3}$ for $1 \leq i, j \leq n$. Hence, \eqref{eqn_IP_002} can be re-written as
		\begin{align*}
			1-\lm_i \overline{\lm}_j
			&=\langle L_1(z_i)1, L_1(z_j)1 \rangle_{\HS_1}-\langle L_1(z_i)E(z_i), L_1(z_j)E(z_j) \rangle_{\HS_1}+\langle L_2(z_i)1, L_2(z_j)1 \rangle_{\HS_2} \notag \\
			& \quad -\langle L_2(z_i)e(z_i), L_2(z_j)e(z_j) \rangle_{\HS_2} +\langle g(z_i), g(z_j) \rangle_{\HS_3}-\langle z_i^{(3)}g(z_i), z_j^{(3)}g(z_j) \rangle_{\HS_3}
		\end{align*}
		for $1 \leq i, j \leq n$. Let $\HS=\HS_1\oplus \HS_2 \oplus \HS_3$. Consider the subspaces of $\C \oplus \HS$ given by
		\begin{align*}
			\HS^{(1)}
			= \overline{\text{span}} \left\{
			\begin{bmatrix}
				1 \\
				\rho_1(E(z_i))L_1(z_i)1 \\
				\rho_2(e(z_i))L_2(z_i)1 \\
				z_i^{(3)}g(z_i)
			\end{bmatrix}
			: 1 \leq i \leq n
			\right\} \quad \text{and} \quad \HS^{(2)}
			= \overline{\text{span}} \left\{
			\begin{bmatrix}
				\lm_i \\
				L_1(z_i)1 \\
				L_2(z_i)1 \\
				g(z_i)
			\end{bmatrix}
			: 1 \leq i \leq n
			\right\}.
		\end{align*}
		Consequently, the linear operator $V: \HS^{(1)} \to \HS^{(2)}$ defined as 
		\[
		V\begin{bmatrix}
			1 \\
			\rho_1(E(z_i))L_1(z_i)1 \\
			\rho_2(e(z_i))L_2(z_i)1 \\
			z_i^{(3)}g(z_i)
		\end{bmatrix}
		=\begin{bmatrix}
			\lm_i \\
			L_1(z_i)1 \\
			L_2(z_i)1 \\
			g(z_i)
		\end{bmatrix}
		\]
		is an isometry. Now extend $V$ to a unitary on $\C \oplus \HS$. Then one write $V=\begin{bmatrix} A & B \\ C & D \end{bmatrix} : \C \oplus \HS \to \C \oplus \HS$. By Theorem \ref{thm_realization_H}, the map $f: \Hex \to \C$ given by
		\[
		f(z)=A+BX(z)\left(I_\HS-DX(z)\right)^{-1}C, \quad \text{where} \quad X(z)=\begin{bmatrix} \rho_1(E(z)) & 0 & 0\\ 0 & \rho_2(e(z)) & 0 \\ 0 & 0 & z^{(3)} \end{bmatrix}
		\]
		is in $SA(\Hex)$. It is not difficult to see that $f(z_i)=\lambda_i$ for $1 \leq i \leq n$, which completes the proof. 
	\end{proof}

	\subsection*{Extension theorem on the hexablock} 
	Let $W$ be a subset of $\Hex$. Denote by $\mathscr{HE}(W)$ the collection of all bounded functions on $W$ that admits an extension to a holomorphic function in a neighbourhood of $W$. In this section, we study the following extension problem: given a subset $W \subseteq \mathbb{H}$, determine necessary and sufficient conditions under which a function $f \in \mathscr{HE}(W)$ admits a norm-preserving extension to a function $g \in H^\infty(\mathbb{H})$ such that $g \slash \|f\|_{\infty, W}\in SA(\Hex)$. To do so, we introduce some terminologies and definitions. Recall from \eqref{eqn_H} that $z=(z^{(1)}, z^{(2)}, z^{(3)}, z^{(4)}) \in \Hex$ if and only if $|z^{(3)}|<1, |\Psi(\al, z^{(2)}, z^{(3)}, z^{(4)})|<1$ and $|\psi_{\al_1, \al_2}(z^{(1)}, z^{(2)}, z^{(3)}, z^{(4)})|<1$ for all $\al, \al_1, \al_2 \in \DC$, where 
	\[
	\Psi(\alpha, z^{(2)}, z^{(3)}, z^{(4)})=\frac{\alpha z^{(4)}-z^{(2)}}{\alpha z^{(3)}-1} \ \ \text{and} \ \ 	\psi_{\al_1, \al_2}(z^{(1)}, z^{(2)}, z^{(3)}, z^{(4)})=\frac{z^{(1)}\sqrt{(1-|\al_1|^2)(1-|\al_2|^2)}}{1-z^{(2)}\al_1-z^{(3)}\al_2+z^{(4)}\al_1\al_2}.
	\]
	Let $Q\Hex$ be the class of all commuting quadruples $\underline{T}=(T_1, T_2, T_3, T_4)$ of Hilbert space operators with $\sigma_T(\underline{T}) \subseteq \Hex$ such that 
	for all $\al \in \DC$ and $(\al_1, \al_2)\in \DC^2$,
	\[
	\|T_3\| \leq 1, \quad \|\Psi(\al, T_2, T_3, T_4)\| \leq 1 \quad \text{and} \quad \|\psi_{\al_1, \al_2}(T_1, T_2, T_3, T_4)\| \leq 1,
	\]
	where $\Psi(\al, T_2, T_3, T_4)=(\al T_4-T_2)(\al T_3-1)^{-1}$ and
	\[
	\psi_{\al_1, \al_2}(T_1, T_2, T_3, T_4)=\sqrt{(1-|\al_1|^2)(1-|\al_2|^2)}T_1(I-\al_1T_2-\al_2T_3+\al_1\al_2T_4)^{-1}.
	\]
	Following the terminology in \cite{Mittal} for quantized domains in $\C^n$, we refer the class $Q\Hex$ as the \textit{quantum hexablock}. By quantization, we mean the process of replacing scalars with operator variables subject to analogous norm constraints. Recall from Section \ref{sec_hexa} that $\mathfrak{M}_\Hex$ is the set of all commuting quadruples $\underline{T}=(T_1, T_2, T_3, T_4)$ of Hilbert space operators such that 
	for all $\al, \al_1, \al_2 \in \DC$,
	\[
	\|T_3\| < 1, \quad \|\Psi(\al, T_2, T_3, T_4)\| < 1 \quad \text{and} \quad \|\psi_{\al_1, \al_2}(T_1, T_2, T_3, T_4)\| < 1.
	\]
	It was proved in the beginning of Section \ref{sec_hexa} that $\sigma_T(\underline{T}) \subseteq \Hex$ for all $\underline{T} \in \mathfrak{M}_\Hex$. Consequently, it follows that the class $\mathfrak{M}_\Hex$ is contained in the quantum pentablock $Q\Hex$.
	
	\smallskip 
	
	For $W \subseteq \Hex$, we say that a commuting quadruple $\underline{T}=(T_1, T_2, T_3, T_4)$ is \textit{subordinate} to $W$ if $\sigma_T(\underline{T}) \subset W$ and $g(\underline{T})=0$ whenever $g$ is holomorphic in a neighbourhood of $W$ and $g|_W=0$. If $f$ is a function on $W$ admitting a holomorphic extension in a neighbourhood of $W$ and $\underline{T}=(T_1, T_2, T_3, T_4)$ is subordinate to $W$, then define $f(\underline{T})$ by setting $f(\underline{T})=g(\underline{T})$, where $g$ is any holomorphic extension of $f$ in a neighbourhood of $W$. The definition of $f(\underline{T})$ is independent of the choice of the holomorphic extension $g$ of $f$. To see this, let $h$ be any other holomorphic extension of $f$ in some neighbourhood of $W$. Clearly, the function $g-h$ is holomorphic on a neighbourhood of $W$ and $g-h=0$ on $W$. Since $\underline{T}$ is subordinate to $W$, we have that $g(\underline{T})=h(\underline{T})$. Having introduced the required definitions, we first present the statement of the main theorem in this section.
	
	\begin{thm}\label{thm_ext_H}
		Let $W \subseteq \Hex$ and let $f \in \mathscr{HE}(W)$ be non-zero. Then there exists $g \in H^\infty(\Hex)$ such that $\displaystyle \frac{1}{\|f\|_{\infty, W}} g \in SA(\Hex), g|_W=f$ and $\|g\|_{\infty, \Hex}=\|f\|_{\infty, W}$ if and only if 
		\[
		\|f(\underline{T})\| \leq \|f\|_{\infty, W} \quad \text{for every $\underline{T} \in Q\Hex$ subordinate to $W$}.
		\]
	\end{thm}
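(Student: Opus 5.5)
By scaling we may assume $\|f\|_{\infty,W}=1$; both conditions in Theorem \ref{thm_ext_H} are homogeneous, so nothing is lost.

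\emph{Necessity.} Suppose $g\in SA(\Hex)$ with $g|_W=f$. Take $\underline{T}\in Q\Hex$ subordinate to $W$. Since $g$ is holomorphic on $\Hex$, it is a holomorphic extension of $f$ to a neighbourhood of $W$, and so $f(\underline{T})=g(\underline{T})$. It remains to show $\|g(\underline{T})\|\le 1$. Members of $Q\Hex$ satisfy only the non-strict inequalities, so we cannot apply the definition of $SA(\Hex)$ directly; instead we dilate inward. For $0<r<1$ the quadruple $r\cdot\underline{T}=(rT_1,rT_2,rT_3,r^2T_4)$ lies in $\mathfrak{M}_\Hex$. This follows by the quasi-balanced argument already used in the proof of $(1)\Rightarrow(2)$ of Theorem \ref{thm_realization_H}, and it is the step that needs some care. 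Hence $\|g(r\cdot\underline{T})\|\le1$. Because $\sigma_T(\underline{T})\subseteq\Hex$ is compact, the Vasilescu integral representation of the functional calculus, together with uniform convergence of $g(r\cdot z)\to g(z)$ on a compact neighbourhood of $\sigma_T(\underline{T})$, gives $g(r\cdot\underline{T})\to g(\underline{T})$ in norm. Therefore $\|f(\underline{T})\|\le1$.

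\emph{Sufficiency.} Assume $\|f(\underline{T})\|\le1$ for every $\underline{T}\in Q\Hex$ subordinate to $W$. Fix a finite set $F=\{z_1,\dots,z_n\}\subseteq W$. The aim is condition (2) of Theorem \ref{thm_interpolation_H} with $\lambda_i=f(z_i)$, that is,
\[
\big[(1-f(z_i)\overline{f(z_j)})k(z_i,z_j)\big]\ge0\quad\text{for every } k\in AK(\Hex).
\]
Given $k\in AK(\Hex)$, let $\HS_F$ be the span of $k(\cdot,z_1),\dots,k(\cdot,z_n)$. Define $\underline{T}$ on $\HS_F$ by $T_j^*=M_{z^{(j)}}^*|_{\HS_F}$, as in the proof of Theorem \ref{thm_realization_H}; then $\underline{T}^*$ is diagonal with joint eigenvalues $\overline{z_i}$. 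We want to apply the hypothesis to $\underline{T}$ itself, which requires the following.
\begin{itemize}
\item[(a)] $\underline{T}\in Q\Hex$. The norm inequalities are inherited from compressing $M_z$ to the co-invariant subspace $\HS_F$: adjoints turn the resolvent-type expressions in $T^*$ into those in $T$, and membership is preserved under adjoints. The spectrum is $\sigma_T(\underline{T})=F\subseteq\Hex$.
\item[(b)] $\underline{T}$ is subordinate to $W$. Since $\underline{T}$ is diagonalizable with spectrum $F\subseteq W$, for any $h$ holomorphic near $W$ the operator $h(\underline{T})^*$ acts as multiplication by $\overline{h(z_i)}$ on the eigenvector $k(\cdot,z_i)$. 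Hence $h|_W=0$ forces $h(\underline{T})=0$.
\end{itemize}
If the Gram matrix of the kernels $k(\cdot,z_i)$ is singular, we first perturb $k$ by $\epsilon$ times a strictly positive admissible kernel, or restrict to a maximal independent subset, and then let $\epsilon\to0$. With (a) and (b) in hand, the hypothesis gives $\|f(\underline{T})^*\|\le1$, i.e.
\[
\Big\|\sum_i c_i\overline{f(z_i)}k(\cdot,z_i)\Big\|\le\Big\|\sum_i c_ik(\cdot,z_i)\Big\|,
\]
which is the required positivity. Theorem \ref{thm_interpolation_H} then produces $g_F\in SA(\Hex)$ with $g_F=f$ on $F$.

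\emph{Passing to all of $W$.} The functions $g_F$ are uniformly bounded by $1$. By Montel's theorem a subnet of $\{g_F\}$, indexed by finite $F\subseteq W$ directed by inclusion, converges locally uniformly to some holomorphic $g$ with $g|_W=f$. The class $SA(\Hex)$ is closed under such limits by the same Vasilescu-integral argument as in Step (4) of the proof of Theorem \ref{thm_realization_H}, so $g\in SA(\Hex)$. Finally, $\|g\|_{\infty,\Hex}\le1=\|f\|_{\infty,W}$, and equality holds because $g$ extends $f$.

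The main obstacle is the sufficiency step (a)–(b): certifying that the finite-dimensional compression of the multiplication tuple on $\HS(k)$ belongs to $Q\Hex$, including the boundary case where only non-strict inequalities hold, and that it is subordinate to $W$.
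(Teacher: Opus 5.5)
Your proof is correct. The necessity part is essentially the paper's argument: you dilate into $\mathfrak{M}_\Hex$ via $r\cdot\underline{T}$, and you use norm convergence where the paper uses weak-operator convergence, which is fine. For sufficiency you take a different and more direct route.

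The paper follows the Agler--McCarthy extremal template:
\begin{itemize}
\item For each finite $\mathbf{z}_n\subseteq W$ it takes an interpolant $g_n$ of least $\mathfrak{M}_\Hex$-norm $\rho$ (Lemma \ref{lem_304}).
\item It uses compactness of the set $K_{\mathbf{z}}$ of normalized admissible Gram matrices to find a single $\boldsymbol{\kappa}$ at which $[(\rho^2-\lambda_i\overline{\lambda}_j)\boldsymbol{\kappa}(i,j)]$ is singular.
\item From $\boldsymbol{\kappa}$ it builds a tuple $\underline{S}\in Q\Hex$ subordinate to $\mathbf{z}_n$ with $\|g_n(\underline{S})\|=\rho$ (Lemma \ref{lem_305}), so the hypothesis bounds $\rho$ by $\|f\|_{\infty,W}$.
\item A Montel limit along a countable dense subset of $W$ finishes.
\end{itemize}

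You apply the same finite-dimensional construction (restricting $M_z^*$ to the span of the kernel functions at $F$) to \emph{every} admissible kernel. This gives condition (2) of Theorem \ref{thm_interpolation_H} outright, so you need neither the extremal problem, nor the compactness of $K_{\mathbf{z}}$, nor Lemmas \ref{lem_304}--\ref{lem_305}.

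What the paper's route offers in addition is a minimax statement: the least norm of an interpolant is attained at one subordinate tuple. However, as written it poses the extremal problem inside $SA_f(\Hex)$. That tacitly presupposes an interpolant with $\|g\|_{\mathfrak{M}_\Hex}\le\|f\|_{\infty,W}$, which is essentially the conclusion. The argument only becomes non-circular once that constraint is dropped; this works because polynomial interpolants have finite $\mathfrak{M}_\Hex$-norm, since tuples in $\mathfrak{M}_\Hex$ are uniformly bounded. Your argument has no such issue.

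One minor point: your precaution about a singular Gram matrix is unnecessary. The operators $T_j^*=M_{z^{(j)}}^*|_{\HS_F}$ are well defined regardless. Moreover, the $k(\cdot,z_i)$ are automatically linearly independent, being nonzero joint eigenvectors of $M_z^*$ for the distinct joint eigenvalues $\overline{z}_i$. This is the polynomial-separation argument the paper uses to show $K_{\mathbf{z}}$ is closed.
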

	To prove Theorem \ref{thm_ext_H}, we adopt an approach based on the interpolation theorem, in the spirit of \cite{Agler_McCarthy_2003}, \cite{Tirtha_Sau} and \cite{Jain} for the bidisc, symmetrized bidisc and tetrablock, respectively. A first step in this direction is the following lemma which establishes the necessary condition of Theorem \ref{thm_ext_H}.
	
	\begin{lem}
		Let $W \subseteq \Hex$ and let $f \in \mathscr{HE}(W)$ be non-zero. If there exists $g \in H^\infty(\Hex)$ such that $\displaystyle \frac{1}{\|f\|_{\infty, W}} g \in SA(\Hex), g|_W=f$ and $\|g\|_{\infty, \Hex}=\|f\|_{\infty, W}$, then $
		\|f(\underline{T})\| \leq \|f\|_{\infty, W}$ for every $\underline{T} \in Q\Hex$ subordinate to $W$.
	\end{lem}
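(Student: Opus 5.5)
Set $h = g/\|f\|_{\infty,W} \in SA(\Hex)$. The goal is $\|h(\underline{T})\|\le 1$ for every $\underline{T}\in Q\Hex$ subordinate to $W$. Once this holds, subordination gives $f(\underline{T})=g(\underline{T})$, provided $g$ is holomorphic on a neighbourhood of $W$ and agrees with $f$ there. This is the case: $g\in H^\infty(\Hex)$ is holomorphic on the open set $\Hex\supseteq W$, and $g|_W=f$. Hence $\|f(\underline{T})\|=\|g(\underline{T})\|\le \|f\|_{\infty,W}$. The whole task is therefore to pass from the strict class $\mathfrak{M}_\Hex$, on which $SA(\Hex)$ is defined, to the non-strict class $Q\Hex$.

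I will use the realization in Theorem \ref{thm_realization_H}. Since $h\in SA(\Hex)$, we have $h\in UC(\Hex)$, so
\[
h(z)=A+BX(z)(I_\HS-DX(z))^{-1}C .
\]
The approximation argument of Step (2) of that theorem, or directly the proof of Lemma \ref{lem_prelim_III_H}, controls $X(\underline{T})$ via simple representations. The cleanest route, however, avoids the colligation and uses dilation by $r$. For $0<r<1$ put
\[
r\cdot\underline{T}=(rT_1,rT_2,rT_3,r^2T_4).
\]
I first claim that $r\cdot\underline{T}\in\mathfrak{M}_\Hex$ whenever $\underline{T}\in Q\Hex$. This is exactly the strict-inequality upgrade already used in the proof of $(1)\Rightarrow(2)$ of Theorem \ref{thm_realization_H}, where the authors assert that the $(1,1,1,2)$-quasi-balanced structure turns the non-strict bounds $\le 1$ into strict ones after scaling. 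Concretely, $\|rT_3\|\le r<1$. Moreover,
\[
\Psi(\al,rT_2,rT_3,r^2T_4)=r(\al rT_4-T_2)(\al rT_3-I)^{-1}=r\,\Psi(r\al,T_2,T_3,T_4),
\]
which has norm at most $r<1$. For $\psi_{\al_1,\al_2}$ one proceeds similarly: the denominator becomes $I-r\al_1T_2-r\al_2T_3+r^2\al_1\al_2T_4$, which equals the one for $(r\al_1,r\al_2)$. Hence
\[
\psi_{\al_1,\al_2}(r\cdot\underline{T})=r\,\frac{\sqrt{(1-|\al_1|^2)(1-|\al_2|^2)}}{\sqrt{(1-r^2|\al_1|^2)(1-r^2|\al_2|^2)}}\;\psi_{r\al_1,r\al_2}(\underline{T}),
\]
and the scalar prefactor is at most $r$. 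So all three norms are at most $r<1$. Invertibility of the denominators is guaranteed because $\sigma_T(\underline{T})\subseteq\Hex$ gives $(T_2,T_3,T_4)$ spectrum in $\E$.

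It follows that $\|h(r\cdot\underline{T})\|\le1$ for all $0<r<1$. It remains to let $r\uparrow1$. Here I use $\sigma_T(\underline{T})\subseteq\Hex$, which is part of the definition of $Q\Hex$, together with the holomorphic functional calculus. Fix an open $\Omega$ with $C^1$ boundary and $\sigma_T(\underline{T})\subset\Omega\subset\overline\Omega\subset\Hex$. For $r$ close to $1$, $\sigma_T(r\cdot\underline{T})=r\cdot\sigma_T(\underline{T})\subset\Omega$. The Martinelli integral representation of Vasilescu then expresses $h(r\cdot\underline{T})$ as an integral over $\partial\Omega$ of $h$ against the Martinelli kernel $M_{r\cdot\underline{T}}(z)$. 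This kernel depends continuously on $r$, uniformly on $\partial\Omega$, so $h(r\cdot\underline{T})\to h(\underline{T})$ in norm. Equivalently, one can cite continuity of the Taylor functional calculus in the operator tuple. Hence $\|h(\underline{T})\|\le1$.

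The main obstacle is this final limiting step: norm continuity of $r\mapsto h(r\cdot\underline{T})$. I would settle it either with the Vasilescu representation as above, mirroring Step (4) of the realization proof, or by noting that $h_r(z)=h(r\cdot z)$ is holomorphic near $\overline\Omega$ and converges to $h$ uniformly on compact sets. The latter gives $h_r(\underline{T})=h(r\cdot\underline{T})\to h(\underline{T})$ by continuity of the calculus in the function variable.
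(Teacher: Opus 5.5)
Your proof is correct and is essentially the paper's argument. You dilate $\underline{T}\in Q\Hex$ to $r\cdot\underline{T}=(rT_1,rT_2,rT_3,r^2T_4)\in\mathfrak{M}_\Hex$, apply the Schur--Agler bound to $g/\|f\|_{\infty,W}$, let $r\uparrow 1$, and identify $f(\underline{T})=g(\underline{T})$ by subordination. There are two differences from the paper: you explicitly verify $r\cdot\underline{T}\in\mathfrak{M}_\Hex$ via $\Psi(\al,rT_2,rT_3,r^2T_4)=r\,\Psi(r\al,T_2,T_3,T_4)$ and the analogous identity for $\psi_{\al_1,\al_2}$, which the paper only asserts; and you get norm convergence from continuity of the Taylor calculus under the locally uniform convergence $h(r\cdot z)\to h(z)$, where the paper uses weak-operator convergence via dominated convergence.
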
	
	
	\begin{proof}
		Let $\underline{T}=(T_1, T_2, T_3, T_4) \in Q\Hex$ be subordinate to $W$, and let $\underline{T}$ be acting on a Hilbert space $\mathcal{K}$.  Set $r\cdot z=(rz^{(1)}, rz^{(2)}, rz^{(3)}, r^2z^{(4)})$ for $z=(z^{(1)}, z^{(2)}, z^{(3)}, z^{(4)}) \in \C^4$ and $0<r<1$.  Let $r_n=1-1\slash n$ for $n \in \N$. Since $\Hex$ is $(1, 1, 1, 2)$-quasi-balanced, it follows that $r_n\cdot z \in \Hex$ for all $z \in \Hex$. Define $g_n: \Hex \to \C$ as $g_n(z)=g(r_n\cdot z)$. Then $\{g_n\}$ is a uniformly bounded sequence of holomorphic functions converging pointwise to $g$. An application of the dominated convergence theorem gives that $g_n(\underline{T})$ converges to $g(\underline{T})$ in the weak operator topology. Since $r_n\cdot \underline{T} \in \mathfrak{M}_\Hex$ and $\frac{1}{\|f\|_{\infty, W}}g \in SA(\Hex)$, we have that
		$
		|\langle g(\underline{T})x, y\rangle |=\lim_{n \to \infty}|\langle g(r_n\cdot\underline{T})x, y\rangle | \leq \lim_{n \to \infty}\|g(r_n \cdot \underline{T})\| \|x\| \| y\| \leq \|f\|_{\infty, W} \|x\| \|y\|.
		$
		Thus, $\|f(\underline{T})\| =\|g(\underline{T})\|\leq \|f\|_{\infty, W}$. 
	\end{proof}	
	
	The proof of the converse to Theorem \ref{thm_ext_H} brings out a subtle relationship between classical extremal problems and operator theoretic techniques. We first reformulate the interpolation theorem for $\Hex$ in a form suited to the proof of the extension theorem. Let $z_1=(z_1^{(1)}, z_1^{(2)}, z_1^{(3)}, z_1^{(4)}), \dotsc, z_n=(z_n^{(1)}, z_n^{(2)}, z_n^{(3)}, z_n^{(4)})$ be distinct points in $\Hex$. We denote this data by $\textbf{z}$. Let $K_\textbf{z}$ be the collection of all $n \times n$ strictly positive definite matrices $[k(i, j)]_{i, j=1}^n$ with $k(i, i)=1$ for $1 \leq i \leq n$ such that
	\begin{equation}\label{eqn_301}
		\left.
		\begin{aligned}
			&  \left[\left(1-z_i^{(3)}\overline{z}_j^{(3)}\right)k(i, j)\right]_{i, j=1}^n \geq 0, \\[5pt]
			&  \left[\left(1-\Psi(\al, z_i^{(2)}, z_i^{(3)}, z_i^{(4)})\overline{\Psi(\al,  z_j^{(2)}, z_j^{(3)}, z_j^{(4)})}\right)k(i, j)\right]_{i, j=1}^n \geq 0 
			\quad \text{for all} \ \al \in \DC, \\[5pt]
			&  \left[\left(1-\psi_{\al_1, \al_2}(z_i)\overline{\psi_{\al_1, \al_2}(z_j)}\right)k(i, j)\right] \geq 0 
			\quad \text{for all} \ (\al_1, \al_2) \in \DC^2.
		\end{aligned}
		\right\}
	\end{equation}
	Evidently, $K_\textbf{z}$ is a subset of $n \times n$ self-adjoint complex matrices. We prove that $K_\textbf{z}$ is compact by showing that it is closed and bounded with respect to the matrix norm. The fact that $K_\textbf{z}$ is bounded follows since $\|k\| \leq \text{tr}(k)=n$ for all $k \in K_\textbf{z}$. Let $\{k_m\}$ be a sequence in $K_\textbf{z}$ that converges to $k$ in the matrix norm. By continuity arguments, it follows that $k$ is positive semi-definite and $k$ satisfies the conditions in \eqref{eqn_301}. It remains to show that $k$ is a strictly positive definite matrix. Assume on the contrary that there exists a non-zero vector $v=(v_1, \dotsc, v_n)^t \in \C^n$ such that $kv=0$. Let $M_1, M_2, M_3$ and $M_4$ be  $n \times n$ diagonal matrices whose $(i, i)$-th entries are $z_i^{(1)}, z_i^{(2)}, z_i^{(3)}$ and $z_i^{(4)}$, respectively. It follows from \eqref{eqn_301} that $k(M_1v)=k(M_2v)=k(M_3v)=k(M_4v)=0$ and so, $k(p(M_1, M_2, M_3, M_4)v)=0$ for any holomorphic polynomial $p$ in four variables. Since $v \ne 0$, there exists some $i$ such that $v_i \ne 0$. Since $z_1, \dotsc, z_n$ are distinct, one can choose a polynomial $p$ such that $p(z_i)=1$ and $p(z_j)=0$ for $ 1 \leq j \leq n$ with $j \ne i$. Then $k(p(M_1, M_2, M_3, M_4)v)$ equals $v_i$ times the $(i, i)$-th column of $k$ and thus $k(i, i)=0$, which gives a contradiction as $k(i, i)=1$. Hence, the collection $K_\textbf{z}$ is compact. We shall use the compactness of $K_\textbf{z}$ to prove our main result. To this end, we present our next result, which is simply a reformulation of the equivalence $(1)$ and $(2)$ in Theorem \ref{thm_interpolation_H}.
	
	\begin{thm}\label{thm_int_H_II}
		Let $z_1,\dotsc, z_n$ be distinct points in $\Hex$, and let $\lm_1, \dotsc, \lm_n \in \DC$. Then there exists $g \in SA(\Hex)$ such that $g(z_i)=\lm_i$ for $1 \leq i \leq n$ if and only if 	$\begin{bmatrix} (1-\lm_i\overline{\lm_j})k(i, j)\end{bmatrix}_{i, j=1}^n \geq 0$ for all $k \in K_\textbf{z}$.
	\end{thm}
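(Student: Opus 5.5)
The plan is to reduce the statement to the equivalence $(1)\Leftrightarrow(2)$ of Theorem \ref{thm_interpolation_H}. That theorem quantifies over all admissible kernels $k \in AK(\Hex)$, whereas here we only test strictly positive definite, unit-diagonal matrices on $F=\{z_1,\dotsc,z_n\}$ satisfying \eqref{eqn_301}. The task is therefore to show that the two families of test kernels produce the same positivity condition for $[1-\lm_i\overline{\lm}_j]$.

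For necessity, assume $g \in SA(\Hex)$ with $g(z_i)=\lm_i$, and fix $k\in K_\textbf{z}$. I would not try to extend $k$ to an admissible kernel on all of $\Hex$ (zero extension fails $k(z,z)\neq 0$). Instead I would run the argument of $(1)\Rightarrow(2)$ in Theorem \ref{thm_realization_H} directly on the finite-dimensional space $\C^n$ equipped with the inner product given by $k$.

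1. Strict positivity of $k$ makes $\HS(k)=\mathrm{span}\{k(\cdot,z_i)\}$ an $n$-dimensional reproducing kernel Hilbert space on $F$.
2. Define $T_j^*$ by $T_j^*k(\cdot,z_\ell)=\overline{z}_\ell^{(j)}k(\cdot,z_\ell)$. These operators commute, being simultaneously diagonal in a basis.
3. The conditions \eqref{eqn_301} translate exactly into $\|T_3^*\|\le 1$, $\|\Psi(\al,\underline{T}^*)\|\le 1$ and $\|\psi_{\al_1,\al_2}(\underline{T}^*)\|\le 1$. For $\Psi$ and $\psi$ one uses that these are diagonal with eigenvalues $\overline{\Psi(\al,z_\ell)}$ (up to conjugating $\al$), so each norm inequality is equivalent to positivity of the corresponding Pick-type matrix.
4. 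By quasi-balancedness, $r\cdot\underline{T}^*\in\mathfrak{M}_\Hex$ for $0<r<1$. Applying $\widehat g(z)=\overline{g(\bar z)}$, which also lies in $SA(\Hex)$, gives $\|\widehat g(r\cdot \underline{T}^*)\|\le 1$.
5. Since $\underline{T}^*$ is diagonalizable with joint eigenvalues $\bar z_\ell$, the operator $\widehat g(r\cdot\underline{T}^*)$ acts on $k(\cdot,z_\ell)$ as multiplication by $\overline{g(r\cdot z_\ell)}$. This avoids the Oka–Weil step entirely.
6. The contractivity then yields $[(1-g(r\cdot z_i)\overline{g(r\cdot z_j)})k(i,j)]\ge 0$. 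Letting $r\uparrow 1$ gives the claim, using continuity of $g$.

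For sufficiency, assume positivity holds for every $k\in K_\textbf{z}$; I verify condition $(2)$ of Theorem \ref{thm_interpolation_H}. Let $k\in AK(\Hex)$, and for $\epsilon>0$ set $k_\epsilon=k+\epsilon\, s$ on $F$, where $s(z,w)=(1-z^{(3)}\overline{w}^{(3)})^{-1}\,\cdots$ is some fixed admissible kernel that is strictly positive definite on $F$.

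The main obstacle is producing such an $s$. My first attempt would be to take $s=k_0$, the reproducing kernel of a space on which the coordinate multipliers form an $\mathfrak{M}_\Hex$-type tuple. Alternatively, one can use that $n$ distinct points make the matrices $[\overline{c_i}c_j]$ restrictions of suitable products. A cleaner route is to avoid $s$ altogether and use a perturbation inside $K_\textbf{z}$. The restriction $[k(z_i,z_j)]$ is positive semidefinite and satisfies \eqref{eqn_301}, and $k(z_i,z_i)>0$. The Szegő-type kernel $\prod(1-\cdots)^{-1}$ is not obviously admissible, so instead I would:

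1. Normalize by $D^{-1/2}[k]D^{-1/2}$ with $D=\mathrm{diag}(k(z_i,z_i))$. This is a Schur product with a rank-one positive matrix, so \eqref{eqn_301} is preserved.
2. Invoke the argument already given for compactness of $K_\textbf{z}$. A kernel satisfying \eqref{eqn_301} with unit diagonal automatically has trivial null space, by the polynomial argument with $p(M_1,\dotsc,M_4)$ in the text.

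So the normalized restriction already lies in $K_\textbf{z}$. Hence $[(1-\lm_i\overline{\lm}_j)k(z_i,z_j)]$ is $D^{1/2}(\cdot)D^{1/2}$ of a positive matrix, and is therefore positive. Theorem \ref{thm_interpolation_H} then yields $g\in SA(\Hex)$ interpolating the data.
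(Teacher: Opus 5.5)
Your proof is correct, and its skeleton is the one the paper intends: the paper gives no argument beyond calling the statement a reformulation of $(1)\Leftrightarrow(2)$ in Theorem \ref{thm_interpolation_H}.

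\textbf{Sufficiency.} This direction is exactly that reduction made rigorous. You normalize the restriction of $k\in AK(\Hex)$ to $F$ by its positive diagonal. Strict positivity then comes from the null-space argument in the compactness discussion, which needs only positive semi-definiteness, \eqref{eqn_301}, a nonzero diagonal and distinct points. The fourth coordinate enters through $z^{(4)}=\Psi(1,z^{(2)},z^{(3)},z^{(4)})(z^{(3)}-1)+z^{(2)}$. The abandoned $k_\epsilon$ detour can be deleted.

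\textbf{Necessity.} Here you deviate from the reduction: you rerun the model-operator argument on the $n$-dimensional span of the columns of $k$. That works, and it even avoids Oka--Weil, since the functional calculus of a jointly diagonalizable tuple is evaluation at the joint eigenvalues. You should justify $r\cdot\underline{T}^*\in\mathfrak{M}_\Hex$ by the identities $\Psi(\al, rS_2, rS_3, r^2S_4)=r\,\Psi(r\al, S_2, S_3, S_4)$ and $\psi_{\al_1,\al_2}(rS_1, rS_2, rS_3, r^2S_4)=c\,\psi_{r\al_1, r\al_2}(S_1, S_2, S_3, S_4)$ with $0\le c\le r$. Quasi-balancedness of the set $\Hex$ alone does not give this.

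\textbf{The extension obstacle is not real.} Your reason for not reducing necessity to Theorem \ref{thm_interpolation_H} is mistaken. Pad $k$ with the diagonal kernel: set $\hat k(z_i,z_j)=k(i,j)$, $\hat k(z,z)=1$ for $z\notin F$, and $\hat k(z,w)=0$ otherwise. By \eqref{eqn_H}, the test functions $z^{(3)}$, $\Psi(\al,\cdot)$ and $\psi_{\al_1,\al_2}$ all have modulus $<1$ on $\Hex$. Hence each of the three admissibility kernels for $\hat k$ is block diagonal with positive semi-definite blocks, so $\hat k\in AK(\Hex)$. Necessity then follows at once from $(1)\Rightarrow(2)$ of that theorem. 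The same diagonal kernel is also the strictly positive admissible $s$ you were looking for.

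\textbf{Comparison.} The reduction is shorter. Your direct route is more self-contained, avoiding the full space $\HS(k)$ and the polynomial convexity of $\overline{\Hex}$.
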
 
	
	We now introduce an auxiliary class of functions in $H^\infty(\Hex)$. For $W \subseteq \Hex$ and $f \in \mathscr{HE}(W)$, set
	\[
	SA_f(\Hex)=\{g \in H^\infty(\Hex): \|g(\underline{T})\| \leq \|f\|_{\infty, W} \ \text{for all $\underline{T} \in \mathfrak{M}_\Hex$} \},
	\]
	which is clearly nonempty. For $g \in H^\infty(\Hex)$, let us define $\|g\|_{\mathfrak{M}_\Hex}=\sup\{\|g(\underline{T})\|: \underline{T} \in \mathfrak{M}_\Hex \}$. Evidently, $\|g\|_{\infty, \Hex} \leq \|g\|_{\mathfrak{M}_\Hex} \leq \|f\|_{\infty, W}$ for all $g \in SA_f(\Hex)$. For the given data $\textbf{z}=\{z_1, \dotsc, z_n\} \subseteq W$ and $\boldsymbol{\lambda}=(\lambda_1, \dotsc, \lambda_n) \in \C^n$, we denote by 
	\[
	\rho_f(\textbf{z}, \boldsymbol{\lambda})=\inf\left\{\|g\|_{\mathfrak{M}_\Hex}: g \in SA_f(\Hex) \ \text{and} \ g(z_i)=\lambda_i, 1 \leq i \leq n \right\}.
	\]
	It is easy to see that $|\lm_i| \leq \rho_f(\textbf{z}, \boldsymbol{\lambda})$ for $1 \leq i \leq n$. Our next result shows that $\rho_f(\textbf{z}, \boldsymbol{\lambda})$ is attained at some function $g$ in $SA_f(\Hex)$, that is, $g(z_i)=\lm_i$ for $1 \leq i \leq n$ and $\rho_f(\textbf{z}, \boldsymbol{\lambda})=\|g\|_{\mathfrak{M}_\Hex}$. Such a $g$ is called an \textit{extremal function} for the data $\textbf{z}$ and $\boldsymbol{\lambda}$.
	
	\begin{lem}\label{lem_304}
		Let $W$ be a subset of $\Hex$ and let $f \in \mathscr{HE}(W)$ be non-zero. For given $\textbf{z}=\{z_1, \dotsc, z_n\} \subseteq W$ and $\boldsymbol{\lambda}=(\lambda_1, \dotsc, \lambda_n) \in \C^n$, there exists an extremal function in $SA_f(\Hex)$ for the data $\textbf{z}$ and $\boldsymbol{\lambda}$.
	\end{lem}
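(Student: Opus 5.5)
We argue by a normal-families compactness argument. If no $g \in SA_f(\Hex)$ satisfies $g(z_i)=\lambda_i$ for all $i$, then $\rho_f(\textbf{z}, \boldsymbol{\lambda})=\inf\emptyset=+\infty$ and the statement is vacuous. We therefore assume the infimum is over a nonempty set, and write $\rho=\rho_f(\textbf{z}, \boldsymbol{\lambda}) \le \|f\|_{\infty, W}$. Choose a minimizing sequence $g_m \in SA_f(\Hex)$ with $g_m(z_i)=\lambda_i$ and $\|g_m\|_{\mathfrak{M}_\Hex} \downarrow \rho$. Since $\|g_m\|_{\infty,\Hex} \le \|g_m\|_{\mathfrak{M}_\Hex} \le \|f\|_{\infty, W}$, the sequence is uniformly bounded on $\Hex$. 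By Montel's theorem we may pass to a subsequence converging locally uniformly on $\Hex$ to some $g \in H^\infty(\Hex)$. Pointwise convergence then gives $g(z_i)=\lambda_i$ for $1 \le i \le n$.

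The key step is to show that $\|g(\underline{T})\| \le \rho$ for every $\underline{T} \in \mathfrak{M}_\Hex$. This gives both $g \in SA_f(\Hex)$ and $\|g\|_{\mathfrak{M}_\Hex} \le \rho$, and hence equality by the definition of $\rho$ as an infimum. Fix $\underline{T} \in \mathfrak{M}_\Hex$ acting on $\mathcal{K}$. By the fifth basic property of $\mathfrak{M}_\Hex$, $\sigma_T(\underline{T})$ is a compact subset of $\Hex$. We argue exactly as in Step (4) of the proof of $(4)\implies(1)$ in Theorem \ref{thm_realization_H}: take an open set $\Omega$ with $C^1$ boundary such that $\sigma_T(\underline{T}) \subseteq \Omega$ and $\overline{\Omega} \subseteq \Hex$, and represent $g_m(\underline{T})$ and $g(\underline{T})$ by Vasilescu's Martinelli-kernel integral over $\partial\Omega$. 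Since $\partial\Omega$ is a compact subset of $\Hex$, the convergence $g_m \to g$ is uniform there. Consequently $g_m(\underline{T}) \to g(\underline{T})$ in norm, or at least weakly by dominated convergence. In either case, for unit vectors $x,y$ we get
\[
|\langle g(\underline{T})x, y\rangle| = \lim_m |\langle g_m(\underline{T})x,y\rangle| \le \lim_m \|g_m\|_{\mathfrak{M}_\Hex} = \rho .
\]

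The main (mild) obstacle is this passage from locally uniform convergence to convergence of the functional calculus. It needs a fixed domain $\Omega$, depending only on $\underline{T}$, on whose boundary the convergence is uniform. The Martinelli-kernel representation quoted earlier supplies such an $\Omega$, together with the uniform bound on $\partial\Omega$ coming from $\|g_m\|_{\infty} \le \|f\|_{\infty,W}$. With this in hand, $g$ is the required extremal function.
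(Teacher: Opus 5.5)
Your proposal is correct and follows essentially the same route as the paper: a minimizing sequence, Montel's theorem, passage to the limit in the functional calculus through the Martinelli-kernel representation (you get norm convergence from uniform convergence on $\partial\Omega$, where the paper uses weak-operator convergence via dominated convergence), and the infimum bound giving $\|g\|_{\mathfrak{M}_\Hex}=\rho_f(\textbf{z},\boldsymbol{\lambda})$. One small correction: the case where no $g\in SA_f(\Hex)$ interpolates the data (e.g.\ $|\lambda_1|>\|f\|_{\infty,W}$) is not vacuous; there the lemma as literally stated fails, so both your argument and the paper's tacitly assume that the interpolating set is nonempty.
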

	
	\begin{proof}
		Let $\{g_m\}$ be a sequence in $SA_f(\Hex)$ such that $g_m(z_i)=\lm_i$ for $1 \leq i \leq n$ and $\rho_f(\textbf{z}, \boldsymbol{\lambda})=\underset{m \to \infty}\lim\|g_m\|_{\mathfrak{M}_\Hex}$. Evidently, $\|g\|_{\infty, \Hex} \leq \|g\|_{\mathfrak{M}_\Hex} \leq \|f\|_{\infty, W}$ for every $g \in SA_f(\Hex)$. Therefore, one can find a subsequence $\{g_{m_k}\}$ of $\{g_m\}$ which converges pointwise to $g \in H^\infty(\Hex)$. For every $\underline{T}$ in $\mathfrak{M}_\Hex$, an application of dominated convergence theorem gives that $g_{m_k}(\underline{T})$ converges to $g(\underline{T})$ in the weak-operator topology. Consequently, $\|g(\underline{T})\| \leq \|f\|_{\infty, W}$ for all $\underline{T} \in \mathfrak{M}_\Hex$ and $g(z_i)=\lm_i$ for $1 \leq i \leq n$. By definition of $\rho_f(\textbf{z}, \boldsymbol{\lambda})$, it follows that $\rho_f(\textbf{z}, \boldsymbol{\lambda}) \leq \|g\|_{\mathfrak{M}_\Hex}$. Let $\underline{T}$ be a commuting quadruple of operators acting on a Hilbert space $\mathcal{K}$, and let $\underline{T} \in \mathfrak{M}_\Hex$. Then 
		for every $x, y \in \mathcal{K}$,
		\[
		|\langle g(\underline{T})x, y\rangle |=\lim_{k \to \infty}|\langle g_{m_k}(\underline{T})x, y\rangle | \leq \lim_{k \to \infty}\|g_{m_k}(\underline{T})\| \|x\| \| y\| \leq \lim_{k \to \infty}\|g_{m_k}\|_{\mathfrak{M}_\Hex}\|x\|\|y\|=\rho_f(\textbf{z}, \boldsymbol{\lambda}) \|x\| \|y\|.
		\]
		Therefore, $\|g(\underline{T})\| \leq \rho_f(\textbf{z}, \boldsymbol{\lambda})$ for all $\underline{T} \in \mathfrak{M}_\Hex$ and so, $\|g\|_{\mathfrak{M}_\Hex} \leq \rho_f(\textbf{z}, \boldsymbol{\lambda})$. The proof is complete.
	\end{proof}
	
	We now prove the following lemma, which is the final ingredient in the proof of Theorem \ref{thm_ext_H}.
	
	\begin{lem}\label{lem_305}
		Let $W$ be a subset of $\Hex$ and let $f \in \mathscr{HE}(W)$ be non-zero. If $g \in SA_f(\Hex)$ is an extremal function for the data $\textbf{z}=\{z_1, \dotsc, z_n\} \subseteq W$ and $\boldsymbol{\lambda}=(\lambda_1, \dotsc, \lambda_n) \in \C^n$, then there exists $\underline{S} \in Q\Hex$ subordinate to $\textbf{z}$ such that $\|g(\underline{S})\|=\rho_f(\textbf{z}, \boldsymbol{\lambda})$.
	\end{lem}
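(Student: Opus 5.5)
Set $\rho=\rho_f(\textbf{z},\boldsymbol{\lambda})$. The plan follows Agler--McCarthy \cite{Agler_McCarthy_2003} and \cite{Jain}: build $\underline{S}$ as the compression of the multiplication quadruple to the span of finitely many kernel functions for an admissible kernel that witnesses extremality. First dispose of the trivial case $\rho=0$: then all $\lambda_i=0$, and any diagonal quadruple $\underline{S}=\mathrm{diag}(z_1,\dots,z_n)$ works, with $g(\underline{S})=0$. Such a diagonal quadruple lies in $Q\Hex$ because the defining norms are maxima of the scalar quantities, which are $<1$ by \eqref{eqn_H}. It is subordinate to $\textbf{z}$ because $h(\underline{S})=\mathrm{diag}(h(z_i))$.

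Now assume $\rho>0$. For $0<t<1$, the data $\boldsymbol{\lambda}/(t\rho)$ has no interpolant in $SA(\Hex)$; otherwise $t\rho\cdot(\text{that function})$ would lie in $SA_f(\Hex)$ (since $t\rho\le\|f\|_{\infty,W}$) with $\|\cdot\|_{\mathfrak{M}_\Hex}<\rho$, contradicting minimality. By Theorem \ref{thm_int_H_II}, there is then $k_t\in K_\textbf{z}$ such that $[(t^2\rho^2-\lambda_i\overline{\lambda}_j)k_t(i,j)]$ is not positive semi-definite. By compactness of $K_\textbf{z}$ (proved above), a subsequence $k_{t_m}$ with $t_m\uparrow1$ converges to some $k\in K_\textbf{z}$, and $P=[(\rho^2-\lambda_i\overline{\lambda}_j)k(i,j)]$ is positive semi-definite but singular. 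Positivity comes from $\rho/\rho=1$ data being interpolable by $g/\rho\in SA(\Hex)$. Singularity comes from a limit of unit vectors $v_m$ with $\langle P_{t_m}v_m,v_m\rangle<0$.

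Next realize $k$ as a Gram matrix. Since $k>0$, choose linearly independent vectors $u_1,\dots,u_n$ in $\C^n$ with $\langle u_j,u_i\rangle=k(i,j)$. Define $\underline{S}^*$ by $S_\ell^* u_i=\overline{z_i^{(\ell)}}u_i$, so $\underline{S}$ is a commuting, simultaneously diagonalizable quadruple with joint eigenvalues $z_i$. Then $\sigma_T(\underline{S})=\textbf{z}\subset\Hex$, and for $h$ holomorphic near $\textbf{z}$ we have $h(\underline{S})^*u_i=\overline{h(z_i)}u_i$. Hence $h(\underline{S})=0$ whenever $h|_\textbf{z}=0$, so $\underline{S}$ is subordinate to $\textbf{z}$. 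The three admissibility conditions in \eqref{eqn_301} translate exactly to $\|S_3^*\|\le1$, $\|\Psi(\al,\underline{S}^*)\|\le 1$ and $\|\psi_{\al_1,\al_2}(\underline{S}^*)\|\le1$. For example, $\|\sum c_i\overline{\psi(z_i)}u_i\|^2\le\|\sum c_iu_i\|^2$ is the positivity of $[(1-\psi(z_i)\overline{\psi(z_j)})k(i,j)]$. Each function involved is holomorphic near $\textbf{z}$, so these operator functions are defined, and taking adjoints gives $\underline{S}\in Q\Hex$.

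Finally, $g(\underline{S})^*u_i=\overline{\lambda_i}u_i$, so $\rho^2\|x\|^2-\|g(\underline{S})^*x\|^2=\langle Pc,c\rangle$ for $x=\sum c_iu_i$. Positivity of $P$ gives $\|g(\underline{S})\|\le\rho$, and a nonzero kernel vector of $P$ gives equality.

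The main obstacle is the limiting step. I need the limit $k$ to stay in $K_\textbf{z}$, which compactness of $K_\textbf{z}$ provides, so that the $u_i$ are independent. I also need the strict non-positivity at each $t_m$ to pass to singularity in the limit. To handle this I will normalize the witnessing vectors $v_m$ and use $\langle P_{t_m}v_m,v_m\rangle<0\le\langle P v_m,v_m\rangle+o(1)$, which holds because $P_{t_m}-P\to0$ as $t_m\uparrow 1$ with the $k_t$ uniformly bounded.
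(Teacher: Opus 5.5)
Your proposal is correct and follows essentially the same route as the paper. Extremality of $\rho$ together with compactness of $K_{\textbf{z}}$ produces some $\boldsymbol{\kappa}\in K_{\textbf{z}}$ whose Pick matrix $[(\rho^2-\lambda_i\overline{\lambda}_j)\boldsymbol{\kappa}(i,j)]$ is positive semi-definite but singular, and $\underline{S}$ is then defined by $S_\ell^*u_i=\overline{z_i^{(\ell)}}u_i$ on the span of Gram vectors of $\boldsymbol{\kappa}$; the paper reaches the singular matrix through $\rho=\inf\Lambda$ and continuity of the minimal eigenvalue rather than your sequence $t_m\uparrow 1$, and it uses a scalar quadruple instead of your diagonal one when $\rho=0$. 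One detail to add: when some $|\lambda_i|>t\rho$, Theorem \ref{thm_int_H_II} does not literally apply, but every $k\in K_{\textbf{z}}$ (for instance the identity matrix) then already has the negative diagonal entry $t^2\rho^2-|\lambda_i|^2$, so the witness $k_t$ still exists.
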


	\begin{proof}
		Suppose $g \in SA_f(\Hex)$ is an extremal function for the data $\textbf{z}=\{z_1, \dotsc, z_n\} \subseteq W$ and $\boldsymbol{\lambda}=(\lambda_1, \dotsc, \lambda_n) \in \C^n$.	For the sake of brevity, let $\rho=\rho_f(\textbf{z}, \boldsymbol{\lambda})$. If $\rho=0$, then $\|g\|_{\infty, \Hex} \leq \|g\|_{\mathfrak{M}_\Hex}=\rho=0$ and so, $g=0$. In this case, one can choose $\underline{S}=(z_1^{(1)}I, z_1^{(2)}I, z_1^{(3)}I, z_1^{(4)}I)$ on any Hilbert space $\mathcal{K}$. Evidently, $\underline{S} \in Q\Hex$ is subordinate to $\textbf{z}$ and $\|g(\underline{S})\|=\rho=0$.	Let us assume that $\rho>0$. Since $\|g(\underline{T})\| \leq \|g\|_{\mathfrak{M}_\Hex} = \rho$ for every $\underline{T} \in \mathfrak{M}_\Hex$, it follows that $\frac{1}{\rho}g \in SA(\Hex)$ and $\frac{1}{\rho}g(z_i)=\lm_i\slash \rho$ for $1\leq i \leq n$. By Theorem \ref{thm_int_H_II}, $\left[(\rho^2-\lm_i\overline{\lm}_j)k(i, j)\right]_{i, j=1}^n \geq 0$ for all $k \in K_\textbf{z}$. Consider the set
		\[
		\Lambda=\left\{\lambda : 0 < \lambda \leq \|f\|_{\infty, W} \ \ \text{and} \ \ \left[(\lambda^2-\lm_i\overline{\lm}_j)k(i, j)\right]_{i, j=1}^n \geq 0 \ \text{for all} \ k \in K_\textbf{z} \right\}.
		\]
		Clearly, $\rho \in \Lambda$. Let $\lambda \in Y$. Note that $\lambda_i \slash \lambda \in \DC$ since $(\lambda^2 - |\lambda_i|^2)k(i,i) \geq 0$ for $k \in K_{\mathbf{z}}$ and $1 \leq i \leq n$. By Theorem \ref{thm_int_H_II}, there exists $h_\lambda \in SA(\Hex)$ such that $h_\lm(z_i)=\lm_i\slash \lm$ for $1 \leq i \leq n$. Define $f_\lm=\lm h_\lm$. Then $\|f_\lm(\underline{T})\|=\lm \|h_\lm(\underline{T})\| \leq \lm \leq \|f\|_{\infty, W}$ for all $\underline{T} \in \mathfrak{M}_\Hex$ and thus, $\|f_\lm\|_{\mathfrak{M}_\Hex} \leq \lm$. Consequently, $f_\lm \in SA_f(\Hex)$ and $f_\lm(z_i)=\lm_i$ for $1 \leq i \leq n$. By definition of $\rho_f(\textbf{z}, \boldsymbol{\lambda})$, it follows that $\rho \leq \|f_\lm\|_{\mathfrak{M}_\Hex} \leq \lambda$. So, $\rho \leq \lambda$ for every $\lambda \in \Lambda$. Since $\rho \in \lm$, we have that $\rho=\inf \Lambda$. We claim that there exists a matrix $\boldsymbol{\kappa} \in K_\textbf{z}$ and a non-zero vector $y=(y_1, \dotsc, y_n)^t$ such that
		\begin{equation}\label{eqn_302}
			\overset{n}{\underset{i, j=1}{\sum}}(\rho^2-\lm_i\overline{\lm}_j)\boldsymbol{\kappa}(i, j)\overline{y}_iy_j=0.
		\end{equation}
		For $k \in K_{\textbf{z}}$, let $\mu(k)$ be the minimum eigenvalue of $[(\rho^2-\lm_i\overline{\lm}_j)k(i, j)]_{i, j=1}^n$. Set $\mu=\inf_{k \in K_{\textbf{z}}}\mu(k)$. The claim holds trivially for $\mu=0$ since $k \mapsto \mu(k)$ is a continuous map and $K_\textbf{z}$ is compact. Suppose $\mu>0$. Let $\delta=\sup\{\|k\|: k \in K_\textbf{z}\}$, which is finite as $K_\textbf{z}$ is compact. For every $y \in \C^n$ and $k \in K_\textbf{z}$,
		\[
		\left\langle \left[(\rho^2-\lm_i\overline{\lm}_j)k(i, j)\right]_{i, j=1}^ny, y \right\rangle \geq \mu(k)\|y\|^2 \geq \mu \|y\|^2.
		\]
		For $0<\epsilon< \mu \slash \delta$, a routine computation gives that 
		\[
		\left[(\rho^2-\epsilon-\lm_i\overline{\lm}_j)k(i, j)\right]_{i, j=1}^n \geq 0
		\]
		for every $k \in K_\textbf{z}$, contradicting the fact that $\rho=\inf \Lambda$. Hence, $\mu=0$, and the above claim holds for some $\boldsymbol{\kappa} \in K_\textbf{z}$. Since $\boldsymbol{\kappa}=[\boldsymbol{\kappa}(i, j)]_{i, j=1}^n$ is strictly positive, the column space $\mathcal{K}_n=\text{span}\{\boldsymbol{\kappa}(., j) : 1\leq j \leq n \}$ is precisely $n$-dimensional. Consider the operators on $\mathcal{K}_n$ given by
		\[
		S_1^*\boldsymbol{\kappa}(., j)=\overline{z}_j^{(1)}\boldsymbol{\kappa}(., j), \ \ S_2^*\boldsymbol{\kappa}(., j)=\overline{z}_j^{(2)}\boldsymbol{\kappa}(., j), \ \ S_3^*\boldsymbol{\kappa}(., j)=\overline{z}_j^{(3)}\boldsymbol{\kappa}(., j) \ \ \text{and} \ \ S_4^*\boldsymbol{\kappa}(., j)=\overline{z}_j^{(4)}\boldsymbol{\kappa}(., j)
		\]
		for $1 \leq j \leq n$. Also, $(S_1^*, S_2^*, S_3^*, S_4^*)$ is a commuting quadruple of operators with $\sigma_T(S_1^*, S_2^*, S_3^*, S_4^*)=\{(\overline{z}_j^{(1)}, \overline{z}_j^{(2)}, \overline{z}_j^{(3)}, \overline{z}_j^{(4)}) : 1\leq j \leq n\}$, which implies that $\underline{S}=(S_1, S_2, S_3, S_4)$ is subordinate to $\textbf{z}$. In fact, $\underline{S}$ and $\underline{S}^*=(S_1^*, S_2^*, S_3^*, S_4^*)$ belong to $Q\Hex$. Also, $g(\underline{S})^*\boldsymbol{\kappa}(., j)=\overline{g(z_j)}\boldsymbol{\kappa}(., j)=\overline{\lm}_j\boldsymbol{\kappa}(., j)$ for $1 \leq j \leq n$. Since $\left[(\rho^2-\lm_i\overline{\lm}_j)\boldsymbol{\kappa}(i, j)\right] \geq 0$, it follows that $\|g(\underline{S})\|=\|g(\underline{S})^*\| \leq \rho$. The equality $\|g(\underline{S})\|=\rho$ follows directly from \eqref{eqn_302}, which completes the proof.
	\end{proof}
	
	With these preparations, the proof of the extension theorem follows immediately.
	
	\medskip
	
	\noindent \textit{Proof of Theorem \ref{thm_ext_H}:} Let $W \subseteq \Hex$. Suppose $f \in \mathscr{HE}(W)$ is a non-zero function such that 
	\begin{equation}\label{eqn_303}
		\|f(\underline{T})\| \leq \|f\|_{\infty, W} \quad \text{for every $\underline{T} \in Q\Hex$ subordinate to $W$}.	
	\end{equation}
	Choose a dense subset $\{z_1, z_2, \dotsc\}$ of $W$. Let $\textbf{z}_n=\{z_1, \dotsc, z_n\}$ and let $\boldsymbol{\lambda}_n=(f(z_1), \dotsc, f(z_n))$. It follows from Lemma \ref{lem_304} that there exists an extremal function $g_n \in SA_f(\Hex)$ such that $\rho_f(\textbf{z}_n, \boldsymbol{\lambda}_n)=\|g_n\|_{\mathfrak{M}_\Hex}$ for each $n$. By Lemma \ref{lem_305}, there exists $\underline{S}_n=(S_n^{(1)}, S_n^{(2)}, S_n^{(3)}, S_n^{(4)}) \in Q\Hex$ subordinate to $\textbf{z}_n$ such that $\|g_n(\underline{S}_n)\|=\rho_f(\textbf{z}_n, \boldsymbol{\lambda}_n)$. Consequently, 
	\begin{align*}
		\|g_n\|_{\mathfrak{M}_\Hex}
		=\rho_f(\textbf{z}_n, \boldsymbol{\lambda}_n)
		&=\|g_n(\underline{S}_n)\|  \\
		&=\|f(\underline{S}_n)\| \quad [\text{since $\underline{S}_n$ is subordinate to $\textbf{z}$ and $g_n=f$ on $\textbf{z}$}]\\
		&\leq \|f\|_{\infty, W} \quad [\text{$\underline{S}_n$ is subordinate to $W$ since $\textbf{z} \subseteq W$, and $f$ satisfies \eqref{eqn_303}}].
	\end{align*}
	Since $\|g_n\|_{\infty, \Hex} \leq \|g_n\|_{\mathfrak{M}_\Hex} \leq \|f\|_{\infty, W}$, the sequence $\{g_n\}$ is uniformly bounded. By Montel's theorem, one can find a subsequence $\{g_{n_k}\}$ of $\{g_n\}$ that converges pointwise to a function $g \in H^\infty(\Hex)$. Note that  $f(z_i)=g(z_i)$ for $i \in \N$. Therefore, $f=g$ on $W$ and so, $\|f\|_{\infty, W}=\|g\|_{\infty, W} \leq \|g\|_{\infty, \Hex}$. For every $\underline{T}$ in $\mathfrak{M}_\Hex$, an application of dominated convergence theorem gives that $g_{n_k}(\underline{T})$ converges to $g(\underline{T})$ in the weak-operator topology. Thus, $\|g(\underline{T})\| \leq \|f\|_{\infty, W}$ for all $\underline{T} \in \mathfrak{M}_\Hex$ and so, $\|g\|_{\infty, \Hex} \leq \|f\|_{\infty, W}$. The proof is now complete. \qed

	\medskip 
	
	For $W \subseteq \Hex$, we say that $W$ has the \textit{extension property} in $SA(\Hex)$ if for every non-zero $f \in \mathscr{HE}(W)$, there exists $g \in H^\infty(\Hex)$ such that $\frac{1}{\|f\|_{\infty, W}}g \in SA(\Hex), g|_W=f$ and $\|f\|_{\infty, W}=\|g\|_{\infty, \Hex}$. We conclude this section with the following characterization of subsets of $\Hex$ that possess the extension property in $SA(\Hex)$. The proof follows directly from Theorem \ref{thm_ext_H}.
	
	\begin{cor}
		A subset $W$ of $\Hex$ has the extension property in $SA(\Hex)$ if and only if $\|f(\underline{T})\| \leq \|f\|_{\infty, W}$ for every $f \in \mathscr{HE}(W)$ and $\underline{T} \in Q\Hex$ subordinate to $W$. 
	\end{cor}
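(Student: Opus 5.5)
The corollary is an immediate quantification of Theorem \ref{thm_ext_H} over all non-zero $f \in \mathscr{HE}(W)$. The only additional point is the zero function, which satisfies both conditions trivially. I would prove the two implications separately.

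For the forward direction, suppose $W$ has the extension property in $SA(\Hex)$, and let $f \in \mathscr{HE}(W)$ and $\underline{T} \in Q\Hex$ be subordinate to $W$.
\begin{enumerate}
\item If $f=0$ on $W$, then the zero function on a neighbourhood of $W$ is a holomorphic extension of $f$. Since $f(\underline{T})$ is independent of the chosen extension, $f(\underline{T})=0$, and so $\|f(\underline{T})\| \leq 0 = \|f\|_{\infty, W}$.
\item If $f \neq 0$, the extension property gives $g \in H^\infty(\Hex)$ with $\frac{1}{\|f\|_{\infty,W}}g \in SA(\Hex)$, $g|_W=f$ and $\|g\|_{\infty,\Hex}=\|f\|_{\infty,W}$. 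The necessity lemma proved just after the statement of Theorem \ref{thm_ext_H} then yields $\|f(\underline{T})\| \leq \|f\|_{\infty,W}$.
\end{enumerate}

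For the converse, assume $\|f(\underline{T})\| \leq \|f\|_{\infty,W}$ for every $f \in \mathscr{HE}(W)$ and every $\underline{T} \in Q\Hex$ subordinate to $W$. Fix a non-zero $f \in \mathscr{HE}(W)$. The hypothesis of the sufficiency part of Theorem \ref{thm_ext_H} holds for this $f$. That theorem therefore gives $g \in H^\infty(\Hex)$ with $g|_W=f$, $\|g\|_{\infty,\Hex}=\|f\|_{\infty,W}$ and $\frac{1}{\|f\|_{\infty,W}}g \in SA(\Hex)$. Since $f$ was an arbitrary non-zero element of $\mathscr{HE}(W)$, this is exactly the extension property.

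No step is a genuine obstacle. The only delicate points are the trivial zero-function case and the well-definedness of $f(\underline{T})$ for subordinate tuples, both of which the paper already addressed before Theorem \ref{thm_ext_H}.
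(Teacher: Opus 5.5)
Your proof is correct and takes the same approach as the paper, which simply says the corollary follows directly from Theorem \ref{thm_ext_H}: apply its necessity part for one direction and its sufficiency part for the other, to each non-zero $f \in \mathscr{HE}(W)$. Your explicit treatment of the zero function, where $f(\underline{T})=0$ because $f(\underline{T})$ does not depend on the chosen extension, fills in a trivial case the paper leaves implicit.
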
	
	
	\subsection*{Toeplitz corona theorem on the hexablock} 
	One of the key results in this article is the realization theorem for functions in the Schur-Agler class $SA(\Hex)$ for $\Hex$. To do so, the notion of admissible kernels associated with $\Hex$ is introduced. In this direction, we present the vector-valued analog of admissible kernels on $\Hex$. Let $\LS$ be a Hilbert space, and let $F$ be a subset of $\Hex$. A $\mathcal{B}(\LS)$-valued weak kernel of $F$ is a positive semi-definite function $k: F \times F \to \mathcal{B}(\LS)$, that is, 
	\[
	\overset{n}{\underset{i, j=1}{\sum}}\la k(z_i, z_j)v_j, v_i\ra_{\LS} \geq 0 
	\]
	for every finite set $\{z_1, \dotsc, z_n\} \subset F$ and vectors $v_1, \dotsc, v_n \in \LS$. In addition, if $k(z, z) \ne 0$ for all $z \in F$, we say that $k$ is a $\mathcal{B}(\LS)$-valued kernel. A $\mathcal{B}(\LS)$-valued kernel $k$ is said to be \textit{admissible} if
	\begin{enumerate}[leftmargin=*]
	\item $(z, w) \mapsto \left(1-z^{(3)}\overline{w}^{(3)}\right)k(z, w)$ \smallskip 
	\item $(z, w) \mapsto \left(1-\Psi(\al, z^{(2)}, z^{(3)}, z^{(4)})\overline{\Psi(\al, w^{(2)}, w^{(3)}, w^{(4)})}\right)k(z, w)$ \smallskip 
	\item $(z, w) \mapsto \left(1-\psi_{\al_1, \al_2}(z^{(1)}, z^{(2)}, z^{(3)}, z^{(4)})\overline{\psi_{\al_1, \al_2}(w^{(1)}, w^{(2)}, w^{(3)}, w^{(4)}})\right)k(z, w)$
\end{enumerate}
are positive semi-definite maps for all $\alpha \in \DC$ and $(\alpha_1, \alpha_2) \in \DC^2$. The class of scalar-valued admissible kernels on $\Hex$ is denoted by $AK(\Hex)$. For a compact set $K \subseteq \C^n$ and a subset $F$ of $\Hex$, a map $\xi: F \times F \to \mathcal{B}(C(K), \mathcal{B}(\LS))$ is called \textit{completely positive kernel} if for every $n \in \N, \{v_1, \dotsc, v_n\} \subset \LS, \{z_1, \dotsc, z_n\} \subset F$ and $\{h_1, \dotsc, h_n\} \subset C(K)$, we have
	\[
	\overset{n}{\underset{i, j=1}{\sum}}\la \xi(z_i, z_j)(h_i\overline{h}_j)v_j, v_i\ra_\LS \geq 0.
	\]
	If $\LS=\C$, then $\mathcal{B}(C(K), \LS)$ is the dual space $C(K)^*$ of $C(K)$. In this case,  a completely positive kernel $\xi: F \times F \to C(K)^*$ is simply called a \textit{positive kernel}. To prove Toeplitz corona theorem on $\Hex$, we require an operator-valued analogue of the realization theorem on $\Hex$. Since its proof can be simply obtained from the scalar-valued case by the necessary modifications, we shall state the result without proof. We mention that the proof of the scalar-valued realization theorem on $\Hex$. Accordingly, we first establish vector-valued analogues of these two results. As a basic step, we state the following result and its proof follows the same argument as that of Proposition \ref{prop_prelim_I_H}.

	\begin{prop}\label{prop_prelim_I_vv_H}
		Let $F$ be a subset of $\Hex$ and let $K$ be a compact subset of $\C^d$. If $\xi: F \times F \to  \mathcal{B}(C(K), \mathcal{B}(\LS))$ is a completely positive kernel, then there exist a Hilbert space $\HS$, a function $L: F \to \mathcal{B}(C(K), \mathcal{B}(\HS, \LS))$ and a unital $*$-representation $\rho: C(K) \to \mathcal{B}(\HS)$ such that
		\[
		\xi(z, w)(f\overline{g})=L(z)(f)(L(w)(g))^* \quad \text{and} \quad L(z)(fg)^*=\rho(f)^*L(z)(g)^*
		\]	
		for all $f, g \in C(K)$ and $z, w  \in F$. 
	\end{prop}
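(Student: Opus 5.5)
The plan is to mimic the proof of Proposition \ref{prop_prelim_I_H}, replacing the scalar Kolmogorov factorization by its operator-valued version. The first step is to define a scalar positive semi-definite function on an enlarged index set. Put $Y=F\times C(K)\times \LS$ and set
\[
\Gamma\big((z,h,v),(w,k,u)\big)=\la \xi(z,w)(h\overline{k})u, v\ra_\LS .
\]
Complete positivity of $\xi$ says precisely that $\sum \overline{c}_ic_j\Gamma(y_i,y_j)\ge 0$ for finitely many $y_i=(z_i,h_i,v_i)$, after absorbing the scalars $c_i$ into the vectors $v_i$. One caveat: the displayed sum in the definition carries the indices in the order $\la\xi(z_i,z_j)(h_i\overline h_j)v_j,v_i\ra$, and I would fix the convention so that $\Gamma$ is genuinely a kernel. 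By Theorem 2.53 in \cite{Agler_McCarthy} there is then a Hilbert space $\HS=\overline{\mathrm{span}}\{\gamma(y)\}$ and a map $\gamma:Y\to\HS$ with $\Gamma(y,y')=\la\gamma(y'),\gamma(y)\ra$, in whichever order matches the convention chosen.

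The second step is to build $L$. For fixed $z$ and $h$, I would define $M(z,h):\LS\to\HS$ by $v\mapsto\gamma(z,h,v)$. Linearity in $v$ follows because the norm of $\gamma(z,h,v_1+v_2)-\gamma(z,h,v_1)-\gamma(z,h,v_2)$ expands into $\Gamma$-terms that cancel by sesquilinearity. Boundedness follows from $\|\gamma(z,h,v)\|^2=\la\xi(z,z)(|h|^2)v,v\ra\le\|\xi(z,z)\|\,\|h\|_\infty^2\|v\|^2$, using that the positive map $\xi(z,z)$ is bounded. The same estimate gives linearity and boundedness in $h$. Then set $L(z)(h)=M(z,h)^*\in\mathcal B(\HS,\LS)$. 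This yields
\[
L(z)(f)\,L(w)(g)^*=\xi(z,w)(f\overline g),
\]
because $\la L(z)(f)L(w)(g)^*u,v\ra=\la\gamma(w,g,u),\gamma(z,f,v)\ra$. This identity is matched with $\Gamma$, up to the conjugation convention, by polarization.

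The third step defines the representation. Set $\rho(f)\gamma(z,h,v)=\gamma(z,fh,v)$ and extend linearly. The key point, which I expect to be the main (if routine) obstacle, is showing that this is well defined and bounded. For $x=\sum\gamma(z_i,h_i,v_i)$ I would show $\|\rho(f)x\|^2\le\|f\|_\infty^2\|x\|^2$. The difference $\|f\|_\infty^2\|x\|^2-\|\rho(f)x\|^2$ equals $\sum\la\xi(z_i,z_j)\big((\|f\|^2_\infty-|f|^2)h_i\overline h_j\big)v_j,v_i\ra$. Writing $\|f\|_\infty^2-|f|^2=|q|^2$ with $q=(\|f\|_\infty^2-|f|^2)^{1/2}\in C(K)$, this becomes complete positivity applied to $qh_i$, hence it is nonnegative. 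This gives both well-definedness and the bound. Multiplicativity, unitality, and $\rho(\overline f)=\rho(f)^*$ are then checked on the generating vectors $\gamma(z,h,v)$ directly from the formula for $\Gamma$. Finally, $\rho(f)M(z,g)=M(z,fg)$ holds by construction, and taking adjoints gives $L(z)(fg)^*=\rho(f)L(z)(g)^*$. This is the stated intertwining relation, up to replacing $f$ by $\overline f$, i.e.\ the $\rho(f)^*$ form, and the convention would be fixed to match the statement.
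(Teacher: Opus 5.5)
Your proposal is correct and follows essentially the paper's route. The paper simply says to repeat the scalar argument of Proposition \ref{prop_prelim_I_H}: a Kolmogorov factorization on an enlarged index set, with $\rho$ defined by multiplication on the generating vectors. Your $|q|^2$ estimate supplies the well-definedness and boundedness of $\rho$, which the paper leaves as a ``simple calculation.''

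One bookkeeping point should be stated outright rather than left to ``convention.'' With $\Gamma(y,y')=\langle\gamma(y'),\gamma(y)\rangle$, the map $h\mapsto\gamma(z,h,v)$ is conjugate-linear, not linear, and this is exactly why $L(z)=M(z,\cdot)^*$ comes out linear. Consequently $\gamma(z,h,v)\mapsto\gamma(z,fh,v)$ is conjugate-linear in $f$, so the genuine unital $*$-representation is $f\mapsto\rho(\overline f)=\rho(f)^*$. This is precisely where the $\rho(f)^*$ in the statement comes from.
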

	
	Let $k_1, k_2$ be $\mathcal{B}(\LS)$-valued functions on $\Hex \times \Hex$. Following the notations in Definition 11.25 of \cite{Agler_McCarthy}, we set $k_1\oslash k_2: \Hex \times \Hex \to \mathcal{B}(\LS \otimes \LS)$ as $(k_1 \oslash k_2)(z, w)=k_1(z, w)\otimes k_2(z, w)$ for $z, w \in \Hex$. We skip its proof as it is a routine adaptation of the scalar-valued arguments presented in Theorem \ref{thm_sa_H}.
	
	\begin{thm}\label{thm_sa_vv_H}
		Let $g: \Hex \times \Hex \to \mathcal{B}(\LS)$ be a self-adjoint function, that is, $g(z, w)=g(w, z)^*$. If $g\oslash k$ is positive semi-definite for any $\mathcal{B}(\LS)$-valued admissible kernel $k$ on $\Hex$, then there exist completely positive kernels $\xi: \Hex \times \Hex \to \mathcal{B}(C(\DC^2), \mathcal{B}(\LS)), \nabla: \Hex \times \Hex \to \mathcal{B}(C(\DC), \mathcal{B}(\LS))$ and a weak $\mathcal{B}(\LS)$-valued kernel $\delta$ on $\Hex$ such that
		\[
		g(z, w)=\xi(z, w)(1-E(z)\overline{E(w)})+\nabla(z, w)(1-e(z)\overline{e(w)})+(1-z^{(3)}\overline{w}^{(3)})\delta(z, w)
		\]
		for all $z, w \in \Hex$. 
	\end{thm}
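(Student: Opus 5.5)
We mimic the proof of Theorem \ref{thm_sa_H}: first obtain the decomposition on every finite subset $F=\{z_1,\dotsc,z_n\}\subseteq\Hex$, and then pass to all of $\Hex$ by a compactness (Kurosh) argument.

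\textbf{The cone on a finite set.} Fix a finite $F$ and work in the real space of self-adjoint block matrices in $M_n(\mathcal{B}(\LS))$. If $\LS$ is infinite dimensional, first reduce to finite-dimensional subspaces $\mathcal{M}\subseteq\LS$ by compressing $g$, and recover $\LS$ afterwards by a further limiting argument. Let $\mathfrak{C}_F(\LS)$ be the set of block matrices
\[
\left[\xi(z_i,z_j)(1-E(z_i)\overline{E(z_j)})+\nabla(z_i,z_j)(1-e(z_i)\overline{e(z_j)})+(1-z_i^{(3)}\overline{z}_j^{(3)})\delta(z_i,z_j)\right]_{i,j=1}^n,
\]
where $\xi,\nabla$ range over completely positive kernels on $F$ and $\delta$ over $\mathcal{B}(\LS)$-valued weak kernels on $F$. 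As in the scalar case, $\mathfrak{C}_F(\LS)$ is a convex cone. Closedness follows as in Lemma 3.4 of \cite{Drit2007_I}: normalize by evaluating at $1$, and use weak-$*$ compactness of bounded sets of completely positive maps. Every positive block matrix $P$ lies in the cone, via $\delta=P\cdot(1-z^{(3)}\overline{w}^{(3)})^{-1}$ (a Schur product of a positive block matrix with a positive scalar kernel). Hence the cone has nonempty interior.

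\textbf{Separation.} Suppose $G=[g(z_i,z_j)]\notin\mathfrak{C}_F(\LS)$. Hahn--Banach gives a functional $L(A)=\mathrm{tr}(AC)$ with $L(G)<0$ and $L\ge 0$ on the cone, where $C$ is a self-adjoint block matrix over $\LS$. Testing $L$ on the rank-one positive matrices $[v_i\otimes \overline{v_j}]$, which lie in the cone, shows that the reshuffled $C_*(z_i,z_j)$ (the transpose of the blocks $C_{ji}$) is a $\mathcal{B}(\LS)$-valued weak kernel. Testing on $[(1-z_i^{(3)}\overline{z}_j^{(3)})v_i\otimes\overline{v_j}]$, on the analogous matrices with $\Psi(\al,\cdot)$, and on those with $\psi_{\al_1,\al_2}$ (taking $\xi,\nabla$ to be point evaluations at $\al$ or $(\al_1,\al_2)$ tensored with rank-one maps) shows that $C_*$ is weakly admissible.

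Perturb $C_*$ by $\epsilon k_0$, where $k_0=I_{\LS}\cdot k$ for a fixed scalar admissible $k$, for instance the one built from $\delta_0$ and normalized. This gives an admissible $\mathcal{B}(\LS)$-valued kernel, so the hypothesis yields $g\oslash(C_*+\epsilon k_0)\succcurlyeq 0$. Letting $\epsilon\to0$ gives $g\oslash C_*\succcurlyeq0$ on $F$. Pairing this positive block matrix with the maximally entangled vector $\sum_i\sum_s e_i\otimes e_s\otimes e_s$ (over an orthonormal basis of $\mathcal{M}$) produces exactly $\mathrm{tr}(GC)\ge0$, which is a contradiction. This step, matching $\oslash$ with the trace pairing, is where the main obstacle lies: one has to arrange the transpose convention in $C_*$ so that the diagonal contraction of $g\oslash C_*$ equals $\mathrm{tr}(GC)$. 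Restricting to finite-dimensional $\mathcal{M}$ first makes this identity legitimate.

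\textbf{Globalization.} For each finite $F$ (and each finite-dimensional $\mathcal{M}$), the set of triples $(\xi,\nabla,\delta)$ representing $g$ on $F$ is nonempty. Normalization bounds the norms by $\|g(z,z)\|$-type quantities; this follows by evaluating at a diagonal point $z=w$ and using $1-|E(z)|^2\ge c_z>0$, $1-|e(z)|^2\ge c_z$ and $1-|z^{(3)}|^2\ge c_z$. So the set is compact in the product of weak-$*$/WOT topologies, and restrictions to subsets are continuous. By Kurosh's theorem, exactly as in Theorem 11.5 of \cite{Agler_McCarthy}, the inverse limit over the directed set of pairs $(F,\mathcal{M})$ is nonempty. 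This yields $\xi$, $\nabla$ and $\delta$ on all of $\Hex$ satisfying the stated identity.
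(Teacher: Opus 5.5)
Your proposal is the routine adaptation of the proof of Theorem \ref{thm_sa_H} that the paper has in mind: finite-set cone, Hahn--Banach separation, a weakly admissible $C_*$, perturbation to an admissible kernel, and Kurosh. Your handling of the transpose convention, so that pairing $g\oslash C_*$ with $\sum_s e_s\otimes e_s$ returns $\mathrm{tr}(GC)$, and of infinite-dimensional $\LS$ via finite-dimensional compressions, is more careful than anything the paper writes down.

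One step is wrong as stated. The kernel $\delta_0(z,w)=(1-z^{(3)}\overline{w}^{(3)})^{-1}$, normalized or not, is not admissible. It is identically $1$ on points with $z^{(3)}=0$. For example, with $z_1=(0,0,0,0)$ and $z_2=(\tfrac12,0,0,0)$ you get $[(1-\psi_{0,0}(z_i)\overline{\psi_{0,0}(z_j)})\delta_0(z_i,z_j)]=\begin{bmatrix}1&1\\1&3/4\end{bmatrix}$, which has negative determinant.

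Use instead the diagonal kernel $k_0(z,w)=I_{\LS}$ for $z=w$ and $k_0(z,w)=0$ otherwise. It is admissible because $|z^{(3)}|$, $|\Psi(\al,z^{(2)},z^{(3)},z^{(4)})|$ and $|\psi_{\al_1,\al_2}(z)|$ are all $<1$ at every point of $\Hex$. Then $C_*+\epsilon k_0$ is admissible, and the rest of your argument goes through unchanged.
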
		
	
	Before stating the operator-valued realization theorem for $\Hex$, we introduce a vector-valued analogue of the Schur-Agler class $SA(\Hex)$. For a domain $\Omega$ in $\C^d$ and Hilbert spaces $\LS_1, \LS_2$, let $g: \Omega \to \mathcal{B}(\LS_1, \LS_2)$ be an analytic function. Suppose $\underline{T}=(T_1, \dotsc, T_d)$ is a commuting tuple of operators on a Hilbert space $\LS$ with $\sigma_T(\underline{T}) \subseteq \Omega$. Then $g(\underline{T}): \LS \otimes \LS_1 \to \LS \otimes \LS_2$ is an bounded linear map defined by analytic functional calculus. Let us discuss from \cite{Esch_Put} an equivalent description of $g(\underline{T})$. Given an open neighbourhood $U_0$ of $\sigma_T(\underline{T})$, there is a unique continuous linear map $g \mapsto g(\underline{T})$ from $\text{Hol}(U_0, \mathcal{B}(\LS_1, \LS_2)) \equiv \text{Hol}(U_0)\otimes \mathcal{B}(\LS_1, \LS_2)$ into $\mathcal{B}(\LS \otimes \LS_1, \LS \otimes \LS_2)$ taking $g \otimes B$ into $g(\underline{T}) \otimes B$ for $g \in \text{Hol}(U_0)$ and $B \in \mathcal{B}(\LS_1, \LS_2)$. Therefore, it follows that 
	\[
	\la g(\underline{T})(h_1 \otimes x), h_2 \otimes y\ra=\la g_{x, y}(\underline{T})h_1, h_2 \ra 
	\]
	for every $h_1, h_2 \in \LS, x \in \LS_1$ and $y \in \LS_2$, where $g_{x, y}: \Omega \to \C$ is the scalar-valued holomorphic map defined by $g_{x, y}(z)=\la g(z)x, y\ra$. An interested reader is referred to \cite{Ambrozie, Esch_Put} for further details.

	\smallskip 
	
	For Hilbert spaces $\LS_1, \LS_2$, the \textit{$\mathcal{B}(\LS_1, \LS_2)$-valued Schur-Agler class} $SA_\Hex(\LS_1, \LS_2)$ is the collection of all holomorphic functions $g: \Hex \to \mathcal{B}(\LS_1, \LS_2)$ satisfying $\|g(\underline{T})\| \leq 1$ for every $\underline{T} \in \mathfrak{M}_\Hex$. The following theorem extends the realization theorem established in Theorem \ref{thm_realization_H} to the operator-valued setting. It characterizes the class $SA_{\Hex}(\LS_1,\LS_2)$ in terms of a realization formula. Since the proof is a routine adaptation of that of Theorem \ref{thm_realization_H}, we omit the details.

	\begin{thm}[Vector-valued realization theorem for $\Hex$]\label{thm_realization_vv_H}
		Let $\LS_1$ and  $\LS_2$ be Hilbert spaces. For a function $f: \Hex \to \mathcal{B}(\LS_1, \LS_2)$, the following statements are equivalent: 
		\begin{enumerate}[leftmargin=*]
			\item[$(1)$] $g \in SA_\Hex(\LS_1, \LS_2)$;
			
			\item[$(2)$] $(z, w) \mapsto (I_{\LS_2}-f(z)f(w)^*)\oslash k(z, w)$ is a positive semi-definite function for all $\mathcal{B}(\LS_2)$-valued admissible kernel $k$ on $\Hex$;
			
			\item[$(3)$] there exist a weak $\mathcal{B}(\LS)$-valued kernel $\delta$ on $\Hex$ and completely positive kernels $\xi: \Hex \times \Hex \to \mathcal{B}(C(\DC^2), \mathcal{B}(\LS)), \nabla: \Hex \times \Hex \to \mathcal{B}(C(\DC), \mathcal{B}(\LS))$ such that for all $z, w \in \Hex$,
				\[
				I_{\LS_2}-f(z)f(w)^*=\xi(z, w)(1-E(z)\overline{E(w)})+\nabla(z, w)(1-e(z)\overline{e(w)})+(1-z^{(3)}\overline{w}^{(3)})\delta(z, w),
				\]
				where $E(z)$ and $e(z)$ are as in \eqref{eqn_E(z)};
			
			\item[$(4)$] there exist Hilbert spaces $\HS_1, \HS_2, \HS_3$,  two unital $*$-representations $\rho_1: C(\DC^2) \to \mathcal{B}(\HS_1), \rho_2: C(\DC) \to \mathcal{B}(\HS_2)$ and a unitary $V=\begin{bmatrix}
				A & B \\
				C & D \\
			\end{bmatrix}: \LS_1 \oplus \HS \to \LS_2 \oplus \HS$, where $\HS=\HS_1 \oplus \HS_2 \oplus \HS_3$ such that 
			\[
			f(z)=A+B\begin{bmatrix} \rho_1(E(z)) & 0 & 0\\ 
				0 & \rho_2(e(z)) & 0\\
				0 & 0 & z^{(3)}I_{\HS_3}
			\end{bmatrix} \left(I-D\begin{bmatrix} \rho_1(E(z)) & 0 & 0\\ 
				0 & \rho_2(e(z)) & 0\\
				0 & 0 & z^{(3)}I_{\HS_3}
			\end{bmatrix}\right)^{-1}C.
			\]  
		\end{enumerate}
	\end{thm}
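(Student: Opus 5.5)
I will follow the cycle of implications $(1)\Rightarrow(2)\Rightarrow(3)\Rightarrow(4)\Rightarrow(1)$, exactly as in Theorem \ref{thm_realization_H}, replacing scalars by operators throughout.

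For $(1)\Rightarrow(2)$, I first assume $f$ is holomorphic on a neighbourhood of $\overline{\Hex}$. Let $k$ be a $\mathcal{B}(\LS_2)$-valued admissible kernel, and consider the vector-valued reproducing kernel Hilbert space $\HS(k)$ of $\LS_2$-valued functions. The adjoints of the coordinate multipliers act by $M_{z^{(j)}}^*\,k(\cdot,w)v=\overline{w}^{(j)}k(\cdot,w)v$. Admissibility of $k$ gives the contractive inequalities for $\Psi$ and $\psi_{\al_1,\al_2}$ on the spaces spanned by finitely many $k(\cdot,z_i)v_i$. After scaling by $r<1$ and using the quasi-balanced structure, the compressed tuple lies in $\mathfrak{M}_\Hex$.

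The identity $\la g(\underline T)(h_1\otimes x),h_2\otimes y\ra=\la g_{x,y}(\underline T)h_1,h_2\ra$ identifies $\widehat f(r\cdot \underline T^*)$ with the adjoint-multiplier action $k(\cdot,w)v\mapsto k(\cdot,w)\,f(r\cdot w)^*v$, where $\widehat f(z)=f(\bar z)^*$. The Schur--Agler hypothesis then yields positivity of $(I-f(r\cdot z)f(r\cdot w)^*)\oslash k$, in the form $\sum\la (I-f(z_i)f(z_j)^*)k(z_i,z_j)v_j,v_i\ra\ge 0$. Letting $r\uparrow 1$, and passing from $f$ to $f_r$ for general $f\in SA_\Hex(\LS_1,\LS_2)$ via Oka--Weil approximation as before, completes this step.

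$(2)\Rightarrow(3)$ is exactly Theorem \ref{thm_sa_vv_H} applied to $g(z,w)=I-f(z)f(w)^*$.

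For $(3)\Rightarrow(4)$, I use Proposition \ref{prop_prelim_I_vv_H} to factor $\xi$ and $\nabla$ as $L_1(z)(\cdot)L_1(w)(\cdot)^*$ and $L_2(z)(\cdot)L_2(w)(\cdot)^*$, with intertwining representations $\rho_1,\rho_2$, and Kolmogorov factorization to write $\delta(z,w)=G(z)G(w)^*$. This rewrites the identity as a Gram-type equality between operators $\LS_2\to\LS_2\oplus\HS$. Explicitly,
\[
\begin{bmatrix} I\\ X(z)^*\Gamma(z)^*\end{bmatrix}\ \text{and}\ \begin{bmatrix} f(z)^*\\ \Gamma(z)^*\end{bmatrix}
\]
have equal inner products, where $\Gamma(z)=[L_1(z)(1)\ \ L_2(z)(1)\ \ G(z)]$. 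This defines an isometry between the closed spans of their ranges, which I extend to a unitary after enlarging $\HS$ by an infinite-dimensional summand. I then read off the transfer-function formula for $f(z)^*$ and take adjoints, using the unitary $V^*$ to get the stated form for $f$.

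For $(4)\Rightarrow(1)$, I mimic Lemma \ref{lem_prelim_III_H} and Steps (1)--(4) of the scalar proof. For simple $\rho_1,\rho_2$, the operator $X(\underline T)$ is a contraction on $\HS\otimes\mathcal K$, and
\[
I-f(\underline T)^*f(\underline T)=R^*(I-X(\underline T)^*X(\underline T))R\ge 0 .
\]
General representations are handled by approximating with the simple nets $\rho_{i,\beta}$ built from spectral measures. This gives pointwise convergence in norm at each point of $\Hex$, hence weak-operator convergence of $f_\beta(\underline T)$ via the Martinelli-kernel representation applied to each scalar function $f_{x,y}$.

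I expect the main obstacle to be $(1)\Rightarrow(2)$ in the operator-valued setting. The delicate points are justifying the compression to a finite span of kernel functions, where the span is now infinite-dimensional if $\LS_2$ is, and checking that the tensor-product functional calculus commutes with the adjoint and the approximation limits. I would handle this by working with finitely many vectors $v_i$ and the scalar functions $f_{x,y}$.
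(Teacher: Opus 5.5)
Your cycle $(1)\Rightarrow(2)\Rightarrow(3)\Rightarrow(4)\Rightarrow(1)$ is the route the paper intends; it omits the proof as a routine adaptation of Theorem \ref{thm_realization_H}. Your sketches of $(2)\Rightarrow(3)$, $(3)\Rightarrow(4)$ and $(4)\Rightarrow(1)$ match it.

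The problem is in $(1)\Rightarrow(2)$, the step you yourself flag as the crux. As written it is the scalar-kernel computation, and it does not produce the $\oslash$ condition. Since $k$ is $\mathcal{B}(\LS_2)$-valued and $f(r\cdot w)^*v\in\LS_1$, the expression $k(\cdot,w)f(r\cdot w)^*v$ is undefined. The operator $\widehat f(r\cdot\underline T^*)$ does not act on $\HS_n(k)$, the closed span of the $k(\cdot,z_j)u$. It maps $\HS_n(k)\otimes\LS_2$ into $\HS_n(k)\otimes\LS_1$ by $k(\cdot,w)u\otimes v\mapsto k(\cdot,w)u\otimes f(r\cdot w)^*v$, with two independent vectors $u,v\in\LS_2$.

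Consequently the inequality you arrive at, $\sum_{i,j}\langle(I-f(z_i)f(z_j)^*)k(z_i,z_j)v_j,v_i\rangle\ge0$, is a product of noncommuting operators, not the $\oslash$ condition, and it is false in general. Take $k(z,w)=K$ if $z=w$ and $0$ otherwise, with $K$ a fixed positive definite matrix. This is an admissible kernel, because every test function has modulus $<1$ on $\Hex$. A constant contraction $f\equiv F$ lies in $SA_\Hex(\LS_1,\LS_2)$. Yet $\langle(I-FF^*)Kv,v\rangle$ need not even be real.

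The repair is to test contractivity of $\widehat f(r\cdot\underline T^*)$ on arbitrary finite sums $\sum_{j,l}k(\cdot,z_j)u_{jl}\otimes v_{jl}$. Since $\langle k(\cdot,z_j)u\otimes v,\,k(\cdot,z_i)u'\otimes v'\rangle=\langle k(z_i,z_j)u,u'\rangle\langle v,v'\rangle$, this gives
\[
\sum_{i,j=1}^n\big\langle\big((I-f(r\cdot z_i)f(r\cdot z_j)^*)\otimes k(z_i,z_j)\big)x_j,x_i\big\rangle\ge0,\qquad x_j=\sum_l v_{jl}\otimes u_{jl}.
\]
Density of such $x_j$ in $\LS_2\otimes\LS_2$, followed by $r\uparrow1$, yields exactly the positivity of $(I-f(z)f(w)^*)\oslash k$. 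Non-elementary tensors are essential here. Testing only on $k(\cdot,z_j)u_j\otimes v_j$ gives merely the weaker inequality $\sum_{i,j}\langle k(z_i,z_j)u_j,u_i\rangle\langle(I-f(z_i)f(z_j)^*)v_j,v_i\rangle\ge0$.

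There are two smaller slips in $(3)\Rightarrow(4)$:
\begin{enumerate}
\item The family $\begin{bmatrix}f(z)^*\\ \Gamma(z)^*\end{bmatrix}$ maps $\LS_2$ into $\LS_1\oplus\HS$, not $\LS_2\oplus\HS$. So the lurking isometry runs from a subspace of $\LS_2\oplus\HS$ into $\LS_1\oplus\HS$, and the adjoint of its unitary extension is the $V$ of $(4)$.
\item The infinite-dimensional summand you add must be absorbed into one of $\HS_1,\HS_2,\HS_3$ (e.g.\ $\HS_3$), so that $X(z)$ is still defined on the enlarged space.
\end{enumerate}
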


We are now in a position to establish a vector-valued version of the Toeplitz corona theorem for $\Hex$. For given $\varphi_1,\dotsc,\varphi_d\in H^\infty(\Hex)$, Toeplitz corona theorem characterizes the existence of functions $f_1, \dotsc, f_d\in H^\infty(\Hex)$ such that $\varphi_1f_1+\dotsc +\varphi_df_d=1$ with $f_1, \dotsc, f_d$ subjected to a certain norm bound condition. The vector-valued formulation provides a natural extension of this result and the scalar-valued theorem follows as an immediate consequence.

\begin{thm}\label{thm_TC_H}
	Let $\LS_1, \LS_2$ and $\LS_3$ be Hilbert spaces. For functions $f_{12}: \Hex \to \mathcal{B}(\LS_1, \LS_2)$ and $f_{32}: \Hex \to \mathcal{B}(\LS_3, \LS_2)$, the following are equivalent: 
	\begin{enumerate}[leftmargin=*]
		\item[$(1)$] there exists $f_{31} \in SA_\Hex(\LS_3, \LS_1)$ such that $f_{12}(z)f_{31}(z)=f_{32}(z)$ for every $z \in \Hex$;
		\item[$(2)$] for every finite subset $F=\{z_1, \dotsc, z_n\} \subset \Hex$,
		\[
		\begin{bmatrix}
			\left(f_{12}(z_i)f_{12}(z_j)^*-f_{32}(z_i)f_{32}(z_j)^*\right)\otimes k(z_i, z_j)
		\end{bmatrix}_{i, j=1}^n \geq 0
		\]
		for every $\mathcal{B}(\LS_2)$-valued admissible kernel $k$ on $\Hex$;
		\item[$(3)$] there exist a weak $\mathcal{B}(\LS_2)$-valued kernel $\delta$ on $\Hex$ and completely positive kernels $\xi: \Hex \times \Hex \to \mathcal{B}(C(\DC^2), \mathcal{B}(\LS_2)), \nabla: \Hex \times \Hex \to \mathcal{B}(C(\DC), \mathcal{B}(\LS_2))$ such that for all $z, w \in \Hex$,
		\begin{align*}
			&f_{12}(z)f_{12}(w)^*-f_{32}(z)f_{32}(w)^*\\
			&=\xi(z, w)(1-E(z)\overline{E(w)})+\nabla(z, w)(1-e(z)\overline{e(w)})+(1-z^{(3)}\overline{w}^{(3)})\delta(z, w),
		\end{align*}
		where the maps $z\mapsto E(z)$ and $z\mapsto e(z)$ are as in \eqref{eqn_E(z)}.	 	
	\end{enumerate}
\end{thm}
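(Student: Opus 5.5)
The plan is to prove the cycle $(1)\Rightarrow(2)\Rightarrow(3)\Rightarrow(1)$. The Schur-Agler side is handled by the vector-valued realization theorem (Theorem \ref{thm_realization_vv_H}). The cone side is handled by Theorem \ref{thm_sa_vv_H}. The lurking-isometry construction from the proof of Theorem \ref{thm_realization_H} is reused for the converse.

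\emph{Step $(1)\Rightarrow(2)$.} Let $f_{31}\in SA_\Hex(\LS_3,\LS_1)$ with $f_{12}f_{31}=f_{32}$. By Theorem \ref{thm_realization_vv_H}, $(I_{\LS_1}-f_{31}(z)f_{31}(w)^*)\oslash k'$ is positive semi-definite for every $\mathcal{B}(\LS_1)$-valued admissible kernel $k'$. Now
\[
f_{12}(z)f_{12}(w)^*-f_{32}(z)f_{32}(w)^*=f_{12}(z)\big(I_{\LS_1}-f_{31}(z)f_{31}(w)^*\big)f_{12}(w)^*.
\]
For a $\mathcal{B}(\LS_2)$-valued admissible kernel $k$, I would factor $k(z,w)=\Gamma(z)\Gamma(w)^*$ via Kolmogorov. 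I would then pass from the $\LS_1$ statement to the $\LS_2$ one by conjugating with $f_{12}(z)\otimes I$. Concretely, the realization in $(4)$ of Theorem \ref{thm_realization_vv_H} gives
\[
I-f_{31}(z)f_{31}(w)^*=H(z)\big(I-X(z)X(w)^*\big)H(w)^*,
\]
with $H(z)=B(I-X(z)D)^{-1}$ up to the usual adjustments. The product with $k$ then becomes a sum of terms of the form $(1-\varphi(z)\overline{\varphi(w)})k(z,w)$, where $\varphi$ runs over $E(\cdot)(\alpha_1,\alpha_2)$, $e(\cdot)(\alpha)$ and $z^{(3)}$. These terms are integrated against the spectral measures of $\rho_1,\rho_2$. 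Each is positive by admissibility of $k$, and Schur products with the positive kernel $H(z)\otimes$ preserve positivity. I expect this sandwiching to go through just as in the scalar implication $(1)\Rightarrow(2)$.

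\emph{Step $(2)\Rightarrow(3)$.} Apply Theorem \ref{thm_sa_vv_H} to the self-adjoint function $g(z,w)=f_{12}(z)f_{12}(w)^*-f_{32}(z)f_{32}(w)^*$. Hypothesis $(2)$ says exactly that $g\oslash k\succcurlyeq 0$ for all $\mathcal{B}(\LS_2)$-valued admissible kernels, so the decomposition in $(3)$ follows immediately.

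\emph{Step $(3)\Rightarrow(1)$, the main obstacle.} First use Proposition \ref{prop_prelim_I_vv_H} to write
\[
\xi(z,w)(h\bar h')=L_1(z)(h)L_1(w)(h')^*,\qquad \nabla(z,w)(u\bar u')=L_2(z)(u)L_2(w)(u')^*,
\]
with representations $\rho_1,\rho_2$, and factor $\delta(z,w)=G(z)G(w)^*$. Put $\Gamma(z)^*=\big(L_1(z)(1)^*,L_2(z)(1)^*,G(z)^*\big)\in\mathcal{B}(\LS_2,\HS)$. Rearranging $(3)$ gives
\[
f_{12}(z)f_{12}(w)^*+\Gamma(z)\Gamma(w)^*=f_{32}(z)f_{32}(w)^*+\Gamma(z)X(z)X(w)^*\Gamma(w)^*.
\]
Hence the map $V:\begin{bmatrix}f_{32}(w)^*y\\ X(w)^*\Gamma(w)^*y\end{bmatrix}\mapsto\begin{bmatrix}f_{12}(w)^*y\\ \Gamma(w)^*y\end{bmatrix}$, from (a closed subspace of) $\LS_3\oplus\HS$ into $\LS_1\oplus\HS$, is an isometry. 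After enlarging $\HS$ by an infinite-dimensional summand, extend it to a unitary (or contraction) $\begin{bmatrix}A&B\\C&D\end{bmatrix}$. Solving the second row yields $\Gamma(w)^*=(I-CX(w)^*)^{-1}\cdots$. Substituting into the first row gives $f_{12}(w)^*=\big(A+BX(w)^*(I-DX(w)^*)^{-1}C\big)^*$-type relation. This is read as $f_{12}(z)f_{31}(z)=f_{32}(z)$ with $f_{31}(z)=\tilde A+\tilde BX(z)(I-\tilde DX(z))^{-1}\tilde C$, where $\tilde V=V^*$ is unitary $\LS_3\oplus\HS\to\LS_1\oplus\HS$. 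By $(4)\Rightarrow(1)$ of Theorem \ref{thm_realization_vv_H}, $f_{31}\in SA_\Hex(\LS_3,\LS_1)$.

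The delicate points are in this last step. The adjoints have to be arranged correctly so that $f_{31}$ is holomorphic rather than antiholomorphic, and the unitary extension must be allowed to have different input/output spaces $\LS_3,\LS_1$. If the dimensions do not match, I would add summands to both $\LS_3\oplus\HS$ and $\LS_1\oplus\HS$ and accept a contraction $V$. The realization theorem's direction $(4)\Rightarrow(1)$ still applies to contractive colligations, via the same simple-representation approximation argument.
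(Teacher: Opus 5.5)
Your $(2)\Rightarrow(3)$ is the paper's argument, and your $(1)\Rightarrow(2)$ is sound (see the last paragraph). The key step $(3)\Rightarrow(1)$, however, fails as written, because your rearrangement of $(3)$ has the wrong sign. Since $L_1(z)(E(z))=L_1(z)(1)\rho_1(E(z))$ and $L_2(z)(e(z))=L_2(z)(1)\rho_2(e(z))$, hypothesis $(3)$ reads
\[
f_{12}(z)f_{12}(w)^*-f_{32}(z)f_{32}(w)^*=\Gamma(z)\bigl(I-X(z)X(w)^*\bigr)\Gamma(w)^*,
\]
that is, $f_{12}(z)f_{12}(w)^*+\Gamma(z)X(z)X(w)^*\Gamma(w)^*=f_{32}(z)f_{32}(w)^*+\Gamma(z)\Gamma(w)^*$. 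In your identity the terms $\Gamma(z)\Gamma(w)^*$ and $\Gamma(z)X(z)X(w)^*\Gamma(w)^*$ are interchanged. This amounts to assuming $f_{32}(z)f_{32}(w)^*-f_{12}(z)f_{12}(w)^*=\Gamma(z)(I-X(z)X(w)^*)\Gamma(w)^*$ instead.

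Consequently your map $f_{32}(w)^*y\oplus X(w)^*\Gamma(w)^*y\mapsto f_{12}(w)^*y\oplus \Gamma(w)^*y$ is not isometric under $(3)$. Even granting it, solving its two rows gives $\Gamma(w)^*=(I-DX(w)^*)^{-1}Cf_{32}(w)^*$ and $f_{12}(w)^*=\bigl(A+BX(w)^*(I-DX(w)^*)^{-1}C\bigr)f_{32}(w)^*$. That is a factorization $f_{12}=f_{32}h$ with $h$ taking values in $\mathcal{B}(\LS_1,\LS_3)$, the reverse of the factorization $f_{32}=f_{12}f_{31}$ required in $(1)$. The sentence ``this is read as $f_{12}f_{31}=f_{32}$'' does not follow, and passing to $V^*$ cannot convert one factorization into the other. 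As you state it, your $V^*$ also has its domain and codomain the wrong way round.

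The correct lurking isometry, which is exactly the paper's, runs from $\LS_1\oplus\HS$ to $\LS_3\oplus\HS$:
\[
V\bigl(f_{12}(w)^*y\oplus X(w)^*\Gamma(w)^*y\bigr)=f_{32}(w)^*y\oplus \Gamma(w)^*y \qquad (w\in\Hex,\ y\in\LS_2).
\]
Add an infinite-dimensional summand to $\HS_3$, on which $X$ acts as $z^{(3)}I$ and $\Gamma(w)^*$ has no component. Then the two defect spaces have the same dimension, so $V$ extends to a unitary $\begin{bmatrix}A&B\\C&D\end{bmatrix}$ and no contractive colligation is needed. The second row gives $\Gamma(w)^*=(I-DX(w)^*)^{-1}Cf_{12}(w)^*$, and the first gives $f_{32}(w)^*=\bigl(A+BX(w)^*(I-DX(w)^*)^{-1}C\bigr)f_{12}(w)^*$. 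Taking adjoints only at the end yields a holomorphic function,
\[
f_{31}(z)=A^*+C^*X(z)\bigl(I-D^*X(z)\bigr)^{-1}B^*,
\]
which satisfies $f_{12}f_{31}=f_{32}$. It is the transfer function of the unitary $V^*\colon \LS_3\oplus\HS\to\LS_1\oplus\HS$, so $f_{31}\in SA_\Hex(\LS_3,\LS_1)$ by Theorem \ref{thm_realization_vv_H}.

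For $(1)\Rightarrow(2)$ you take a different route from the paper. The paper models $k$ by the multiplication tuple on the $\mathcal{B}(\LS_2)$-valued reproducing kernel Hilbert space and compresses it to $\overline{\mathrm{span}}\{k_{z_i}h\}$. It then dilates by $s\cdot$ into $\mathfrak{M}_\Hex$ and tests $\widehat f(z)=f(\bar z)^*$ on kernel vectors. This uses only the definition of $SA_\Hex$, plus an approximation through functions holomorphic on $\overline{\Hex}$.

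You instead use the identity $I-f_{31}(z)f_{31}(w)^*=H(z)\bigl(I-X(z)X(w)^*\bigr)H(w)^*$ with $H(z)=B(I-X(z)D)^{-1}$, which holds exactly for a unitary colligation. You approximate the spectral integrals of $\rho_1,\rho_2$ by simple functions and combine admissibility of $k$ with the Schur product theorem $[A_{ij}]\geq 0,\ [B_{ij}]\geq 0\Rightarrow[A_{ij}\otimes B_{ij}]\geq 0$. You then conjugate by $f_{12}(z)\otimes I$. This argument is correct. The coefficient space of $k$ plays no role in it, so the $\LS_1$ versus $\LS_2$ mismatch you worry about disappears. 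Its cost is that it rests on the deep direction $(1)\Rightarrow(4)$ of Theorem \ref{thm_realization_vv_H}.
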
	

\begin{proof} The implication $(2)\implies(3)$ follows directly from Theorem \ref{thm_sa_vv_H} applied to the self-adjoint function $(z,w)\mapsto f_{12}(z)f_{12}(w)^*-f_{32}(z)f_{32}(w)^*$. We now prove $(1) \implies (2)$ and $(3) \implies (1)$.

\medskip 

\noindent $(1) \implies (2)$. Let $F=\{z_1, \dotsc, z_n\}$ be a subset of $\Hex$ and let $k$ be a $\mathcal{B}(\LS_2)$-valued admissible kernel on $\Hex$. For $w \in \Hex$ and $h \in \LS_2$, we define $k_wh: \Hex \to \LS_2$ as $k_wh(z)=k(z, w)h$. Let 
	\[
	\HS(k)=\overline{\text{span}}\left\{k_{z}h: z \in \Hex, h \in \LS_2\right\} \ \ \text{and}\  \ \HS_n(k)=\overline{\text{span}}\{k_{z_i}h: h \in \LS_2, 1 \leq i \leq n\}.
	\]
	Clearly, $\HS(k)$ is the reproducing kernel Hilbert space associated with the operator-valued kernel $k$, and is equipped with the inner product $\la k_zh, k_w h'\ra_{\HS(k)}=\la k(w, z)h, h'\ra_{\LS_2}$. Consider the commuting quadruple $\underline{M}=(M_{z^{(1)}}, M_{z^{(2)}}, M_{z^{(3)}}, M_{z^{(4)}})$ consisting of the coordinate multiplication operators on $\HS(k)$. Note that  $M_{z^{(i)}}^*(k_zh)=k_z(\overline{z}^{(i)}h)$ for $ 1 \leq i \leq 4$. For every $\al \in \DC$ and $(\al_1, \al_2) \in \DC^2$.
	\begin{align}\label{eqn_201}
		\|M_{z^{(3)}}\| \leq 1, \ \ \|\Psi(\al, M_{z^{(2)}}, M_{z^{(3)}}, M_{z^{(4)}})\| \leq 1 \ \ \text{and} \ \ \|\psi_{\al_1, \al_2}(M_{z^{(1)}}, M_{z^{(2)}}, M_{z^{(3)}}, M_{z^{(4)}})\| \leq 1
	\end{align}
	 Also, $\HS_n(k)$ is a joint invariant subspace for $\underline{M}^*=(M_{z^{(1)}}^*, M_{z^{(2)}}^*, M_{z^{(3)}}^*,  M_{z^{(4)}}^*)$ since $M_{z^{(i)}}^*(k_{z_j}h)=k_{z_j}(\overline{z}_j^{(i)}h)$ for $1 \leq i \leq 4$ and $1 \leq j \leq n$. Consider  $\underline{T}=(T_1, T_2, T_3, T_4)$ on $\HS_n(k)$ given by
	\[
	(T_1^*, T_2^*, T_3^*, T_4^*)=\left(M_{z^{(1)}}^*|_{\HS_n(k)}, M_{z^{(2)}}^*|_{\HS_n(k)}, M_{z^{(3)}}^*|_{\HS_n(k)}, M_{z^{(4)}}^*|_{\HS_n(k)}\right).
	\]
	Since $\Hex$ is a $(1, 1, 1, 2)$-quasi-balanced domain, it follows from \eqref{eqn_201} that the quadruple $s\cdot\underline{T}^*=(sT_1^*, sT_2^*, sT_3^*, s^2T_4^*) \in \mathfrak{M}_\Hex$ for every $s \in (0, 1)$. Take a holomorphic map $f: \overline{\Hex} \to \mathcal{B}(\LS_3, \LS_1)$ such that $f \in SA_\Hex(\LS_3, \LS_1)$ and define $\widehat{f}: \overline{\Hex} \to \mathcal{B}(\LS_1, \LS_3)$ as $\widehat{f}(z)=f(\overline{z})^*$ for $z \in \overline{\Hex}$. Then $\widehat{f} \in SA_\Hex(\LS_1, \LS_3)$. Evidently, $\widehat{f}(s\cdot \underline{T}^*) \in \mathcal{B}(\HS_n(k)\otimes \LS_1, \HS_n(k)\otimes \LS_3)$ that satisfies
	\begin{align}\label{eqn_202}
		\widehat{f}(s\cdot \underline{T}^*)(k_{z_j}u\otimes v)=k_{z_j}u\otimes f(s\cdot z_j)^*v
	\end{align} 
	for every $u \in \LS_2, v \in \LS_1$ and $1 \leq j \leq n$. To see this, note that
	\begin{align*}
		\la \widehat{f}(s\cdot \underline{T}^*)(k_{z_j}u_j\otimes v_j), k_{z_i}u_i\otimes v_i \ra 	
		&=\la \widehat{f}_{v_j, v_i}(s\cdot \underline{T}^*)k_{z_j}u_j, k_{z_i}u_i \ra \\
		&=\la k_{z_j}(\widehat{f}_{v_j, v_i}(s\cdot \overline{z}_j)u_j), k_{z_i}u_i\ra \quad [M_{z^{(i)}}^*(k_{z_j}h)=k_{z_j}(\overline{z}_j^{(i)}h) \ \text{for} \ 1 \leq i \leq 4]\\
		&=  \la k(z_i, z_j)(\widehat{f}_{v_j, v_i}(s\cdot \overline{z}_j)u_j), u_i\ra \\
		&=\widehat{f}_{v_j, v_i}(s\cdot \overline{z}_j) \la k(z_i, z_j)u_j, u_i\ra \\
		&=\la \widehat{f}(s\cdot \overline{z}_j)v_j, v_i \ra \la k(z_i, z_j)u_j, u_i\ra\\
		&=\la f(s\cdot z_j)^*v_j, v_i \ra \la k(z_i, z_j)u_j, u_i\ra\\
		&= \la k_{z_j}u_j \otimes f(s\cdot \overline{z}_j)^*v_j, k_{z_i}u_i \otimes v_i \ra
	\end{align*}
	for every $u_i, u_j \in \LS_2, v_i \in \LS_3$ and $v_j \in \LS_1$ with $ 1\leq i, j \leq n$.
	 Let $\{u_i, v_i : 1 \leq i \leq n\} \subset \LS_2$. Since $\widehat{f} \in SA_\Hex(\LS_1, \LS_3)$ and $s\cdot \underline{T}^* \in \mathfrak{M}_\Hex$, it follows that $\|\widehat{f}(s\cdot \underline{T}^*)\| \leq 1$. Then
	\begin{align*}
		0 & \leq \left\la \left(I_{\HS_n(k)\otimes \HS_1}-\widehat{f}(s\cdot \underline{T}^*)^*\widehat{f}(s\cdot \underline{T}^*)\right) \left[\overset{n}{\underset{j=1}{\sum}}k_{z_j}u_j\otimes f_{12}(z_j)^*v_j  \right], \left[\overset{n}{\underset{i=1}{\sum}}k_{z_i}u_i\otimes f_{12}(z_i)^*v_i  \right]\right\ra \\
		&= \overset{n}{\underset{i, j=1}{\sum}}\left \la k_{z_j}u_j\otimes f_{12}(z_j)^*v_j, k_{z_i}u_i\otimes f_{12}(z_i)^*v_i \right \ra \\
		& \quad -
		\overset{n}{\underset{i, j=1}{\sum}}\left \la \widehat{f}(s\cdot \underline{T}^*)\left[k_{z_j}u_j\otimes f_{12}(z_j)^*v_j\right], \widehat{f}(s\cdot \underline{T}^*)\left[k_{z_i}u_i\otimes f_{12}(z_i)^*v_i\right] \right \ra \\
		&= \overset{n}{\underset{i, j=1}{\sum}} \left \la k(z_i, z_j)u_j, u_i \right \ra
		\left \la \left(f_{12}(z_i)f_{12}(z_j)^*-f_{12}(z_i)f(s\cdot z_i) f(s\cdot z_j)^*f_{12}(z_j)^*\right)v_j,v_i \right \ra
	\end{align*}
	and taking the limit $s \to 1^-$, it follows that
	\begin{align}\label{eqn_203}
		\overset{n}{\underset{i, j=1}{\sum}} \left \la k(z_i, z_j)u_j, u_i \right \ra
		\left \la \left(f_{12}(z_i)f_{12}(z_j)^*-f_{12}(z_i)f(z_i) f(z_j)^*f_{12}(z_j)^*\right)v_j,v_i \right \ra \geq 0.
	\end{align}
	For $f_{31}\in SA_{\Hex}(\LS_3,\LS_1)$, apply the above argument to the functions
	$f_{31}^{(r)}(z)=f_{31}(r\cdot z)$, where  $0<r<1$. Letting $r\to1^-$ gives that
		\begin{align*}
		\overset{n}{\underset{i, j=1}{\sum}} \left \la k(z_i, z_j)u_j, u_i \right \ra
		\left \la \left(f_{12}(z_i)f_{12}(z_j)^*-f_{12}(z_i)f_{31}(z_i) f_{31}(z_j)^*f_{12}(z_j)^*\right)v_j,v_i \right \ra \geq 0.
	\end{align*}

	\medskip

	\noindent	$(3) \implies (1)$. Assume that there exist a weak $\mathcal{B}(\LS_2)$-valued kernel $\delta$ on $\Hex$ and completely positive kernels $\xi: \Hex \times \Hex \to \mathcal{B}(C(\DC^2), \mathcal{B}(\LS_2)), \nabla: \Hex \times \Hex \to \mathcal{B}(C(\DC), \mathcal{B}(\LS_2))$ such that for all $z, w \in \Hex$,
	\begin{align}\label{eqn_2004}
		&f_{12}(z)f_{12}(w)^*-f_{32}(z)f_{32}(w)^* \notag \\
		&=\xi(z, w)(1-E(z)\overline{E(w)})+\nabla(z, w)(1-e(z)\overline{e(w)})+(1-z^{(3)}\overline{w}^{(3)})\delta(z, w).
	\end{align}
	By Proposition \ref{prop_prelim_I_vv_H}, there exist Hilbert space $\KS_1, \KS_2$ and maps $L_1: \Hex \to \mathcal{B}(C(\DC^2), \mathcal{B}(\KS_1, \LS_2))$ and $L_2: \Hex \to \mathcal{B}(C(\DC), \mathcal{B}(\KS_2, \LS_2))$ such that for every $z, w \in \Hex$, $f_1, h_1 \in C(\DC^2)$ and $f_2, h_2 \in C(\DC)$,
	\begin{align}\label{eqn_2005}
		\xi(z, w)(f_1\overline{h}_1)=L_1(z)(f_1)(L_1(w)(h_1))^* \quad  \text{and} \quad \nabla(z, w)(f_2\overline{h}_2)=L_2(z)(f_2)(L_2(w)(h_2))^*.
	\end{align} 
	We also have unital $*$-representations $\rho_1: C(\DC^2)\to \mathcal{B}(\KS_1)$ and $\rho_2: C(\DC)\to \mathcal{B}(\KS_2)$ that satisfy 
	\begin{align}
		(L_1(z)(f_1h_1))^*=\rho_1(f_1)^*(L_1(z)(h_1))^* \quad \text{and} \quad (L_2(z)(f_2h_2))^*=\rho_2(f_2)^*(L_2(z)(h_2))^*
	\end{align}
for every $z, w \in \Hex$, $f_1, h_1 \in C(\DC^2)$ and $f_2, h_2 \in C(\DC)$. By Theorem 2.62 in \cite{Agler_McCarthy}, there exist a Hilbert space $\KS_3$ and a map $g: \Hex \to \mathcal{B}(\KS_3, \LS_2)$ such that $\delta(z, w)=g(z)g(w)^*$. Set $\KS=\KS_1 \oplus \KS_2 \oplus \KS_3$. Using \eqref{eqn_2005}, we can re-write \eqref{eqn_2004} as 
	\begin{align*}
		&f_{12}(z)f_{12}(w)^*+L_1(z)(E(z))(L_1(w)(E(w)))^*+L_2(z)(e(z))(L_2(w)(e(w)))^*+z^{(3)}\overline{w}^{(3)}g(z)g(w)^*
		\\
		&=
		f_{32}(z)f_{32}(w)^*+L_1(z)(1)(L_1(w)(1))^*+L_2(z)(1)(L_2(w)(1))^*+g(z)g(w)^*
	\end{align*}
	for every $z, w \in \Hex$. Consider the spaces given by
	{\small 
		\setlength{\arraycolsep}{3pt}
		\begin{align*}
			\LS_d
			= \overline{\text{span}} \left\{
			\begin{bmatrix}
				f_{12}(z)^*v \\
				Y(z)^*
				\begin{bmatrix}
					(L_1(z)(1))^*v \\
					(L_2(z)(1))^*v \\
					g(z)^*v
				\end{bmatrix}
			\end{bmatrix}
			: z \in \Hex, v \in \LS_2
			\right\} \ \text{and} \ \LS_r
			= \overline{\text{span}} \left\{
			\begin{bmatrix}
				f_{32}(z)^*v \\
				(L_1(z)(1))^*v \\
				(L_2(z)(1))^*v \\
				g(z)^*v
			\end{bmatrix}
			: z \in \Hex, v \in \LS_2
			\right\},
		\end{align*}
	}
where  $Y(z)=\begin{bmatrix} \rho_1(E(z)) & 0 & 0\\ 
	0 & \rho_2(e(z)) & 0\\
	0 & 0 & z^{(3)}I_{\LS_3}
\end{bmatrix}$.	Evidently, $\LS_d$ and $\LS_r$ are subspaces of $\LS_1 \oplus \KS$ and $\LS_3 \oplus \KS$, respectively. Furthermore, the linear operator $V: \LS_d \to \LS_r$ defined by
	\[
	V\begin{bmatrix}
		f_{12}(z)^*v \\
		Y(z)^*
		\begin{bmatrix}
			(L_1(z)(1))^*v \\
			(L_2(z)(1))^*v \\
			g(z)^*v
		\end{bmatrix}
	\end{bmatrix}
	=\begin{bmatrix}
		f_{32}(z)^*v \\
		(L_1(z)(1))^*v \\
		(L_2(z)(1))^*v \\
		g(z)^*v
	\end{bmatrix}
	\]
	is an isometry. Consequently, $V$ extends to a unitary from $\LS_1 \oplus \KS$ onto $\LS_3 \oplus \KS$. If we write $V=\begin{bmatrix} A_1 & B_1 \\ C_1 & D_1 \end{bmatrix}$, then $f_{32}(z)^*v=A_1f_{12}(z)^*v+B_1Y(z)^*(I-D_1Y(z)^*)^{-1}C_1f_{12}(z)^*v$ for $z \in \Hex$ and $v \in \LS_2$. So, we define  $f_{31}: \Hex \to \mathcal{B}(\LS_3, \LS_1)$ as 
	$f_{31}(z)^*=A_1+B_1Y(z)^*(I-D_1Y(z)^*)^{-1}C_1$. Clearly, $f_{12}(z)f_{31}(z)=f_{32}(z)$ for every $z \in \Hex$ and  
	\[
	f_{31}(z)=A_1^*+C_1^*Y(z)(I-D_1^*Y(z))^{-1}B_1^*=A+BY(z)(I-DY(z))^{-1}C.
	\]
It follows from Theorem \ref{thm_realization_vv_H} that $f_{31} \in SA_{\Hex}(\LS_3, \LS_1)$. The proof is now complete. 
\end{proof}

Finally, we present the Toeplitz corona theorem on $\Hex$ and conclude the section. 

\begin{thm}\label{thm_TC_H_II}
	For $\varphi_1, \dotsc, \varphi_d \in H^\infty(\Hex)$ and $\epsilon>0$, the following statements are equivalent:
	\begin{enumerate}[leftmargin=*]
		\item[$(1)$] there exist $f_1, \dotsc, f_d \in H^\infty(\Hex)$ such that  
		$
		\begin{bmatrix}
			f_1 & \dotsc & f_d
		\end{bmatrix}^t \in \frac{1}{\epsilon}SA_\Hex(\C, \C^d)
		$ 
		and $\overset{d}{\underset{j=1}{\sum}}\varphi_jf_j=1$;
		
		\item[$(2)$]
		$
		(z, w) \mapsto \left(\overset{d}{\underset{j=1}{\sum}} \varphi_j(z)\overline{\varphi_j(w)}-\epsilon^2\right)k(z, w)
		$ on $\Hex \times \Hex$	is positive semi-definite for every $k \in AK(\Hex)$;
		
		\item[$(3)$] there exist $\xi \in C(\overline{\D}^2)^+_\Hex, \nabla \in C(\DC)^+_\Hex$ and $\delta \in \C_\Hex^+$ such that for all $z, w \in \Hex$,
		\[
		\overset{d}{\underset{j=1}{\sum}} \varphi_j(z)\overline{\varphi_j(w)}-\epsilon^2=\xi(z, w)(1-E(z)\overline{E(w)})+\nabla(z, w)(1-e(z)\overline{e(w)})+(1-z^{(3)}\overline{w}^{(3)})\delta(z, w),
		\]
		where $E(z)$ and $e(z)$ are as in \eqref{eqn_E(z)}.
	\end{enumerate}
\end{thm}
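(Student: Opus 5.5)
The plan is to deduce Theorem \ref{thm_TC_H_II} from the vector-valued Theorem \ref{thm_TC_H} by a specialization of the Hilbert spaces. Take $\LS_1=\C^d$, $\LS_2=\C$ and $\LS_3=\C$. Set $f_{12}(z)=\begin{bmatrix}\varphi_1(z) & \dotsc & \varphi_d(z)\end{bmatrix}\in\mathcal{B}(\C^d,\C)$ and $f_{32}(z)=\epsilon\in\mathcal{B}(\C,\C)$. With these choices,
\[
f_{12}(z)f_{12}(w)^*-f_{32}(z)f_{32}(w)^*=\overset{d}{\underset{j=1}{\sum}}\varphi_j(z)\overline{\varphi_j(w)}-\epsilon^2 .
\]
When $\LS_2=\C$, a $\mathcal{B}(\LS_2)$-valued admissible kernel is exactly an element of $AK(\Hex)$, up to the harmless distinction between kernels and weak kernels. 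Likewise, completely positive kernels into $\mathcal{B}(C(K),\C)$ are exactly positive kernels in $C(K)^+_\Hex$, and a weak $\mathcal{B}(\C)$-valued kernel is an element of $\C^+_\Hex$. Consequently conditions $(2)$ and $(3)$ of Theorem \ref{thm_TC_H} become verbatim conditions $(2)$ and $(3)$ of Theorem \ref{thm_TC_H_II}, and their equivalence is inherited directly.

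It remains to match condition $(1)$. Suppose $f_{31}\in SA_\Hex(\C,\C^d)$ satisfies $f_{12}f_{31}=\epsilon$. Write $f_{31}=\epsilon\,[f_1,\dotsc,f_d]^t$, so that $f_j=\epsilon^{-1}(f_{31})_j$. Then $\sum_j\varphi_jf_j=1$ and $[f_1,\dotsc,f_d]^t\in\frac1\epsilon SA_\Hex(\C,\C^d)$, which is condition $(1)$ of Theorem \ref{thm_TC_H_II}. Conversely, given $f_1,\dotsc,f_d$ as in $(1)$, we put $f_{31}=\epsilon[f_1,\dotsc,f_d]^t$ and obtain condition $(1)$ of Theorem \ref{thm_TC_H}.

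Two further points need checking.
\begin{enumerate}
\item Each $f_j$ lies in $H^\infty(\Hex)$. Each scalar point $(z^{(1)}I,\dotsc,z^{(4)}I)$ with $z\in\Hex$ belongs to $\mathfrak{M}_\Hex$. Hence $\|f_{31}(z)\|\le 1$ for all $z\in\Hex$, so each $f_j$ is bounded by $1/\epsilon$. Holomorphy comes with the Schur–Agler class.
\item The direction $(3)\Rightarrow(1)$ of Theorem \ref{thm_TC_H}, when specialized, produces a function realized by a unitary colligation. Theorem \ref{thm_realization_vv_H} places this function in $SA_\Hex(\C,\C^d)$, so it is holomorphic as required.
\end{enumerate}

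I expect the only real obstacle to be bookkeeping rather than mathematics. The identification of the scalar and vector notions of kernels must be exact. In particular, the positive semi-definiteness in $(2)$ of Theorem \ref{thm_TC_H}, which is phrased with a tensor product $\otimes k(z_i,z_j)$, must reduce to the Schur-product positivity in $(2)$ of Theorem \ref{thm_TC_H_II} when all spaces are $\C$. In addition, the proof of $(1)\Rightarrow(2)$ in Theorem \ref{thm_TC_H} covers the weak kernels appearing in $AK(\Hex)$. Once these identifications are written out, the theorem follows immediately.
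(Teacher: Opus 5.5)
Your proposal is correct and is essentially the paper's own proof: the paper also specializes Theorem \ref{thm_TC_H} with $\LS_1=\C^d$, $\LS_2=\LS_3=\C$, $f_{12}=[\varphi_1,\dotsc,\varphi_d]$, $f_{32}=\epsilon$ and $f_{31}=\epsilon[f_1,\dotsc,f_d]^t$. Your extra checks are details the paper leaves implicit: that each $f_j$ is bounded by $1/\epsilon$ because scalar points of $\Hex$ lie in $\mathfrak{M}_\Hex$, and that the $\mathcal{B}(\C)$-valued kernel notions coincide with $AK(\Hex)$, $C(K)^+_\Hex$ and $\C^+_\Hex$.
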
	

\begin{proof}
	The proof follows from Theorem \ref{thm_TC_H} by choosing the spaces $\LS_1=\C^d, \LS_2=\LS_3=\C$, and defining $f_{32}(z)=\epsilon, f_{12}(z)=\begin{bmatrix}
		\varphi_1(z) & \dotsc & \varphi_d(z)
	\end{bmatrix}$ and $f_{31}(z)=\epsilon \begin{bmatrix}
		f_1(z) & \dotsc & f_d(z)
	\end{bmatrix}^t$ for $z \in \Hex$.
\end{proof}

	\section{Realization, interpolation, extension and Toeplitz corona theorems on $\B_2$}\label{sec_biball}

	\noindent For $z=(z_1, \dotsc, z_d)$ and $w=(w_1, \dotsc, w_d)$ in $\C^d$, set $\langle z, w \rangle=z_1\overline{w}_1+\dotsc+z_d\overline{w}_d$ and  
	$
	k_d(z, w)=1\slash (1-\langle z, w\rangle).
	$ 
	The Drury-Arveson space is the Hilbert space of analytic functions on $\B_d$ with reproducing kernel $k_d(z, w)$, e.g., see \cite{Arveson, Drury}. An analytic function $\varphi(z)$ on $\B_d$ is a multiplier of the Drury-Arveson space if the multiplication by $\varphi(z)$ is a bounded operator on the Drury-Arveson space. A multiplier is said to be contractive if the corresponding multiplication operator is a contraction. The class of contractive multipliers of the Drury-Arveson space is called the Schur-Agler class on the unit ball, and is a proper subset of the Schur class $S(\B_d)$ when $d>1$. An interested reader is referred to \cite{Agler_McCarthy, Ball_Fang, Esch_Put} for further details on the Schur-Agler class on $\B_d$ and the associated realization theorem. A version of Nevanlinna-Pick type interpolation in the setting of Drury-Arveson space to a large class of vector-valued reproducing kernel Hilbert spaces over $\B_d$ is studied in \cite{Deepak_Jaydeb_II} (also see \cite{Deepak_Jaydeb, Kosinski_II}, and the references therein). In this section, we study a different subclass of Schur class $S(\B_2)$, and provide suitable versions of realization, interpolation and extension theorems on $\B_2$. In this direction, we recall the following result from \cite{Biswas}.
	
	\begin{thm}[\cite{Biswas}, Theorem 10.14]\label{thm_connect_H_B2}
		A point $(z^{(1)}, z^{(2)}) \in \mathbb{B}_2$ if and only if $(z^{(1)}, z^{(2)}, 0, 0) \in \Hex$.
	\end{thm}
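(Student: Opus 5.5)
The plan is to use the characterization \eqref{eqn_H} of $\Hex$ with $z^{(3)}=z^{(4)}=0$ and compute the supremum explicitly. For $z=(z^{(1)},z^{(2)},0,0)$ the first condition $|z^{(3)}|<1$ is automatic. The second condition reads $|\Psi(\al,z^{(2)},0,0)|=|z^{(2)}|<1$ for all $\al\in\DC$, i.e. $(z^{(2)},0,0)\in\E$ if and only if $|z^{(2)}|<1$. The third condition becomes
\[
\sup_{\al_1,\al_2\in\DC}\frac{|z^{(1)}|\sqrt{(1-|\al_1|^2)(1-|\al_2|^2)}}{|1-z^{(2)}\al_1|}<1 .
\]
The $\al_2$-dependence factors out, and the supremum over $\al_2$ is attained at $\al_2=0$. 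So everything reduces to computing
\[
M(z^{(2)})=\sup_{\al_1\in\D}\frac{\sqrt{1-|\al_1|^2}}{|1-z^{(2)}\al_1|}
\]
for $|z^{(2)}|<1$. The claim to establish is $M(z^{(2)})=(1-|z^{(2)}|^2)^{-1/2}$.

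To compute $M$, write $c=z^{(2)}$. By rotating $\al_1$ we may take $c\al_1=|c|t$ with $t=|\al_1|\in[0,1)$, which makes the denominator as small as possible for given modulus. The problem then reduces to maximizing
\[
h(t)=\frac{1-t^2}{(1-|c|t)^2}.
\]
Differentiating, the critical point is $t=|c|$, where $h(|c|)=1/(1-|c|^2)$. Alternatively, the identity
\[
(1-|c|^2)(1-t^2)\le(1-|c|t)^2
\]
is equivalent to $(t-|c|)^2\ge0$, which gives the bound and its equality case at once. Hence the third condition is equivalent to $|z^{(1)}|^2<1-|z^{(2)}|^2$, i.e. $|z^{(1)}|^2+|z^{(2)}|^2<1$.

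Combining the three conditions proves both directions. If $(z^{(1)},z^{(2)})\in\B_2$, then $|z^{(2)}|<1$ and the strict inequality holds, so $(z^{(1)},z^{(2)},0,0)\in\Hex$. Conversely, membership in $\Hex$ forces $|z^{(2)}|<1$ through the tetrablock condition, and then the sup computation yields $(z^{(1)},z^{(2)})\in\B_2$.

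The only point needing care is that the strict inequality survives the supremum. The supremum is attained at $\al_1=|c|\bar c/|c|=\bar c$ (any $\al_1$ with $|\al_1|=0$ if $c=0$), which lies in $\D$, and at $\al_2=0$. So "$\sup<1$" is equivalent to the strict ball inequality, with no boundary issue. This attainment step is the main thing to check; the rest is routine.
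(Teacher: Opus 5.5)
Your proof is correct. Note that the paper gives no argument of its own for this statement: it imports it from \cite{Biswas} (Theorem 10.14). So your computation is a self-contained verification rather than a variant of an in-paper proof.

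All the steps check out:
\begin{itemize}
\item $\Psi(\al,z^{(2)},0,0)=z^{(2)}$.
\item The $\al_2$-factor $\sqrt{1-|\al_2|^2}$ is maximized at $\al_2=0$.
\item The triangle inequality $|1-z^{(2)}\al_1|\geq 1-|z^{(2)}|\,|\al_1|$, combined with $(1-|c|^2)(1-t^2)\leq(1-|c|t)^2\iff(t-|c|)^2\geq 0$, gives the supremum $|z^{(1)}|(1-|z^{(2)}|^2)^{-1/2}$.
\item The supremum is attained at the interior point $(\overline{z^{(2)}},0)\in\D^2$. This reconciles the defining condition of $\Hex$ (supremum over $\D^2$ strictly below $1$) with the pointwise strict inequalities in \eqref{eqn_H}.
\end{itemize}

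Two small remarks.

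First, you do not need \eqref{eqn_H} or the characterization \eqref{eqn_E}. Directly from the definition of $\E$, $(z^{(2)},0,0)\in\E$ if and only if $1-z^{(2)}\al_1\neq0$ for all $\al_1\in\DC$, which holds if and only if $|z^{(2)}|<1$.

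Second, $\psi_{\al_1,0}(z)$ does not involve $z^{(3)}$ or $z^{(4)}$. So the same evaluation at $(\overline{z^{(2)}},0)$, together with $|z^{(2)}|<1$ from $\E$-membership, gives $|z^{(1)}|^2+|z^{(2)}|^2<1$ for every $z\in\Hex$. This is the projection property $\theta_{\Hex\to\B_2}(\Hex)\subseteq\B_2$, which the paper also borrows from \cite{Biswas} (Lemma 6.5) in Example \ref{eg_002}. It is the scalar counterpart of the inequality $\|\psi_{\al_1,\al_2}(T_1,T_2,0,0)\|\leq\|\psi_{\al_1,0}(T_1,T_2,T_3,T_4)\|$ used in the proof of Theorem \ref{thm_realization_H_B2}. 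Your elementary route therefore recovers both facts the paper cites, at the cost of a one-line optimization.
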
 
	
	The above theorem shows that $\mathbb{B}_2$ can be embedded into $\mathbb{H}$, which allows us to establish the realization, interpolation and extension results on $\mathbb{B}_2$ through the corresponding results on $\Hex$. This shows that the function theory on $\Hex$ naturally provides a single framework from which the aforementioned theorems on $\B_2$ can be derived. Motivated by Theorem \ref{thm_connect_H_B2}, we consider  
	\begin{align*}
		\mathfrak{M}_{\B_2}&=\{(T_1, T_2): (T_1, T_2, 0, 0) \in \mathfrak{M}_\Hex\}.
	\end{align*}
	Evidently, $(z^{1)}I, z^{(2)}I) \in \mathfrak{M}_{\B_2}$ for all $(z^{1)}, z^{(2)}) \in \B_2$. Consequently, the class given by 
	\[
	\mathfrak{C}(\B_2)=\{g \in \text{Hol}(\B_2): \|g(T_1, T_2)\|\leq 1 \ \text{for all} \ (T_1, T_2) \in \mathfrak{M}_{\B_2} \}
	\]
	is contained in the Schur class $S(\B_2)$. 	It is easy to see that a holomorphic map $g: \mathbb B_2 \to \C$ induces a holomorphic map $g\circ \theta_{\Hex \to \mathbb B_2}$ on $\Hex$, where $\theta_{\Hex \to \B_2}: \Hex \to \B_2, (z^{(1)}, z^{(2)}, z^{(3)}, z^{(4)}) \mapsto (z^{(1)}, z^{(2)})$. Capitalizing this, we present the following realization theorem for functions in $ \mathfrak{C}(\B_2)$.
	
	\begin{thm}\label{thm_realization_H_B2}
		For a function $g : \B_2 \to \C$, the following are equivalent: 
		\begin{enumerate}[leftmargin=*]
			\item[$(1)$] $g \in \mathfrak{C}(\B_2)$;
			\item[$(2)$] $f=g\circ \theta_{\Hex \to \B_2} \in SA(\Hex)$;
			\item[$(3)$] $(1-f(z)\overline{f(w)})k(z, w) \succcurlyeq 0$ for all $k \in AK(\Hex)$;
			\item[$(4)$] there exist $\xi \in C(\overline{\D}^2)^+_\Hex, \nabla \in C(\DC)^+_\Hex$ and $\delta \in \C_\Hex^+$ such that for all $z, w \in \Hex$,
			\[
			1-f(z)\overline{f(w)}=\xi(z, w)(1-E(z)\overline{E(w)})+\nabla(z, w)(1-e(z)\overline{e(w)})+(1-z^{(3)}\overline{w}^{(3)})\delta(z, w),
			\]
			where $E(z)$ and $e(z)$ are as in \eqref{eqn_E(z)};
			\item[$(5)$] $f \in UC(\Hex)$.
		\end{enumerate}
	\end{thm}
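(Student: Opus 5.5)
The equivalences $(2)\iff(3)\iff(4)\iff(5)$ are exactly Theorem \ref{thm_realization_H} applied to the function $f=g\circ\theta_{\Hex\to\B_2}:\Hex\to\C$. The only thing to check is that $f$ is holomorphic on $\Hex$, so that it is an admissible input there. This holds because $\theta_{\Hex\to\B_2}$ maps $\Hex$ into $\B_2$ by the remark preceding the theorem, namely that $(z^{(1)},z^{(2)},z^{(3)},z^{(4)})\in\Hex$ implies $(z^{(1)},z^{(2)})\in\B_2$. So the real content is the equivalence $(1)\iff(2)$, which I would prove by comparing the operator classes $\mathfrak{M}_{\B_2}$ and $\mathfrak{M}_\Hex$ through the functional calculus.

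For $(2)\implies(1)$, take $(T_1,T_2)\in\mathfrak{M}_{\B_2}$. By definition $\underline{T}=(T_1,T_2,0,0)\in\mathfrak{M}_\Hex$, so $\|f(\underline{T})\|\le 1$. It then suffices to show that $f(\underline{T})=g(T_1,T_2)$. This is the compatibility of the Taylor functional calculus with the holomorphic map $\theta=\theta_{\Hex\to\B_2}$: we have $\theta(\underline{T})=(T_1,T_2)$, and $(g\circ\theta)(\underline{T})=g(\theta(\underline{T}))$ by the composition rule for the holomorphic functional calculus. If I want to avoid quoting that rule, I can argue by approximation instead. The spectrum $\sigma_T(T_1,T_2)$ is compact in $\B_2$, so some closed ball $r\overline{\B}_2$ with $r<1$ contains it, and the Taylor series of $g$ converges uniformly there. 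Polynomials $p_n\to g$ uniformly on a neighbourhood of the spectrum then give $p_n\circ\theta\to f$ uniformly on a neighbourhood of $\sigma_T(\underline{T})=\sigma_T(T_1,T_2)\times\{(0,0)\}$. For polynomials the identity $(p_n\circ\theta)(\underline{T})=p_n(T_1,T_2)$ is trivial, and continuity of the functional calculus passes it to the limit.

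For $(1)\implies(2)$, take $\underline{T}=(T_1,T_2,T_3,T_4)\in\mathfrak{M}_\Hex$. The main obstacle is to show that $(T_1,T_2)\in\mathfrak{M}_{\B_2}$, i.e.\ that $(T_1,T_2,0,0)\in\mathfrak{M}_\Hex$. Once this is done, the same functional calculus identity gives $f(\underline{T})=g(T_1,T_2)$, and hence $\|f(\underline{T})\|\le1$.

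For the inclusion I would check the three defining conditions of $\mathfrak{M}_\Hex$ for $(T_1,T_2,0,0)$.
\begin{itemize}
\item $\|0\|<1$ holds trivially.
\item $\Psi(\al,T_2,0,0)=T_2$, so I need $\|T_2\|<1$. From $\underline{T}\in\mathfrak{M}_\Hex$ we get $\|\Psi(0,T_2,T_3,T_4)\|=\|T_2\|<1$.
\item $\psi_{\al_1,\al_2}(T_1,T_2,0,0)=\sqrt{(1-|\al_1|^2)(1-|\al_2|^2)}\,T_1(I-\al_1T_2)^{-1}$. Taking $\al_2=0$ in the hypothesis gives
\[
\bigl\|\sqrt{1-|\al_1|^2}\,T_1(I-\al_1T_2)^{-1}\bigr\|<1 \quad\text{for all } \al_1\in\DC,
\]
because $\psi_{\al_1,0}(T_1,T_2,T_3,T_4)$ involves only $T_1,T_2$. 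Since $\sqrt{1-|\al_2|^2}\le1$, the bound persists for every $\al_2\in\DC$.
\end{itemize}
So $(T_1,T_2,0,0)\in\mathfrak{M}_\Hex$. This step is where the specific structure of $\Psi$ and $\psi_{\al_1,\al_2}$ enters, via evaluation at $\al=0$ and $\al_2=0$, and I expect it to be the step that needs care. The rest is functional calculus bookkeeping.
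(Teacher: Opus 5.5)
Your proposal is correct and matches the paper's proof. Both get $(2)$--$(5)$ from Theorem \ref{thm_realization_H} and $(2)\implies(1)$ from the definition. Both get $(1)\implies(2)$ by showing $(T_1,T_2,0,0)\in\mathfrak{M}_\Hex$, using $\Psi(0,T_2,T_3,T_4)=T_2$ and the bound $\|\psi_{\al_1,\al_2}(T_1,T_2,0,0)\|\le\|\psi_{\al_1,0}(\underline{T})\|$. Your one addition is an explicit justification of $(g\circ\theta_{\Hex\to\B_2})(\underline{T})=g(T_1,T_2)$ via the Taylor functional calculus, which the paper uses without comment.
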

	
	\begin{proof} The part $(2) \implies (1)$ follows from the definition of $\mathfrak{C}(\B_2)$, and the implications $(2) \implies (3) \implies (4) \implies (5) \implies (2)$ are consequences of Theorem \ref{thm_realization_H}. It remains to show that $(1) \implies (2)$. Let $g \in \mathfrak{C}(\B_2)$ and let $\underline{T}=(T_1, T_2, T_3, T_4) \in \mathfrak{M}_\Hex$. Then $\|T_3\|<1, \|\Psi(\al, T_2, T_3, T_4)\|<1$ and $\|\psi_{\al_1, \al_2}(T_1, T_2, T_3, T_4)\|<1$ for all $\al \in \DC$ and $(\al_1, \al_2) \in \DC^2$. Let $\al, \al_1, \al_2 \in \DC$. Then $\|\Psi(\alpha, T_2, 0, 0)\|=\|T_2\|=\|\Psi(0, T_2, T_3, T_4)\|<1$ and
		\begin{align*}
			\|\psi_{\al_1, \al_2}(T_1, T_2, 0, 0)\|
			=\sqrt{(1-|\al_1|^2)(1-|\al_2|^2)}\|T_1(I-\al_1T_2)^{-1}\| 
			& \leq \sqrt{1-|\al_1|^2} \ \|T_1(I-\al_1T_2)^{-1}\| \\
			&=\|\psi_{\al_1, 0}(T_1, T_2, T_3, T_4)\|\\
			&<1.
		\end{align*}
		Therefore, $(T_1, T_2, 0, 0) \in \mathfrak{M}_\Hex$ and so, $(T_1, T_2) \in \mathfrak{M}_{\B_2}$. Then $\|g\circ \theta_{\Hex \to \B_2} (\underline{T})\|=\|g(T_1, T_2)\| \leq 1$, which shows that $g \circ \theta_{\Hex \to \B_2} \in SA(\Hex)$. 
	\end{proof}
	
	An application of Theorem \ref{thm_realization_H} provides the following interpolation theorem on $\B_2$.
	
	\begin{thm}\label{thm_interpolation_H_B2}
		Let $F=\{z_1, \dotsc, z_n\} \subseteq \B_2$ and let $\lm_1, \dotsc, \lm_n \in \DC$. Then the following are equivalent:
		\begin{enumerate}[leftmargin=*]
			\item[$(1)$] there exists $g \in \mathfrak{C}(\B_2)$ such that $g(z_j)=\lm_j$ for $1 \leq j \leq n$;
			\item[$(2)$] there exists $f \in SA(\Hex)$ such that $f(z_j^{(1)}, z_j^{(2)}, 0, 0)=\lm_j$, where $z_j=(z_j^{(1)}, z_j^{(2)})$ for $1 \leq j \leq n$;
			\item[$(3)$] $\begin{bmatrix} (1-\lm_i\overline{\lm_j})k\left((z_i^{(1)}, z_i^{(2)}, 0, 0), (z_j^{(1)}, z_j^{(2)}, 0, 0)\right) \end{bmatrix}_{i, j=1}^n \geq 0$ for all $k \in AK(\Hex)$;
			\item[$(4)$] there exist $\xi \in C(\overline{\D}^2)^+_F$ and $\nabla \in C(\DC)^+_F$ such that for $w_i=(z_j^{(1)}, z_j^{(2)}, 0, 0)$ and $1\leq i, j\leq  n$,
			\[
			1-\lambda_i\overline{\lambda}_j=\xi(w_i, w_j)(1-E(w_i)\overline{E(w_j)})+\nabla(w_i, w_j)(1-e(w_i)\overline{e(w_j)}).
			\] 	
		\end{enumerate}
		
		In the case when $(1)$ holds, one may choose $f=g\circ \theta_{\Hex \to \B_2}$ in $(2)$.
	\end{thm}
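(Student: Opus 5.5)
The plan is to reduce everything to Theorem \ref{thm_interpolation_H} applied to the lifted points $w_j=(z_j^{(1)}, z_j^{(2)}, 0, 0)$. By Theorem \ref{thm_connect_H_B2} these lie in $\Hex$; they are distinct because the $z_j$ are. I will prove $(1)\Rightarrow(2)\Rightarrow(1)$ directly, and obtain $(2)\Leftrightarrow(3)$ and $(2)\Leftrightarrow(4)$ from the hexablock interpolation theorem on the finite set $\{w_1,\dots,w_n\}$.

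For $(1)\Rightarrow(2)$, take $g\in\mathfrak{C}(\B_2)$ and set $f=g\circ\theta_{\Hex\to\B_2}$. By Theorem \ref{thm_realization_H_B2}, $f\in SA(\Hex)$, and $f(w_j)=g(z_j)=\lm_j$. This also justifies the final remark of the statement.

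For $(2)\Rightarrow(1)$, given $f\in SA(\Hex)$ with $f(w_j)=\lm_j$, define $g(z^{(1)},z^{(2)})=f(z^{(1)},z^{(2)},0,0)$. This is holomorphic on $\B_2$ by Theorem \ref{thm_connect_H_B2}, and it interpolates the data. To see $g\in\mathfrak{C}(\B_2)$, take $(T_1,T_2)\in\mathfrak{M}_{\B_2}$. Then $(T_1,T_2,0,0)\in\mathfrak{M}_\Hex$, and $g(T_1,T_2)=f(T_1,T_2,0,0)$ by the compatibility of the Taylor functional calculus with the holomorphic map $\iota:(z^{(1)},z^{(2)})\mapsto(z^{(1)},z^{(2)},0,0)$. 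Here $\sigma_T(T_1,T_2,0,0)=\sigma_T(T_1,T_2)\times\{(0,0)\}$, and the composition rule $(f\circ\iota)(\underline T)=f(\iota(\underline T))$ applies. Hence $\|g(T_1,T_2)\|\le 1$.

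The equivalences $(2)\Leftrightarrow(3)$ are exactly $(1)\Leftrightarrow(2)$ of Theorem \ref{thm_interpolation_H} for the points $w_j$. Similarly, $(2)\Leftrightarrow(4')$ holds, where $(4')$ is condition $(3)$ of that theorem, with an extra term $(1-w_i^{(3)}\overline{w}_j^{(3)})\delta(w_i,w_j)$.

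The step needing care, and the main obstacle, is removing the $\delta$-term to match $(4)$. Since $w_i^{(3)}=0$, this term is just $\delta(w_i,w_j)$, a positive semidefinite matrix, and it must be absorbed into $\nabla$ or $\xi$. The key observation is that $e(w)(\al)=\Psi(\al,z^{(2)},0,0)=z^{(2)}$ is constant in $\al$. So I will replace $\nabla$ by $\nabla'(w_i,w_j)(h)=\nabla(w_i,w_j)(h)+\delta(w_i,w_j)\,\tau(h)$, where $\tau$ is the functional $\tau(h)=\int_{\DC}h\,d\nu$ for a probability measure $\nu$ (e.g.\ a point mass). This $\nabla'$ is again a positive kernel, being a sum of a positive kernel and the product of a positive semidefinite matrix with a positive functional. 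It contributes $\delta(w_i,w_j)(1-z_i^{(2)}\overline{z}_j^{(2)})$, not $\delta(w_i,w_j)$.

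To fix this, I would instead use the fact that $1/(1-z^{(2)}\overline{w}^{(2)})$ is a positive kernel on the finite set, since $|z_j^{(2)}|<1$. By the Schur product theorem, $\delta'(w_i,w_j)=\delta(w_i,w_j)/(1-z_i^{(2)}\overline{z}_j^{(2)})$ is positive semidefinite. Setting $\nabla'=\nabla+\delta'\cdot\tau$ then gives exactly $\nabla'(w_i,w_j)(1-e(w_i)\overline{e(w_j)})=\nabla(\cdots)(\cdots)+\delta(w_i,w_j)$. This yields $(4)$.

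The converse $(4)\Rightarrow(2)$ follows by taking $\delta=0$ in Theorem \ref{thm_interpolation_H}$(3)$.
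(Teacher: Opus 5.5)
Your proof is correct and follows the same route as the paper. It gets $(1)\Leftrightarrow(2)$ from $\theta_{\Hex\to\B_2}$ together with $(T_1,T_2)\in\mathfrak{M}_{\B_2}\Rightarrow(T_1,T_2,0,0)\in\mathfrak{M}_\Hex$, and it gets the remaining equivalences from Theorem \ref{thm_interpolation_H} at the lifted points $w_j$. The paper's proof just cites that theorem for $(4)$, even though the $\delta$-term there equals $\delta(w_i,w_j)$ rather than vanishing when $w_i^{(3)}=0$; your absorption of it into $\nabla$ (using $e(w_i)\equiv z_i^{(2)}$ and the Schur product with the Szeg\H{o} kernel) supplies this missing step. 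Alternatively, you could absorb it into $\xi$ through a point mass at any $(\al_1,\al_2)$ with $|\al_2|=1$, where $E(w_i)$ vanishes, which avoids the rescaling.
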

	
	\begin{proof}
		The implication $(1) \implies (2)$ follows directly from Theorem \ref{thm_realization_H_B2} by choosing $f=g\circ \theta_{\Hex \to \B_2}$. The equivalence of $(2)$ with $(3)-(4)$ follows from Theorem \ref{thm_interpolation_H}. It is only left to prove $(2) \implies (1)$. Suppose there exists $f \in SA(\Hex)$ such that $f(z_j^{(1)}, z_j^{(2)}, 0, 0)=\lm_j$ for $1 \leq j \leq n$. Define $g: \B_2 \to \C$ as $g(z^{(1)}, z^{(2)})=f(z^{(1)}, z^{(2)}, 0, 0)$. For $(T_1, T_2) \in \mathfrak{M}_{\B_2}$, we have that  $(T_1, T_2, 0, 0) \in \mathfrak{M}_\Hex$ and so, $\|g(T_1, T_2)\|=\|f(T_1, T_2, 0, 0)\| \leq 1$. Thus, $g \in \mathfrak{C}(\B_2)$ and $g(z_j)=\lambda_j$ for $ 1\leq j \leq n$.  
	\end{proof}
	
	Recall that a subset $W$ of $\Hex$ is said to have the extension property in $SA(\Hex)$ if for every non-zero $f \in \mathscr{HE}(W)$, there exists a norm-preserving extension $g \in H^\infty(\Hex)$ of $f$ such that $g \slash \|f\|_{\infty, W} \in SA(\Hex)$. The biball $\mathbb{B}_2$ can be holomorphically embedded in $\Hex$ as $\B_2 \times \{0\} \times \{0\}$, which has the extension property in $SA(\Hex)$ as our next example shows.

	\begin{eg}\label{eg_002}
		Let $W=\B_2 \times \{0\} \times \{0\}$. By Theorem \ref{thm_connect_H_B2}, $W \subseteq \Hex$.  Let $f \in \mathscr{HE}(W)$ be non-zero. It follows from Lemma 6.5 in \cite{Biswas} that $(w^{(1)}, w^{(2)}) \in \B_2$ for every $(w^{(1)}, w^{(2)}, w^{(3)}, w^{(4)}) \in \Hex$. Consider the maps $g: \Hex \to \C$ and $h: \B_2 \to \C$ given by $g(w^{(1)}, w^{(2)}, w^{(3)}, w^{(4)})=f(w^{(1)}, w^{(2)}, 0, 0)$ and $h(w^{(1)}, w^{(2)})=f(w^{(1)}, w^{(2)}, 0, 0)$. It is clear that $g$ and $h$ are holomorphic maps on $\Hex$ and $\B_2$, respectively. Also, $g|_W=f$ and $\|g\|_{\infty, \Hex}=\|f\|_{\infty, W}$. Let $\underline{T}=(T_1, T_2, T_3, T_4) \in \mathfrak{M}_\Hex$ be acting on a Hilbert space $\mathcal{K}$. Then $\|T_3\|<1, \|\Psi(\al, T_2, T_3, T_4)\|<1$ and $\|\psi_{\al_1, \al_2}(T_1, T_2, T_3, T_4)\|<1$ for $\al \in \DC$ and $(\al_1, \al_2) \in \DC^2$. Let $\al, \al_1, \al_2 \in \DC$. Then $\|\Psi(\alpha, T_2, 0, 0)\|=\|T_2\|=\|\Psi(0, T_2, T_3, T_4)\|<1$ and
		\begin{align*}
			\|\psi_{\al_1, \al_2}(T_1, T_2, 0, 0)\|
			\leq \sqrt{1-|\al_1|^2} \ \|T_1(I-\al_1T_2)^{-1}\| 
			=\|\psi_{\al_1, 0}(T_1, T_2, T_3, T_4)\|
			<1.
		\end{align*}
		Thus, $(T_1, T_2, 0, 0) \in \mathfrak{M}_\Hex$ and so, $(T_1, T_2) \in \mathfrak{M}_{\B_2}$. For $r \in (0, 1)$, set $h_r(w^{(1)}, w^{(2)})=h(rw^{(1)}, rw^{(2)})$ for $(w^{(1)}, w^{(2)})\in \overline{\B}_2$. Clearly, $h_r \in \text{Hol}(\overline{\B}_2)$. Then
		\begin{align*}
			\|g(rT_1, rT_2, rT_3, r^2T_4)\|=\|h(rT_1, rT_2)\| =\|h_r(T_1, T_2)\|\leq \|h_r\|_{\infty, \overline{\B}_2} \leq \|h\|_{\infty, \B_2} \leq \|f\|_{\infty, W}
		\end{align*}
		for every $r \in (0, 1)$. Letting $r\to 1$, we have that $\|g(\underline{T})\| \leq \|f\|_{\infty, W}$ and thus, $\frac{1}{\|f\|_{\infty, W}}g \in SA(\Hex)$. Consequently, $W$ has the extension property in $SA(\Hex)$. \qed 
	\end{eg}
	
	Let $V$ be a subset of $\B_2$. A commuting pair $(A, B)$ of operators is said to be \textit{subordinate} to $V$, if $\sigma_T(A, B) \subset V$, and $g(A, B)=0$ whenever $g$ is holomorphic in a neighbourhood of $V$ and $g|_V=0$. If $f$ is a function on $V$ that admits a holomorphic extension in a neighbourhood of $V$ and $(A, B)$ is subordinate to $V$, then define $f(A, B)=g(A, B)$, where $g$ is any holomorphic extension of $f$ in a neighbourhood of $W$. Also, set $Q\B_2=\{(T_1, T_2) : (T_1, T_2, 0, 0) \in Q\Hex\}$.
	
	\begin{thm}\label{thm_ext_H_B2}
		Let $V \subseteq \B_2$ and let $h \in \mathscr{HE}(V)$. Then there is a bounded holomorphic function $\widetilde{h}$ on $\B_2$ such that $\widetilde{h}|_V=h$ and $\|h\|_{\infty, V}=\|\widetilde{h}\|_{\infty, \B_2}$ if and only if $\|h(A, B)\| \leq \|h\|_{\infty, V}$ for every $(A, B) \in Q\B_2$ subordinate to $V$.
	\end{thm}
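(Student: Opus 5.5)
We transfer the statement to the hexablock via the embedding of Theorem \ref{thm_connect_H_B2}. Put $W=V\times\{0\}\times\{0\}\subseteq\Hex$ and define $f$ on $W$ by $f(z^{(1)},z^{(2)},0,0)=h(z^{(1)},z^{(2)})$. If $\widetilde h$ is a holomorphic extension of $h$ to a neighbourhood $U$ of $V$ in $\C^2$, then $\widetilde h\circ\theta_{\Hex\to\B_2}$ is holomorphic on $\theta^{-1}(U)$, which is a neighbourhood of $W$. Hence $f\in\mathscr{HE}(W)$ and $\|f\|_{\infty,W}=\|h\|_{\infty,V}$. We may assume $h\neq0$, since the case $h=0$ is trivial.

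\emph{Necessity.} Suppose $\widetilde h\in H^\infty(\B_2)$ extends $h$ with the same sup norm, and let $(A,B)\in Q\B_2$ be subordinate to $V$. Then $\sigma_T(A,B)\subseteq V\subseteq\B_2$, so $h(A,B)=\widetilde h(A,B)$ by the independence-of-extension argument. Set $\widetilde h_r(w)=\widetilde h(rw)$. Since $(A,B,0,0)\in Q\Hex$, the quadruple $r\cdot(A,B,0,0)=(rA,rB,0,0)$ lies in $\mathfrak M_\Hex$ for $0<r<1$; this is the quasi-balanced scaling used in the necessity lemma for Theorem \ref{thm_ext_H}. We would like $\|\widetilde h(rA,rB)\|\le\|\widetilde h\|_\infty$. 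This is exactly the argument of Example \ref{eg_002}: there, $\|h(T_1,T_2)\|\le\|h\|_{\infty,\B_2}$ is used for $(T_1,T_2)$ with $(T_1,T_2,0,0)\in\mathfrak M_\Hex$, i.e.\ a von Neumann type inequality for $\mathfrak M_{\B_2}$ on $\text{Hol}(\overline{\B}_2)$. I will take it in the same form the paper does, namely $\mathfrak{C}(\B_2)=S(\B_2)$ implicitly, via Example \ref{eg_002}. Letting $r\uparrow1$ with the dominated convergence (Martinelli integral) argument gives $\|h(A,B)\|\le\|h\|_{\infty,V}$.

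\emph{Sufficiency.} Assume $\|h(A,B)\|\le\|h\|_{\infty,V}$ for all $(A,B)\in Q\B_2$ subordinate to $V$. To apply Theorem \ref{thm_ext_H} to $f$ on $W$, take $\underline T=(T_1,T_2,T_3,T_4)\in Q\Hex$ subordinate to $W$. Then $\sigma_T(\underline T)\subseteq W$. The coordinate functions $z^{(3)}$ and $z^{(4)}$ are entire and vanish on $W$, so subordination forces $T_3=T_4=0$. Thus $(T_1,T_2)\in Q\B_2$. We next check that $(T_1,T_2)$ is subordinate to $V$:
\begin{itemize}
\item By the projection property, $\sigma_T(T_1,T_2)\subseteq V$.
\item If $u$ is holomorphic near $V$ and $u|_V=0$, then $u\circ\theta$ is holomorphic near $W$ and vanishes on $W$, so $u(T_1,T_2)=(u\circ\theta)(\underline T)=0$.
\end{itemize}
Therefore $\|f(\underline T)\|=\|h(T_1,T_2)\|\le\|f\|_{\infty,W}$. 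Theorem \ref{thm_ext_H} then produces $g\in H^\infty(\Hex)$ with $g|_W=f$ and $\|g\|_{\infty,\Hex}=\|f\|_{\infty,W}$. Define $\widetilde h(z^{(1)},z^{(2)})=g(z^{(1)},z^{(2)},0,0)$, which is holomorphic on $\B_2$ by Theorem \ref{thm_connect_H_B2}. It extends $h$, and $\|h\|_{\infty,V}\le\|\widetilde h\|_{\infty,\B_2}\le\|g\|_{\infty,\Hex}=\|h\|_{\infty,V}$, which gives equality.

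The main obstacle is the necessity direction. It needs a bound of $\|\widetilde h(A,B)\|$ by the supremum norm for pairs in $Q\B_2$, which is essentially the claim that every Schur function on $\B_2$ is dominated on $\mathfrak M_{\B_2}$. I would first try to derive it from Example \ref{eg_002}. Failing that, I would try to show directly that $(A,B,0,0)\in\mathfrak M_\Hex$ forces $\|\psi_{\al_1,0}\|<1$, i.e.\ $\|\sqrt{1-|\al|^2}A(I-\al B)^{-1}\|<1$ and $\|B\|<1$, and that these conditions make $(A,B)$ a row-type contraction in the sense needed for a spectral-set estimate on $\overline{\B}_2$.
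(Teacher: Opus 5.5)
\textbf{There is a genuine gap in the necessity direction, and it cannot be closed, because that direction of the statement is false as written.}

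Your sufficiency direction is the paper's argument, and it is fine given Theorem~\ref{thm_ext_H}. You pass to $W=V\times\{0\}\times\{0\}$, use subordination to force $T_3=T_4=0$, check that $(T_1,T_2)\in Q\B_2$ is subordinate to $V$, apply Theorem~\ref{thm_ext_H}, and restrict to $\B_2\times\{0\}\times\{0\}$.

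The necessity direction is where things break. The inequality you borrow from Example~\ref{eg_002} is $\|\widetilde h(rA,rB)\|\le\|\widetilde h\|_{\infty,\B_2}$ for pairs in $\mathfrak{M}_{\B_2}$, i.e.\ $\mathfrak{C}(\B_2)=S(\B_2)$. This is false. Following the paper does not help, since its forward direction rests on the same point. It uses Example~\ref{eg_002} to assert that $\B_2\times\{0\}\times\{0\}$ has the extension property in $SA(\Hex)$. That example relies on the unproved estimate $\|h_r(T_1,T_2)\|\le\|h_r\|_{\infty,\overline{\B}_2}$ for $(T_1,T_2)\in\mathfrak{M}_{\B_2}$. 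Your fallback fails too. $Q\B_2$ contains commuting row contractions (shown below), and by Drury's theorem $\overline{\B}_2$ is not a spectral set for those.

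\textbf{Counterexample.} Let $(A,B)=(rM_{z_1},rM_{z_2})$ on the Drury--Arveson space $H^2_2$, with $0<r<1$.
\begin{itemize}
\item We have $AA^*+BB^*\le r^2I$, $\|\Psi(\al,B,0,0)\|=\|B\|=r$ and $\sigma_T(A,B)=r\overline{\B}_2$. Hence $\sigma_T(A,B,0,0)\subseteq\Hex$ by Theorem~\ref{thm_connect_H_B2}.
\item For $\al\in\DC$, put $X=(I-\al B)^{-1}$, which commutes with $A$. From
\[
(I-\al B)(I-\al B)^*-(1-|\al|^2)(I-BB^*)=(\overline{\al}I-B)(\overline{\al}I-B)^*\ge 0
\]
we get $(1-|\al|^2)(AX)(AX)^*\le(1-|\al|^2)X(I-BB^*)X^*\le I$. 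Hence $\|\psi_{\al_1,\al_2}(A,B,0,0)\|\le\sqrt{1-|\al_1|^2}\,\|A(I-\al_1B)^{-1}\|\le 1$.
\item Thus $(A,B)\in Q\B_2$. It is subordinate to $V=\B_2$, because its spectrum is a compact subset of the open set $V$, on which any function vanishing on $V$ is identically zero.
\item Take $h(z)=2z_1z_2$ on $V=\B_2$. Then $\widetilde h=h$ is a norm-preserving extension with $\|h\|_{\infty,\B_2}=1$. Yet $\|h(A,B)\|\ge\|2r^2z_1z_2\|_{H^2_2}=\sqrt2\,r^2>1$ once $r>2^{-1/4}$.
\end{itemize}
So the ``only if'' direction fails. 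Your own scaling step gives $(sA,sB)\in\mathfrak{M}_{\B_2}$ for $s<1$, so the same computation shows $2z_1z_2\in S(\B_2)\setminus\mathfrak{C}(\B_2)$. Hence Example~\ref{eg_002} is also wrong.

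\textbf{What is salvageable.} Your scaling-plus-limit argument does prove the corrected necessity statement, in which the extension is additionally required to satisfy $\widetilde h/\|h\|_{\infty,V}\in\mathfrak{C}(\B_2)$. That is exactly the kind of extension your sufficiency direction produces via Theorem~\ref{thm_ext_H}.
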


	\begin{proof} 
		Set $W=\{(z^{(1)}, z^{(2)}, 0, 0): (z^{(1)}, z^{(2)}) \in V\}$ and $W_0=\{(z^{(1)}, z^{(2)}, 0, 0): (z^{(1)}, z^{(2)}) \in \B_2\}$. We have by Theorem \ref{thm_connect_H_B2} that $W \subseteq W_0 \subseteq \Hex$. Define $f: W \to \C$ as $f(z^{(1)}, z^{(2)}, 0, 0)=h(z^{(1)}, z^{(2)})$. Since $h \in \mathscr{HE}(V)$ extends to a holomorphic map $h_0$ on a neighbourhood $U$ of $V$, $F(z^{(1)},z^{(2)},z^{(3)}, z^{(4)}) = h_0(z^{(1)},z^{(2)})$ is a holomorphic extension of $f$ to $U \times \C^2$ of $W$ such that $F|_W=f$ and so, $f \in \mathscr{HE}(W)$. 
		
		\smallskip 
		
		We start with the proof of the forward implication. Suppose there exists a bounded holomorphic function $\widetilde{h}$ on $\B_2$ such that $\widetilde{h}|_V=h$ and $\|h\|_{\infty, V}=\|\widetilde{h}\|_{\infty, \B_2}$. The map $\widetilde{f}: W_0 \to \C$ given by $\widetilde{f}(z^{(1)}, z^{(2)}, z^{(3)},z^{(4)})=\widetilde{h}(z^{(1)}, z^{(2)})$ has a holomorphic extension on $\Hex$ as $\widetilde{F}(z^{(1)}, z^{(2)}, z^{(3)}, z^{(4)})= \widetilde{h}(z^{(1)}, z^{(2)})$ is holomorphic on $\Hex$ that satisfies $\widetilde{F}|_{W_0}=\widetilde{f}$. Thus, $\widetilde{f} \in \mathscr{HE}(W_0)$. It follows from Example \ref{eg_002} that $W_0$ has the extension property in $SA(\Hex)$. Then 
		\begin{equation}\label{eqn_4003}
			\|\widetilde{f}(\underline{T})\| \leq \|\widetilde{f}\|_{\infty, W_0}\leq \|\widetilde{h}\|_{\infty, \B_2}=\|h\|_{\infty, V}  
		\end{equation}
		for every $\underline{T} \in Q\Hex$ subordinate to $W_0$. Suppose $(T_1, T_2) \in Q\B_2$ is a commuting pair of operators subordinate to $V$. Then $(T_1, T_2, 0, 0) \in Q\Hex$ and $\sigma_T(T_1, T_2, 0, 0)= \sigma_T(T_1, T_2) \times \{0\} \times \{0\}\subseteq V\times \{0\} \times \{0\} = W$. Let $G$ be holomorphic on an open neighbourhood $U \subseteq \mathbb{C}^4$ of $W$ such that $G|_W = 0$. Define $\xi : \mathbb{C}^2 \to \mathbb{C}^4$ as $\xi(z^{(1)}, z^{(2)}) = (z^{(1)}, z^{(2)}, 0, 0)$ and set $g(z^{(1)}, z^{(2)})= G(z^{(1)}, z^{(2)}, 0, 0)$ for $(z^{(1)}, z^{(2)}) \in \xi^{-1}(U)$. Clearly, $g$ is holomorphic in a neighbourhood of $V$. For $(z^{(1)}, z^{(2)}) \in V$, we have that $g(z^{(1)}, z^{(2)}) = G(z^{(1)}, z^{(2)}, 0, 0) = 0$ as $G|_W=0$. Since $(T_1, T_2)$ is subordinate to $V$ and $g|_V=0$, it follows that
		$g(T_1, T_2) = 0$. By holomorphic functional calculus, $G(T_1, T_2, 0, 0) = g(T_1, T_2)= 0$. Hence, $(T_1, T_2, 0, 0)$ is subordinate to $W$ and so, $(T_1, T_2, 0, 0)$ is subordinate to $W_0$ since $W \subseteq W_0$. By \eqref{eqn_4003}, 	for every commuting pair $(T_1, T_2)$ subordinate to $V$, we have
		\[
		\|h(T_1, T_2)\|=\|\widetilde{h}(T_1, T_2)\|=\|\widetilde{F}(T_1, T_2, 0, 0)\|=\|\widetilde{f}(T_1, T_2, 0, 0)\| \leq \|h\|_{\infty, V}
		\]
		Conversely, let $\|h(T_1, T_2)\| \le \|h\|_{\infty,V}$ for every$(T_1, T_2) \in Q\B_2$ subordinate to $V$. Suppose $\underline{T} = (T_1, T_2, T_3, T_4) \in Q\Hex$ is subordinate to $W$. Clearly, $(rT_1, rT_2, rT_3, r^2T_4) \in \mathfrak{M}_\Hex$ for $0<r<1$. Following similar computations as in Example \ref{eg_002}, we have that $(rT_1, rT_2) \in \mathfrak{M}_{\B_2}$ for $0<r<1$. Letting $r \to 1$, it follows that $(T_1, T_2) \in Q\B_2$. Also, $\sigma_T(\underline{T}) \subseteq W$.
		Let $G_j(z^{(1)},z^{(2)},z^{(3)}, z^{(4)}) = z^{(j)}$ for $j=3,4$. Clearly, $G_3, G_4$ are holomorphic on $\C^4$ and $G_3|_W=G_4|_W = 0$. Since $\underline{T}$ is subordinate to $W$, it follows that $T_j= G_j(T_1,T_2,T_3, T_4) = 0$ for $j=3,4$ and so, $\underline{T} = (T_1,T_2, 0, 0)$. We show that $(T_1, T_2)$ is subordinate to $V$. By spectral mapping principle, $\sigma_T(T_1, T_2) \subseteq V$. Let $g_0$ be a holomorphic function in a neighbourhood $U$ of $V$ with $g_0|_V=0$. Then the function $G_0(z^{(1)},z^{(2)},z^{(3)}, z^{(4)})= g_0(z^{(1)},z^{(2)})$ is holomorphic on $U \times \C \times \C$, which is a neighbourhood of $W$ and $G_0|_W=0$. Since $\underline{T}$ is subordinate to $W$, we have that $g_0(T_1,T_2) = G_0(T_1, T_2, 0, 0) = G_0(T_1, T_2, T_3, T_4) = 0$.
		Hence, $(T_1,T_2)$ is subordinate to $V$. By hypothesis,
		$
		\|f(\underline{T})\| = \|f(T_1, T_2, 0, 0)\| = \|h(T_1, T_2)\| \le \|h\|_{\infty,V} = \|f\|_{\infty,W}.
		$
		By Theorem \ref{thm_ext_H}, there exists $\widehat{F} \in H^\infty(\Hex)$ such that $\widehat{F}|_W = f$ and $\|\widehat{F}\|_{\infty,\Hex} = \|f\|_{\infty,W}$. Define $\widehat{h} : \B_2 \to \C$ by $\widehat{h}(z^{(1)},z^{(2)})= \widehat{F}(z^{(1)}, z^{(2)}, 0, 0)$, which is holomorphic on $\B_2$. For $(z^{(1)},z^{(2)}) \in V$, we have that $\widehat{h}(z^{(1)},z^{(2)}) = \widehat{F}(z^{(1)}, z^{(2)}, 0, 0) = f(z^{(1)}, z^{(2)}, 0, 0)= h(z^{(1)},z^{(2)})$ and thus, $\widehat{h}|_V = h$. Finally,
		$\|\widehat{h}\|_{\infty,\B_2} \le \|\widehat{F}\|_{\infty,\Hex} = \|f\|_{\infty,W} = \|h\|_{\infty,V}$ and so, $\|\widehat{h}\|_{\infty,\B_2} = \|h\|_{\infty,V}$.
	\end{proof}		
	
The Toeplitz corona theorem for the Euclidean unit ball
$\B_n=\{z\in\C^n:\|z\|<1\}$ was established in \cite{Amar}. Given
$\varphi_1,\dotsc,\varphi_d\in H^\infty(\B_n)$ and $\epsilon>0$, it
provides necessary and sufficient conditions for the existence of functions
$f_1,\dotsc,f_d\in H^\infty(\B_n)$ such that
\[
\varphi_1f_1+\cdots+\varphi_df_d=1
\quad 
\text{and}
\quad 
\|[f_1,\dotsc,f_d]^t\|_{\infty,\B_n}\le \frac{1}{\epsilon}.
\]
We now present a different version of the Toeplitz corona theorem for $\B_2$ which is associated with a vector-valued analog of the class $\mathfrak{C}(\B_2)$. For Hilbert spaces $\LS_1, \LS_2$, we define 
\[
\mathfrak{C}_{\B_2}(\LS_1, \LS_2)=\{g: \B_2 \to \mathcal{B}(\LS_1, \LS_2): g \ \text{is holomorphic,}  \|g(T_1, T_2)\| \leq 1 \ \text{for all} \ (T_1, T_2) \in \mathfrak{M}_{\B_2} \}.
\]		
Following the arguments as in $(1) \iff (2)$ of Theorem \ref{thm_realization_H_B2} that $g \in \mathfrak{C}_{\B_2}(\LS_1, \LS_2)$ if and only if $g \circ \theta_{\Hex \to \B_2} \in SA_{\Hex}(\LS_1, \LS_2)$. An application of Theorem \ref{thm_TC_H_II} gives a Toeplitz corona type theorem for $\B_2$ as stated below. We leave the proof to the reader.

\begin{thm}\label{thm_TC_B2}
	For $\varphi_1, \dotsc, \varphi_d \in H^\infty(\B_2)$ and $\epsilon>0$, the following statements are equivalent:
	\begin{enumerate}[leftmargin=*]
		\item[$(1)$] there exist $f_1, \dotsc, f_d \in H^\infty(\B_2)$ such that  
		$
		\begin{bmatrix}
			f_1 & \dotsc & f_d
		\end{bmatrix}^t \in \frac{1}{\epsilon}\mathfrak{C}_{\B_2}(\C, \C^d)
		$ 
		and $\overset{d}{\underset{j=1}{\sum}}\varphi_jf_j=1$;
		
		\item[$(2)$] the map
		\[
		(z, w) \mapsto \left(\overset{d}{\underset{j=1}{\sum}} \varphi_j \circ \theta_{\Hex \to \B_2}(z)\overline{\varphi_j\circ \theta_{\Hex \to \B_2}(w)}-\epsilon^2\right)k(z, w)
		\] 
		on $\Hex \times \Hex$ is positive semi-definite for every $k \in AK(\Hex)$;
		
		\item[$(3)$] there exist $\xi \in C(\overline{\D}^2)^+_\Hex, \nabla \in C(\DC)^+_\Hex$ and $\delta \in \C_\Hex^+$ such that for all $z, w \in \Hex$,
		\begin{align*}
			& \overset{d}{\underset{j=1}{\sum}} \varphi_j \circ \theta_{\Hex \to \B_2}(z)\overline{\varphi_j \circ \theta_{\Hex \to \B_2}(w)}-\epsilon^2\\
			& =\xi(z, w)(1-E(z)\overline{E(w)})+\nabla(z, w)(1-e(z)\overline{e(w)})+(1-z^{(3)}\overline{w}^{(3)})\delta(z, w),
		\end{align*}
		where $E(z)$ and $e(z)$ are as in \eqref{eqn_E(z)}.
	\end{enumerate}
\end{thm}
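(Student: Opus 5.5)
The plan is to reduce Theorem \ref{thm_TC_B2} to Theorem \ref{thm_TC_H_II} applied to the pulled-back functions $\Phi_j=\varphi_j\circ\theta_{\Hex\to\B_2}\in H^\infty(\Hex)$, $1\le j\le d$. These are bounded holomorphic on $\Hex$ because $\theta_{\Hex\to\B_2}$ maps $\Hex$ into $\B_2$ (Lemma 6.5 in \cite{Biswas}). Conditions $(2)$ and $(3)$ of Theorem \ref{thm_TC_B2} are, verbatim, conditions $(2)$ and $(3)$ of Theorem \ref{thm_TC_H_II} for $\Phi_1,\dotsc,\Phi_d$. Hence $(2)\iff(3)$ is immediate, and it suffices to show that $(1)$ of Theorem \ref{thm_TC_B2} is equivalent to $(1)$ of Theorem \ref{thm_TC_H_II} for the $\Phi_j$.

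The key tool is the vector-valued form of $(1)\iff(2)$ in Theorem \ref{thm_realization_H_B2}: a holomorphic $G:\B_2\to\mathcal{B}(\C,\C^d)$ lies in $\mathfrak{C}_{\B_2}(\C,\C^d)$ if and only if $G\circ\theta_{\Hex\to\B_2}\in SA_\Hex(\C,\C^d)$. I would check this by repeating that proof with $\|\cdot\|$ taken in $\mathcal{B}(\mathcal K\otimes\C,\mathcal K\otimes\C^d)$.
\begin{itemize}
\item For the backward direction, use $(T_1,T_2)\in\mathfrak{M}_{\B_2}\Rightarrow(T_1,T_2,0,0)\in\mathfrak{M}_\Hex$, together with $(G\circ\theta)(T_1,T_2,0,0)=G(T_1,T_2)$ via the functional calculus (compatibility with projections).
\item For the forward direction, use the computation in the proof of Theorem \ref{thm_realization_H_B2} showing $\underline{T}\in\mathfrak{M}_\Hex\Rightarrow(T_1,T_2)\in\mathfrak{M}_{\B_2}$, and $(G\circ\theta)(\underline{T})=G(T_1,T_2)$.
\end{itemize}

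For $(1)\Rightarrow(1)$, take $f_1,\dotsc,f_d$ on $\B_2$ with $\epsilon[f_1\,\dotsc\,f_d]^t\in\mathfrak{C}_{\B_2}(\C,\C^d)$ and $\sum\varphi_jf_j=1$. Set $F_j=f_j\circ\theta_{\Hex\to\B_2}$. The equivalence above gives $\epsilon[F_1\,\dotsc\,F_d]^t\in SA_\Hex(\C,\C^d)$, and clearly $\sum\Phi_jF_j=1$ on $\Hex$.

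For the converse, let $F_1,\dotsc,F_d\in H^\infty(\Hex)$ satisfy $\epsilon[F_j]^t\in SA_\Hex(\C,\C^d)$ and $\sum\Phi_jF_j=1$. Define $f_j(z^{(1)},z^{(2)})=F_j(z^{(1)},z^{(2)},0,0)$, which is legitimate on $\B_2$ by Theorem \ref{thm_connect_H_B2}. Evaluating the identity at $(z^{(1)},z^{(2)},0,0)$ gives $\sum\varphi_jf_j=1$ on $\B_2$, since $\Phi_j(z^{(1)},z^{(2)},0,0)=\varphi_j(z^{(1)},z^{(2)})$. The norm bound follows exactly as in $(2)\Rightarrow(1)$ of Theorem \ref{thm_interpolation_H_B2}: for $(T_1,T_2)\in\mathfrak{M}_{\B_2}$ we have $(T_1,T_2,0,0)\in\mathfrak{M}_\Hex$, so $\|\epsilon f(T_1,T_2)\|=\|\epsilon F(T_1,T_2,0,0)\|\le1$. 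Boundedness of $f_j$ is inherited from $F_j$.

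The only delicate point is the functional-calculus identity $(G\circ\theta)(T_1,T_2,T_3,T_4)=G(T_1,T_2)$, and its restriction counterpart $F(T_1,T_2,0,0)=f(T_1,T_2)$, in the operator-valued setting. I would handle it componentwise through the scalar functions $g_{x,y}$ from the description in \cite{Esch_Put}, which reduces it to the scalar projection property of the Taylor functional calculus already used in Section \ref{sec_biball}.
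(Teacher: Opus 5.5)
Your proposal is correct and follows the route the paper indicates. The paper leaves the proof to the reader, but its sketch is exactly your plan: the vector-valued analogue of $(1)\iff(2)$ in Theorem \ref{thm_realization_H_B2}, combined with Theorem \ref{thm_TC_H_II} applied to $\varphi_j\circ\theta_{\Hex\to\B_2}$. For the converse you restrict to the slice $\{z^{(3)}=z^{(4)}=0\}$, as in Theorem \ref{thm_interpolation_H_B2}; this correctly supplies the detail the sketch omits, since the functions produced by Theorem \ref{thm_TC_H_II} need not factor through $\theta_{\Hex\to\B_2}$.
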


	\section{Realization, interpolation, extension and Toeplitz corona theorems on $\E$}\label{sec_tetra}
	
	\noindent The authors of \cite{Jain} obtained a realization theorem for functions in the Schur-Agler class $SA(\E)$ for the tetrablock, along with an interpolation theorem on $\E$ in terms of admissible kernels on $\E$, and an extension problem on $\E$. Let us recall from Section \ref{sec_hexa} that $\mathfrak{M}_\E$ is the collection of all commuting triples $(T_2, T_3, T_4)$ such that $\|T_3\|<1$ and $\|\Psi(\alpha, T_2, T_3, T_4)\|<1$ for every $\alpha \in \DC$, where $\Psi(\alpha, z^{(2)}, z^{(3)}, z^{(4)})=(\alpha z^{(4)}-z^{(2)})\slash (\al z^{(3)}-1)$. The Schur-Agler class $SA(\E)$ for $\E$ is defined as
	\[
	SA(\E)=\{f \in \text{Hol}(\E): \|f(T_2, T_3, T_4)\|<1 \ \text{for all} \ (T_2, T_3, T_4) \in \mathfrak{M}_\E \}.
	\]
In this section, we also provide a realization theorem for functions in $SA(\E)$, which is presented in terms of functions belonging to $SA(\Hex)$ and admissible kernels on $\Hex$. Moreover, we obtain interpolation and extension theorems on $\E$ as consequences of the corresponding results on $\Hex$ established in Section \ref{sec_hexa}. To begin with, note that
	$(z^{(1)}, z^{(2)}, z^{(3)}) \in \E$ if and only if $(0, z^{(1)}, z^{(2)}, z^{(3)}) \in \Hex$. Furthermore, if $g \in \text{Hol}(\E)$, then the map $g \circ \theta_{\Hex \to \E}$ is holomorphic on $\Hex$, where 
	\[
	\theta_{\Hex \to \E}: \Hex \to \E, \ (z^{(1)}, z^{(2)}, z^{(3)}, z^{(4)})\mapsto (z^{(2)}, z^{(3)}, z^{(4)}).
	\]
	Equipped with this observation, we have the following version of realization theorem on $\E$.
	
	\begin{thm}\label{thm_realization_H_E}
		For a function $g : \E \to \C$, the following are equivalent: 
		\begin{enumerate}[leftmargin=*]
			\item[$(1)$] $g \in SA(\E)$;
			\item[$(2)$] $f=g\circ \theta_{\Hex \to \E} \in SA(\Hex)$;
			\item[$(3)$] $(1-f(z)\overline{f(w)})k(z, w) \succcurlyeq 0$ for all $k \in AK(\Hex)$;
			\item[$(4)$] there exist $\xi \in C(\overline{\D}^2)^+_\Hex, \nabla \in C(\DC)^+_\Hex$ and $\delta \in \C_\Hex^+$ such that for all $z, w \in \Hex$,
			\[
			1-f(z)\overline{f(w)}=\xi(z, w)(1-E(z)\overline{E(w)})+\nabla(z, w)(1-e(z)\overline{e(w)})+(1-z^{(3)}\overline{w}^{(3)})\delta(z, w),
			\]
			where $E(z)$ and $e(z)$ are as in \eqref{eqn_E(z)};
			\item[$(5)$] $f \in UC(\Hex)$.
		\end{enumerate}
		
	\end{thm}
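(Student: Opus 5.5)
The plan mirrors the proof of Theorem \ref{thm_realization_H_B2}. The implications $(2) \implies (3) \implies (4) \implies (5) \implies (2)$ are exactly Theorem \ref{thm_realization_H} applied to $f=g\circ \theta_{\Hex \to \E}$. So it remains to prove $(1) \iff (2)$. Throughout, the norm conventions for $SA(\E)$ and $SA(\Hex)$ are read as ``$\leq 1$''; with strict inequality the same argument works verbatim.

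For $(1) \implies (2)$, take $g \in SA(\E)$ and $\underline{T}=(T_1, T_2, T_3, T_4) \in \mathfrak{M}_\Hex$. By property (4) of $\mathfrak{M}_\Hex$, $(T_2, T_3, T_4) \in \mathfrak{M}_\E$. The key point is the functional calculus identity
\[
(g\circ \theta_{\Hex \to \E})(\underline{T})=g(T_2, T_3, T_4).
\]
This holds by the composition (projection) property of the Taylor holomorphic functional calculus, using $\sigma_T(T_2,T_3,T_4)=\theta_{\Hex\to\E}(\sigma_T(\underline{T}))\subseteq \E$. Hence $\|f(\underline{T})\|=\|g(T_2,T_3,T_4)\|\leq 1$, so $f \in SA(\Hex)$.

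For $(2) \implies (1)$, take $(T_2, T_3, T_4) \in \mathfrak{M}_\E$ and show $(0, T_2, T_3, T_4) \in \mathfrak{M}_\Hex$. The first two conditions are exactly those defining $\mathfrak{M}_\E$. For the third, $\psi_{\al_1, \al_2}(0, T_2, T_3, T_4)=0$, because $I-\al_1T_2-\al_2T_3+\al_1\al_2T_4$ is invertible (shown earlier for triples in $\mathfrak{M}_\E$) and the prefactor $T_1$ is zero. Then, by the same functional calculus identity,
\[
\|g(T_2, T_3, T_4)\|=\|f(0, T_2, T_3, T_4)\|\leq 1.
\]
Here $g$ is holomorphic on $\E$ since $g(w)=f(0,w)$ and $(0,w)\in\Hex$ for every $w\in\E$. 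Thus $g\in SA(\E)$.

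The only step needing care is justifying the identity $(g\circ\theta)(\underline{T})=g(\theta(\underline{T}))$ for the coordinate projection $\theta=\theta_{\Hex \to \E}$. I would handle it by the standard composition rule for Taylor's calculus with polynomial maps. Alternatively, I would use the $(1,1,1,2)$ quasi-balanced dilation $r\cdot$ together with polynomial approximation: $\overline{\E}$ is polynomially convex, so Oka–Weil gives polynomials $p_n \to g_r$ uniformly, and the identity is obvious for polynomials. Then let $r\uparrow 1$, exactly as in the proof of Theorem \ref{thm_realization_H}.
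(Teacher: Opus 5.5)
Your proposal is correct and follows the paper's proof essentially verbatim. The paper obtains $(2)\iff(3)\iff(4)\iff(5)$ from Theorem \ref{thm_realization_H}, proves $(1)\Rightarrow(2)$ via $(T_2,T_3,T_4)\in\mathfrak{M}_\E$ for $\underline{T}\in\mathfrak{M}_\Hex$, and proves $(2)\Rightarrow(1)$ via $(0,T_2,T_3,T_4)\in\mathfrak{M}_\Hex$ together with $(g\circ\theta_{\Hex\to\E})(\underline{T})=g(T_2,T_3,T_4)$. You are in fact more careful than the paper: it states the membership as ``$(0,T_2,T_3,T_4)\in\mathfrak{M}_\E$'' (a typo), and it uses both the functional-calculus composition identity and the holomorphy of $g$ without comment.
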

	
	\begin{proof}
		The equivalence of $(2)$ with $(3)-(5)$ follows directly from Theorem \ref{thm_realization_H}. We now prove $(1) \implies (2)$. Let $g \in SA(\E)$ and $\underline{T}=(T_1, T_2, T_3, T_4) \in \mathfrak{M}_\Hex$. It follows from the definition of $\mathfrak{M}_\Hex$ that $(T_2, T_3, T_4) \in \mathfrak{M}_\E$ and so, $\|g\circ \theta_{\Hex \to \E}(\underline{T})\|=\|g(T_2, T_3, T_4)\| \leq 1$. Thus, $g\circ \theta_{\Hex \to \E} \in SA(\Hex)$. It remains to prove $(2) \implies (1)$. Let $g\circ \theta_{\Hex \to \E} \in SA(\Hex)$ and take $(T_2, T_3, T_4) \in \mathfrak{M}_\E$. Evidently, $(0, T_2, T_3, T_4) \in \mathfrak{M}_\E$. Thus, $\|g(T_2, T_3, T_4)\|=\|g\circ \theta_{\Hex \to \E}(0, T_2, T_3, T_4)\| \leq 1$ and so, $g \in SA(\E)$.
	\end{proof}
	
	The following interpolation theorem is obtained as an application of Theorems \ref{thm_interpolation_H} and \ref{thm_realization_H_E}.
	
	\begin{thm}\label{thm_interpolation_H_E}
		Let $F=\{z_1, \dotsc, z_n\} \subseteq \E$ and let $\lm_1, \dotsc, \lm_n \in \DC$. Then the following are equivalent:
		\begin{enumerate}[leftmargin=*]
			\item[$(1)$] there exists $g \in SA(\E)$ such that $g(z_j)=\lm_j$ for $1 \leq j \leq n$;
			\item[$(2)$]there exists $f \in SA(\Hex)$ such that $f(0, z_j)=\lm_j$ for $1 \leq j \leq n$;
			\item[$(3)$] $\begin{bmatrix} (1-\lm_i\overline{\lm_j})k((0, z_i), (0, z_j))\end{bmatrix}_{i, j=1}^n \geq 0$ for all $k \in AK(\Hex)$;
			\item[$(4)$] there exist $\xi \in C(\overline{\D}^2)^+_F, \nabla \in C(\DC)^+_F$ and $\delta \in \C_F^+$ such that
			\begin{align*}
				1-\lambda_i\overline{\lambda}_j
				&=\xi((0, z_i), (0, z_j))(1-E(0, z_i)\overline{E(0, z_j)})\\
				& \quad +\nabla((0, z_i), (0, z_j))(1-e(0, z_i)\overline{e(0, z_j)})+(1-z_i^{(2)}\overline{z}_j^{(2)})\delta((0, z_i), (0, z_j))
			\end{align*}
			for $z_i=(z_i^{(1)}, z_i^{(2)}, z_i^{(3)})$ and $1 \leq i, j \leq n$.
		\end{enumerate}
		In the case when $(1)$ holds, one may choose $f=g\circ \theta_{\Hex \to \E}$ in $(2)$.
	\end{thm}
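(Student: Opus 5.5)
The plan is to chain the equivalences through the hexablock results, treating $(1)\iff(2)$ via Theorem \ref{thm_realization_H_E} and $(2)\iff(3)\iff(4)$ via Theorem \ref{thm_interpolation_H}, applied to the finite set $\widetilde F=\{(0,z_1),\dotsc,(0,z_n)\}\subseteq\Hex$. That $\widetilde F\subseteq\Hex$ comes from the observation that $(z^{(1)},z^{(2)},z^{(3)})\in\E$ if and only if $(0,z^{(1)},z^{(2)},z^{(3)})\in\Hex$. In the first coordinate, note that $E(0,z_i)\equiv 0$, so the $\xi$-term in (4) is simply $\xi((0,z_i),(0,z_j))(1)$, which is harmless.

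For $(1)\implies(2)$, take $g\in SA(\E)$ with $g(z_j)=\lm_j$ and set $f=g\circ\theta_{\Hex\to\E}$. By Theorem \ref{thm_realization_H_E}, $f\in SA(\Hex)$, and $f(0,z_j)=g(z_j)=\lm_j$. This also justifies the final sentence of the statement.

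For $(2)\implies(1)$, given $f\in SA(\Hex)$ with $f(0,z_j)=\lm_j$, define $g(w)=f(0,w)$ on $\E$. This is holomorphic on $\E$ since $(0,w)\in\Hex$. For $(T_2,T_3,T_4)\in\mathfrak{M}_\E$, I will check directly that $(0,T_2,T_3,T_4)\in\mathfrak{M}_\Hex$. The conditions on $T_3$ and $\Psi$ are inherited, and $\psi_{\al_1,\al_2}(0,T_2,T_3,T_4)=0$ because the invertibility of $I-\al_1T_2-\al_2T_3+\al_1\al_2T_4$ was established above for triples in $\mathfrak{M}_\E$. Hence $\|g(T_2,T_3,T_4)\|=\|f(0,T_2,T_3,T_4)\|\le 1$, so $g\in SA(\E)$ and $g(z_j)=\lm_j$.

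For $(2)\iff(3)\iff(4)$, apply Theorem \ref{thm_interpolation_H} to the points $(0,z_j)$ and values $\lm_j$. Condition (3) is then verbatim condition (2) of that theorem. Condition (4) is condition (3) of that theorem, where the last term reads $(1-w_i^{(3)}\overline{w}_j^{(3)})\delta$ with $w_i=(0,z_i)$, so $w_i^{(3)}$ is the second coordinate $z_i^{(2)}$ of $z_i$, matching the statement. The kernels are indexed by $\widetilde F$ rather than $F$, but this is only relabeling via the bijection $z_i\mapsto(0,z_i)$.

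The only step needing care is confirming $(0,T_2,T_3,T_4)\in\mathfrak{M}_\Hex$, in particular the invertibility making $\psi_{\al_1,\al_2}$ well-defined. That was already verified via the spectral inclusion $\sigma_T\subseteq\E$, so I expect no genuine obstacle.
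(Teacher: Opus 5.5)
Your proposal is correct and follows the paper's proof essentially verbatim. It proves $(1)\Rightarrow(2)$ via Theorem \ref{thm_realization_H_E} with $f=g\circ\theta_{\Hex\to\E}$, $(2)\Leftrightarrow(3)\Leftrightarrow(4)$ via Theorem \ref{thm_interpolation_H} at the points $(0,z_j)\in\Hex$, and $(2)\Rightarrow(1)$ by setting $g(w)=f(0,w)$ and using $(0,T_2,T_3,T_4)\in\mathfrak{M}_\Hex$ — where your explicit check that $\psi_{\al_1,\al_2}(0,T_2,T_3,T_4)=0$ (via invertibility from $\sigma_T(T_2,T_3,T_4)\subseteq\E$) supplies a detail the paper only asserts.
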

	
	\begin{proof}
		The part $(1) \implies (2)$ follows from Theorem \ref{thm_realization_H_E} by choosing $f=g\circ \theta_{\Hex \to \E}$. The equivalence of $(2)$ with $(3)-(4)$ follows from Theorem \ref{thm_interpolation_H}. It remains to show that $(2) \implies (1)$. To see this, let $f \in SA(\Hex)$ such that $f(0, z_j)=\lm_j$ for $1 \leq j \leq n$. Define $g: \E \to \C$ as $g(z^{(2)}, z^{(3)}, z^{(4)})=f(0, z^{(2)}, z^{(3)}, z^{(4)})$. For $(T_2, T_3, T_4) \in \mathfrak{M}_\E$, note that  $(0, T_2, T_3, T_4) \in \mathfrak{M}_\Hex$ and so, $\|g(T_2, T_3, T_4)\|=\|f(0, T_2, T_3, T_4)\| \leq 1$. Thus, $g \in SA(\E)$ and $g(z_j)=\lambda_j$ for $ 1\leq j \leq n$.  
	\end{proof}
	
	We now turn our attention to the extension problem on $\E$. As mentioned earlier, the domain $\E$ can embedded inside $\Hex$ as the subset $\{0\} \times \E$. The following example shows that $\{0\} \times \E$ admits the extension property in $SA(\Hex)$.	
	
	\begin{eg}\label{eg_001}
		Consider the subset $W=\{0\} \times \E$ of $\Hex$. Let $f \in \mathscr{HE}(W)$ be non-zero. Consider the maps $g: \Hex \to \C$ and $h: \E \to \C$ given by
		\[
		g(w^{(1)}, w^{(2)}, w^{(3)}, w^{(4)})=f(0, w^{(2)}, w^{(3)}, w^{(4)}) \quad \text{and} \quad h(w^{(2)}, w^{(3)}, w^{(4)})=f(0, w^{(2)}, w^{(3)}, w^{(4)}).
		\]
		Since $f \in \mathscr{HE}(W)$, it extends to a holomorphic function $F$ on a neighbourhood of $W$. Thus, $g(w^{(1)},w^{(2)},w^{(3)}, w^{(4)})=F(0,w^{(2)},w^{(3)}, w^{(4)})$ and $h(w^{(2)},w^{(3)}, w^{(4)})=F(0,w^{(2)},w^{(3)}, w^{(4)})$ are compositions of $F$ with holomorphic coordinate maps, and hence are holomorphic. Also, $g|_W=f$ and $\|g\|_{\infty, \Hex}=\|f\|_{\infty, W}$. Let $\underline{T}=(T_1, T_2, T_3, T_4) \in \mathfrak{M}_\Hex$. Then $(T_2, T_3, T_4) \in \mathfrak{M}_{\E}$. For $r \in (0, 1)$, set $h_r(w^{(2)}, w^{(3)}, w^{(4)})=h(rw^{(2)}, rw^{(3)}, r^2w^{(4)})$ for $(w^{(2)}, w^{(3)}, w^{(4)})\in \overline{\E}$. Clearly, $h_r \in \text{Hol}(\overline{\E})$. Then
		\begin{align*}
			\|g(rT_1, rT_2, rT_3, r^2T_3)\|=\|h(rT_2, rT_3, r^2T_3)\| =\|h_r(T_2, T_3, T_4)\|\leq \|h_r\|_{\infty, \overline{\E}} \leq \|h\|_{\infty, \E} \leq \|f\|_{\infty, W}
		\end{align*}
		for every $r \in (0, 1)$. Letting $r\to 1$, we have that $\|g(\underline{T})\| \leq \|f\|_{\infty, W}$ and thus, $\frac{1}{\|f\|_{\infty, W}}g \in SA(\Hex)$. Hence, $W$ admits the extension property in $SA(\Hex)$. \qed 
	\end{eg}	
	
	Let $Q\E$ be the class of commuting operators $(T_2, T_3, T_4)$ with $\sigma_T(T_2, T_3, T_4) \subseteq \E$ such that
	\[
	\|T_3\|\leq 1 \quad \text{and} \quad \|\Psi(\al, T_2, T_3, T_4)\| \leq 1
	\]
	for all $\al \in \DC$. Evidently, $(T_2, T_3, T_4) \in Q\E$ if and only if $(0, T_2, T_3, T_4)$ is in $Q\Hex$. A commuting triple $(T_2, T_3, T_4) \in Q\E$ is said to be \textit{subordinate} to $V$ if $\sigma_T(T_2, T_3, T_3) \subset V$, and $g(T_2, T_3, T_4)=0$ whenever $g$ is holomorphic in a neighbourhood of $V$ and $g|_V=0$. If $f$ is a function on $V$ that admits a holomorphic extension in a neighbourhood of $V$ and $(T_2, T_3, T_4)$ is subordinate to $V$, then we set $f(T_2, T_3, T_4)=g(T_2, T_3, T_4)$, where $g$ is any holomorphic extension of $f$ in a neighbourhood of $W$. The authors of \cite{Jain} established the following extension theorem on $\E$. Below, we provide an alternative proof here based on Theorem \ref{thm_ext_H}.	
	
	\begin{thm}\label{thm_ext_H_E}
		Let $V \subseteq \E$ and let $h \in \mathscr{HE}(V)$. Then there is a bounded holomorphic function $\widetilde{h}$ on $\E$ such that $\widetilde{h}|_V=h$ and $\|h\|_{\infty, V}=\|\widetilde{h}\|_{\infty, \E}$ if and only if $\|h(A, S, P)\| \leq \|h\|_{\infty, V}$ for every $(A, S, P) \in Q\E$ subordinate to $V$.
	\end{thm}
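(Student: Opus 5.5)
I will mirror the proof of Theorem \ref{thm_ext_H_B2}, replacing the embedding $\B_2 \hookrightarrow \Hex$ by $\E \hookrightarrow \Hex$, $(z^{(2)}, z^{(3)}, z^{(4)}) \mapsto (0, z^{(2)}, z^{(3)}, z^{(4)})$, and replacing Example \ref{eg_002} by Example \ref{eg_001}. Set
\[
W=\{0\}\times V \quad \text{and} \quad W_0=\{0\}\times \E,
\]
so that $W \subseteq W_0 \subseteq \Hex$. Define $f$ on $W$ by $f(0,z)=h(z)$. If $h_0$ is a holomorphic extension of $h$ to a neighbourhood $U$ of $V$, then $F(z^{(1)},z)=h_0(z)$ is holomorphic on $\C\times U$, which is a neighbourhood of $W$. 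Hence $f \in \mathscr{HE}(W)$ and $\|f\|_{\infty,W}=\|h\|_{\infty,V}$.

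\emph{Forward direction.} Given $\widetilde h$, set $\widetilde f(z^{(1)},z)=\widetilde h(z)$. This lies in $\mathscr{HE}(W_0)$ because it is the restriction of a function holomorphic on $\Hex$. By Example \ref{eg_001}, $W_0$ has the extension property. Applying the necessity part of Theorem \ref{thm_ext_H} to that extension gives
\[
\|\widetilde f(\underline T)\|\le \|\widetilde h\|_{\infty,\E}=\|h\|_{\infty,V}
\]
for every $\underline T\in Q\Hex$ subordinate to $W_0$. Now let $(A,S,P)\in Q\E$ be subordinate to $V$. Then $(0,A,S,P)\in Q\Hex$, and $\sigma_T(0,A,S,P)=\{0\}\times\sigma_T(A,S,P)\subseteq W$. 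For subordination, take $G$ holomorphic near $W$ with $G|_W=0$, and put $g(z)=G(0,z)$. Then $g$ is holomorphic near $V$ and vanishes on $V$. By the functional calculus $G(0,A,S,P)=g(A,S,P)=0$. So $(0,A,S,P)$ is subordinate to $W$, hence to $W_0$, and
\[
\|h(A,S,P)\|=\|\widetilde f(0,A,S,P)\|\le\|h\|_{\infty,V}.
\]

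\emph{Converse.} Let $\underline T=(T_1,T_2,T_3,T_4)\in Q\Hex$ be subordinate to $W$. The coordinate function $z^{(1)}$ vanishes on $W$, so subordination forces $T_1=0$. Since $(0,T_2,T_3,T_4)\in Q\Hex$ is equivalent to $(T_2,T_3,T_4)\in Q\E$, we get $(T_2,T_3,T_4)\in Q\E$. Its spectrum lies in $V$ by the projection property. It is subordinate to $V$: any $g_0$ vanishing on $V$ lifts to $G_0(z^{(1)},z)=g_0(z)$, which vanishes on $W$, so $g_0(T_2,T_3,T_4)=G_0(\underline T)=0$. By hypothesis,
\[
\|f(\underline T)\|=\|h(T_2,T_3,T_4)\|\le \|f\|_{\infty,W}.
\]
Theorem \ref{thm_ext_H} then yields $\widehat F\in H^\infty(\Hex)$ with $\widehat F|_W=f$ and $\|\widehat F\|_{\infty,\Hex}=\|f\|_{\infty,W}$. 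Define $\widetilde h(z)=\widehat F(0,z)$; this is holomorphic on $\E$ since $\{0\}\times\E\subseteq\Hex$. It extends $h$, and
\[
\|\widetilde h\|_{\infty,\E}\le\|\widehat F\|_{\infty,\Hex}=\|h\|_{\infty,V}\le \|\widetilde h\|_{\infty,\E},
\]
so equality holds.

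The step needing most care is the subordination bookkeeping, namely checking that the functional calculus commutes with the slice embedding: $G(0,A,S,P)=g(A,S,P)$. This follows from the composition rule for the Taylor functional calculus, exactly as in the proof of Theorem \ref{thm_ext_H_B2}. The case $h=0$ is trivial (take $\widetilde h=0$), so we may assume $h\ne 0$, as required to invoke Theorem \ref{thm_ext_H}.
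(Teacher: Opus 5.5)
Your proposal is correct and follows essentially the same route as the paper. You embed $V$ as $W=\{0\}\times V\subseteq W_0=\{0\}\times\E\subseteq\Hex$. For the forward direction you use Example~\ref{eg_001} together with the necessity half of Theorem~\ref{thm_ext_H}. For the converse you force $T_1=0$ via the coordinate function $z^{(1)}$, transfer subordination to $V$, and restrict the extension $\widehat F$ from Theorem~\ref{thm_ext_H} to the slice $\{0\}\times\E$. Your explicit treatment of the case $h=0$, which is needed because Theorem~\ref{thm_ext_H} is stated only for non-zero functions, is a small improvement over the paper's write-up.
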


	\begin{proof} 
		Set $W = \{0\} \times V$ and $W_0=\{0\} \times \E$. Clearly, $W \subseteq W_0 \subseteq \Hex$. Define $f: W \to \C$ as $f(0, z^{(2)}, z^{(3)})=h(z^{(2)}, z^{(3)})$ for all $(z^{(2)}, z^{(3)}) \in V$.  Since $h \in \mathscr{HE}(V)$, it extends to a holomorphic map $h_0$ on a neighbourhood $U$ of $V$. Then the map $F(z^{(1)},z^{(2)},z^{(3)}, z^{(4)}) = h_0(z^{(2)},z^{(3)}, z^{(4)})$ is a holomorphic extension of $f$ to $\mathbb{C} \times U$ such that $F|_W=f$ and so, $f \in \mathscr{HE}(W)$. 
		
		\smallskip 
		
		We first prove the forward part. Suppose there exists a bounded holomorphic function $\widetilde{h}$ on $\E$ such that $\widetilde{h}|_V=h$ and $\|h\|_{\infty, V}=\|\widetilde{h}\|_{\infty, \E}$. The map $\widetilde{f}: W_0 \to \C$ given by $\widetilde{f}(0, z^{(2)}, z^{(3)}, z^{(4)})=\widetilde{h}(z^{(2)}, z^{(3)},z^{(4)})$ has a holomorphic extension on $\Hex$, because $\widetilde{F}(z^{(1)}, z^{(2)}, z^{(3)}, z^{(4)})= \widetilde{h}(z^{(2)}, z^{(3)}, z^{(4)})$ is holomorphic on $\Hex$ that satisfies $\widetilde{F}|_{W_0}=\widetilde{f}$. Thus, $\widetilde{f} \in \mathscr{HE}(W_0)$. By Example \ref{eg_001}, $W_0=\{0\} \times \E$ has the extension property in $SA(\Hex)$. Then 
		\begin{equation}\label{eqn_4004}
			\|\widetilde{f}(\underline{T})\| \leq \|\widetilde{f}\|_{\infty, W_0}\leq \|\widetilde{h}\|_{\infty, \E}=\|h\|_{\infty, V}  
		\end{equation}
		for every $\underline{T} \in Q\Hex$ subordinate to $W_0$. Suppose $(A, S,P) \in Q\E$ is subordinate to $V$. Note that $\sigma_T(0, A, S, P) = \{0\} \times \sigma_T(A, S, P) \subseteq \{0\} \times V = W \subseteq \Hex$ and $(0, A, S, P) \in Q\Hex$. Let $G$ be holomorphic on an open neighbourhood $U \subseteq \mathbb{C}^4$ of $W$ such that $G|_W = 0$. Define $\xi : \mathbb{C}^3 \to \mathbb{C}^4$ as $\xi(z^{(2)}, z^{(3)}, z^{(4)}) = (0, z^{(2)}, z^{(3)}, z^{(4)})$, and $g(z^{(2)}, z^{(3)}, z^{(4)}) = G(0, z^{(2)}, z^{(3)}, z^{(4)})$ for $(z^{(2)}, z^{(3)}, z^{(4)})$ in $\xi^{-1}(U)$. Clearly, $g$ is holomorphic in a neighbourhood of $V$ and $g|_V=0$. Since $(A, S,P)$ is subordinate to $V$ and $g|_V=0$, it follows that
		$g(A, S, P) = 0$. By holomorphic functional calculus, $G(0,A, S,P) = g(A, S,P) = 0$.
		Therefore, $(0,A, S,P)$ is subordinate to $W$. Since $W \subseteq W_0$, $(0,A, S,P)$ is subordinate to $W_0$.  By \eqref{eqn_4004}, 
		\[
		\|h(A, S, P)\|=\|\widetilde{h}(A, S, P)\|=\|\widetilde{F}(0, A, S, P)\|=\|\widetilde{f}(0, A, S, P)\| \leq \|h\|_{\infty, V}
		\]
		for every $(A, S,P)\in Q\E$ subordinate to $V$.
		
		\smallskip 
		
		Conversely, let $\|h(T_2, T_3, T_4)\| \le \|h\|_{\infty,V}$ for every $(T_2, T_3, T_4) \in Q\E$ subordinate to $V$. Let $\underline{T} = (T_1,T_2,T_3, T_4) \in Q\Hex$ be subordinate to $W$.  Clearly, $(T_2, T_3, T_4) \in Q\E$. Moreover, $\sigma_T(\underline{T}) \subseteq W = \{0\} \times V$.
		Let $G(z^{(1)},z^{(2)},z^{(3)}, z^{(4)}) = z^{(1)}$. Clearly, $G$ is holomorphic on $\C^4$ and $G|_W = 0$. Since $\underline{T}$ is subordinate to $W$, it follows that $T_1 = G(T_1,T_2,T_3, T_4) = 0$ and so, $\underline{T} = (0,T_2,T_3, T_4)$. Next, we show that $(T_2, T_3, T_4)$ is subordinate to $V$. By spectral mapping principle, $\sigma_T(T_2, T_3, T_4) \subseteq V$. Suppose $g_0$ is holomorphic in a neighbourhood $U$ of $V$ with $g_0|_V=0$. Then the function $G_0(z^{(1)},z^{(2)},z^{(3)}, z^{(4)})= g_0(z^{(2)},z^{(3)}, z^{(4)})$ is holomorphic on $\mathbb{C} \times U$, which is a neighbourhood of $W$ and $G_0|_W=0$. Since $\underline{T}$ is subordinate to $W$, we have that $g_0(T_2,T_3, T_4) = G_0(0,T_2,T_3, T_4) = G_0(T_1,T_2,T_3, T_4) = 0$.
		Hence, $(T_2,T_3, T_4)$ is subordinate to $V$. By hypothesis,
		\[
		\|f(\underline{T})\| = \|f(0,T_2,T_3, T_4)\| = \|h(T_2, T_3, T_4)\| \leq \|h\|_{\infty,V} = \|f\|_{\infty,W}.
		\]
		By Theorem \ref{thm_ext_H}, there exists $\widehat{F} \in H^\infty(\Hex)$ such that $\widehat{F}|_W = f$ and $\|\widehat{F}\|_{\infty,\Hex} = \|f\|_{\infty,W}$. Define $\widehat{h} : \E \to \C$ by $\widehat{h}(z^{(2)},z^{(3)}, z^{(4)})= \widehat{F}(0,z^{(2)},z^{(3)},z^{(4)})$, which is holomorphic on $\E$. For $(z^{(2)},z^{(3)}, z^{(4)}) \in V$, we have that $\widehat{h}(z^{(2)},z^{(3)}, z^{(4)}) = \widehat{F}(0,z^{(2)},z^{(3)}, z^{(4)}) = f(0,z^{(2)},z^{(3)}, z^{(4)})= h(z^{(2)},z^{(3)}, z^{(4)})$ and thus, $\widehat{h}|_V = h$. Furthermore,
		$\|\widehat{h}\|_{\infty,\E} \le \|\widehat{F}\|_{\infty,\Hex} = \|f\|_{\infty,W} = \|h\|_{\infty,V}$ and so, $\|\widehat{h}\|_{\infty,\E} = \|h\|_{\infty,V}$.
	\end{proof}	
	
For given $\varphi_1,\dotsc,\varphi_d\in H^\infty(\E)$, the authors of \cite{Jain} presented necessary and sufficient conditions for the existence of $f_1, \dotsc, f_d\in H^\infty(\E)$ such that $\varphi_1f_1+\dotsc +\varphi_df_d=1$ with $f_1, \dotsc, f_d$ subjected to a certain norm bound condition. This is referred to as the Toeplitz corona theorem for $\E$. To do so, they introduced the vector-valued Schur-Agler class $SA_\E(\LS_1,\LS_2)$ that consists of all holomorphic functions
$
g:\E\to\mathcal{B}(\LS_1,\LS_2)
$
such that $\|g(T_2,T_3,T_4)\|\leq 1$ for every $(T_2,T_3,T_4)\in\mathfrak{M}_\E$. One verifies by arguments similar to those in the proof of $(1)\iff(2)$ of Theorem \ref{thm_realization_H_E} that a function $g\in SA_\E(\LS_1,\LS_2)$ if and only if 
$g\circ\theta_{\Hex\to\E}\in SA_\Hex(\LS_1,\LS_2)$. Hence, Theorem \ref{thm_TC_H_II} gives the following equivalent formulation of the Toeplitz corona theorem for $\E$, where the characterizations are formulated in terms of admissible kernels on $\Hex$. The details are left to the reader.
	
\begin{thm}\label{thm_TC_E}
	For $\varphi_1, \dotsc, \varphi_d \in H^\infty(\E)$ and $\epsilon>0$, the following statements are equivalent:
	\begin{enumerate}[leftmargin=*]
		\item[$(1)$] there exist $f_1, \dotsc, f_d \in H^\infty(\E)$ such that  
		$
		\begin{bmatrix}
			f_1 & \dotsc & f_d
		\end{bmatrix}^t \in \frac{1}{\epsilon}SA_\E(\C, \C^d)
		$ 
		and $\overset{d}{\underset{j=1}{\sum}}\varphi_jf_j=1$;
		
		\item[$(2)$] the map
		\[
		(z, w) \mapsto \left(\overset{d}{\underset{j=1}{\sum}} \varphi_j \circ \theta_{\Hex \to \E}(z)\overline{\varphi_j\circ \theta_{\Hex \to \E}(w)}-\epsilon^2\right)k(z, w)
		\] 
		on $\Hex \times \Hex$ is positive semi-definite for every $k \in AK(\Hex)$;
		
		\item[$(3)$] there exist $\xi \in C(\overline{\D}^2)^+_\Hex, \nabla \in C(\DC)^+_\Hex$ and $\delta \in \C_\Hex^+$ such that for all $z, w \in \Hex$,
		\begin{align*}
		& \overset{d}{\underset{j=1}{\sum}} \varphi_j \circ \theta_{\Hex \to \E}(z)\overline{\varphi_j \circ \theta_{\Hex \to \E}(w)}-\epsilon^2\\
		& =\xi(z, w)(1-E(z)\overline{E(w)})+\nabla(z, w)(1-e(z)\overline{e(w)})+(1-z^{(3)}\overline{w}^{(3)})\delta(z, w),
		\end{align*}
		where $E(z)$ and $e(z)$ are as in \eqref{eqn_E(z)}.
	\end{enumerate}
\end{thm}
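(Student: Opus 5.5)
The plan is to reduce Theorem \ref{thm_TC_E} to Theorem \ref{thm_TC_H_II} through the projection $\theta_{\Hex \to \E}$, in the same way Theorem \ref{thm_realization_H_E} was deduced from Theorem \ref{thm_realization_H}. Set $\Phi_j=\varphi_j\circ\theta_{\Hex\to\E}\in H^\infty(\Hex)$ for $1\le j\le d$. Conditions $(2)$ and $(3)$ of Theorem \ref{thm_TC_E} are then literally conditions $(2)$ and $(3)$ of Theorem \ref{thm_TC_H_II} for the data $\Phi_1,\dotsc,\Phi_d$ and $\epsilon$. Hence $(2)\iff(3)$ holds, and both are equivalent to the existence of $F_1,\dotsc,F_d\in H^\infty(\Hex)$ with $[F_1\ \cdots\ F_d]^t\in\frac1\epsilon SA_\Hex(\C,\C^d)$ and $\sum_j\Phi_jF_j=1$. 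It remains to show that this last statement is equivalent to $(1)$.

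First I will record the vector-valued version of $(1)\iff(2)$ in Theorem \ref{thm_realization_H_E}: for $g:\E\to\mathcal B(\LS_1,\LS_2)$ holomorphic, $g\in SA_\E(\LS_1,\LS_2)$ if and only if $g\circ\theta_{\Hex\to\E}\in SA_\Hex(\LS_1,\LS_2)$. For the forward direction, take $\underline T\in\mathfrak M_\Hex$. Then $(T_2,T_3,T_4)\in\mathfrak M_\E$, and the functional calculus gives $(g\circ\theta)(\underline T)=g(T_2,T_3,T_4)$. For the converse, take $(T_2,T_3,T_4)\in\mathfrak M_\E$. The first condition of $\mathfrak M_\Hex$ and the $\Psi$-condition are unchanged, and $\psi_{\al_1,\al_2}(0,T_2,T_3,T_4)=0$, so $(0,T_2,T_3,T_4)\in\mathfrak M_\Hex$. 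The spectral mapping and compatibility of the holomorphic functional calculus with the coordinate projection, via the description in \cite{Esch_Put}, justify these identifications of $g(\underline T)$.

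$(1)\Rightarrow$ Hexablock statement: given $f_j$ as in $(1)$, put $F_j=f_j\circ\theta_{\Hex\to\E}$. By the lemma above, $\epsilon[F_1\ \cdots\ F_d]^t\in SA_\Hex(\C,\C^d)$, and $\sum\Phi_jF_j=(\sum\varphi_jf_j)\circ\theta=1$.

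Hexablock statement $\Rightarrow(1)$: this is the step needing care, since the $F_j$ produced by Theorem \ref{thm_TC_H_II} need not factor through $\theta$. I will restrict to the slice $\{0\}\times\E\subseteq\Hex$ and set $f_j(w)=F_j(0,w)$ for $w\in\E$; these are holomorphic and bounded on $\E$. Then $\sum_j\varphi_j(w)f_j(w)=\sum_j\Phi_j(0,w)F_j(0,w)=1$. For the norm condition, for $(T_2,T_3,T_4)\in\mathfrak M_\E$ we have $(0,T_2,T_3,T_4)\in\mathfrak M_\Hex$, so
\[
\|[f_1\ \cdots\ f_d]^t(T_2,T_3,T_4)\|=\|[F_1\ \cdots\ F_d]^t(0,T_2,T_3,T_4)\|\le 1/\epsilon,
\]
exactly as in the proof of $(2)\Rightarrow(1)$ of Theorem \ref{thm_interpolation_H_E}.

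The main obstacle I anticipate is justifying, for operator tuples, that the functional calculus commutes with restriction to the slice and with composition by $\theta$. That is, $F(0,T_2,T_3,T_4)=(F\circ\iota)(T_2,T_3,T_4)$ where $\iota(w)=(0,w)$, and similarly $(g\circ\theta)(\underline T)=g(T_2,T_3,T_4)$. I would obtain both from the spectral mapping theorem together with the uniqueness and continuity of the Taylor functional calculus under holomorphic maps between neighbourhoods of the spectra, applied entrywise to the vector-valued functions through the scalar reduction $g_{x,y}$.
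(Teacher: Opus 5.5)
Your proposal is correct and takes the same route as the paper. The paper applies Theorem \ref{thm_TC_H_II} to $\varphi_j\circ\theta_{\Hex\to\E}$, using the vector-valued equivalence $g\in SA_\E(\LS_1,\LS_2)\iff g\circ\theta_{\Hex\to\E}\in SA_\Hex(\LS_1,\LS_2)$, and leaves the details to the reader. Your restriction $f_j(w)=F_j(0,w)$ to the slice $\{0\}\times\E$, justified by $(0,T_2,T_3,T_4)\in\mathfrak{M}_\Hex$ whenever $(T_2,T_3,T_4)\in\mathfrak{M}_\E$, supplies the one detail that equivalence does not cover: the hexablock solution $F_j$ need not factor through $\theta_{\Hex\to\E}$.
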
		
	
\noindent \textbf{Concluding remark.} The present work identifies the hexablock as a unifying domain for realization, interpolation, extension and Toeplitz corona theorems. In particular, the Schur-Agler class introduced here not only establishes these results for the hexablock itself but also leads to analogous results for the Euclidean unit ball in $\C^2$ and recovers the corresponding theory for the tetrablock. We expect that this viewpoint will be useful in the investigation of function-theoretic and operator-theoretic questions on other domains associated with structured singular values.
	
	\vspace{0.3cm}

	\noindent \textbf{Funding.} The first named author is supported in part by the “Core Research Grant” with Award No. CRG/2023/005223 from Anusandhan National Research Foundation (ANRF) of Govt. of India. During the course of this work, the second named author is supported via the IIT Bombay RDF Grant of the first named author with Project Code RI/0115-10001427. At present, the second named author is supported by a Postdoctoral Fellowship (Ref. No. 0204/9(10)/2026-R\&D-II/4805) awarded by the National Board for Higher Mathematics (NBHM), Government of India.

\end{document}